\newtheorem{theorem}{Theorem}
\newtheorem{corollary}[theorem]{Corollary}
\newtheorem{definition}{Definition}
\newtheorem{remark}{Remark}
\newtheorem{lemma}{Lemma}
\newcommand{\thesistitle}{Quenched Large Deviations for Multidimensional Random Walk in Random Environment: A Variational Formula}
\newcommand{\myname}{Jeffrey M. Rosenbluth}
\newcommand{\advisor}{S.R.S. Varadhan}
\newcommand{\thesismonth}{January,}
\newcommand{\thesisyear}{2006}
\newcommand{\esup}{\operatornamewithlimits{ess\, sup}}
\newcommand{\wlim}{\operatornamewithlimits{w-lim}}
\newcommand{\lb}{\left \langle}
\newcommand{\rb}{\right\rangle}
\newcommand{\mpt}{T}
\newcommand{\admiss}{\mathcal{K}}
\newcommand{\unit}{e}
\newcommand{\base}{\mathrm{e}}
\begin{document}

\linespread{1.6}   %double spaced
\normalsize        %set to normalsize so double spacing kicks in

\pagenumbering{roman}  % front matter number with roman

%%-------------------------------------Title page (unnumbered)
\thispagestyle{empty}
\vspace*{0.5cm}
\begin{center}
\Large\bfseries 
\thesistitle
\end{center}

\begin{center}
\vspace{1cm}
by
\vspace{1cm}

{\it \large \myname}\\

\vspace*{4cm}

\noindent
A dissertation submitted in partial fulfillment\\
of the requirements for the degree of \\
Doctor of Philosophy\\
Department of Mathematics\\
New York University\\
\thesismonth~\thesisyear\\
\end{center}

\vspace{1cm}

\begin{flushright}
   \makebox[2in]{\hrulefill}\\
   \vspace{-.2cm}
   \parbox[t]{1.8in}{\advisor}\\
\end{flushright}
\newpage

%%-------------------------------------Copyright  (unnumbered)
%%                                  (only if choose to copyright)
\thispagestyle{empty}
\vspace*{2in} 
\begin{center}
\copyright \hspace{.2cm} \myname\\
All Rights Reserved, \thesisyear
\end{center}
\newpage
%\thispagestyle{empty} \hbox{   } \newpage

%%-------------------------------------BLANK (unnumbered)
%\thispagestyle{empty} \hbox{   } \newpage

%%-------------------------------------Frontispiece (optional, unnumbered)
%\include{frontispiece}

%%-------------------------------------Dedication, (first numbered)
%\pagestyle{plain}
%\vspace*{3in} 
%\begin{center}
%{\large Dedicated to ...}
%\end{center}

%\addcontentsline{toc}{chapter}{Dedication}
%\hbox{   } \newpage

%-------------------------------------Acknowledgements
\chapter*{Acknowledgements}
\addcontentsline{toc}{chapter}{Acknowledgements}

I would like to express my deepest gratitude to my advisor Professor Srinivasa R.S. Varadhan for superb guidance, understanding, patience, and generosity. His profound insight and brilliant ideas have been essential to this project. I would also like to thank the faculty of The Courant Institute for teaching me mathematics.
\newpage

%%-------------------------------------Preface
%\include{preface}
%\newpage

%%-------------------------------------Abstract

%%%%%%%%%%%%%%%%%%%%%%%%%%%%%%%%%%%%%%%%%%%%%%%%%%%%%%%%%%%%%%% %
%%% uncomment out the following when printing the abstract alone
%%%%%%%%%%%%%%%%%%%%%%%%%%%%%%%%%%%%%%%%%%%%%%%%%%%%%%%%%%%%%%%%
%\begin{center}
%{\bf \thesistitle}\\Author: \myname\\ Advisor: \advisor\\ 
%\bigskip
%{\bf\Large Abstract}\\
%\end{center}
%\medskip
%\thispagestyle{empty}
\chapter*{Abstract}
\addcontentsline{toc}{chapter}{Abstract}

We take the point of view of the particle in a multidimensional nearest neighbor random walk in  random environment (RWRE). We prove a quenched large deviation principle and derive a variational formula for the quenched rate function. Most of the previous results in this area rely on the subbadditive ergodic theorem. We employ a different technique which is based on a minimax theorem. Large deviation principles for RWRE have been proven for i.i.d. nestling environments subject to a moment condition and for ergodic uniformly elliptic environments. We assume only that the environment is ergodic and the transition probabilities satisfy a moment condition.             
\newpage

%%-------------------------------------TOC
\tableofcontents
\newpage

%%-------------------------------------LOF
%\addcontentsline{toc}{chapter}{List of Figures}
%\listoffigures
%\newpage

%%-------------------------------------LOT
%\addcontentsline{toc}{chapter}{List of Tables}
%\listoftables
%\newpage

%%-------------------------------------List of appendices
%%                             (only use if more than one appendix)
%\listofappendices
%\newpage

%%-------------------------------------Thesis body

\pagenumbering{arabic}   %start arabic numbering

\chapter{Introduction}
\section{The Model}
The random walk in a random environment (RWRE) is usually described as a time homogeneous Markov chain (random walk) whose transition probabilities depend on a randomly chosen environment. One can then define an auxiliary Markov chain on the space of environments; this is commonly called ``the environment viewed from the particle''. Here we will take the reverse (but mathematically equivalent) approach and begin by defining a Markov chain on a suitably chosen space of environments, and then consider the ``shadow'' Markov chain on the space $\mathbb{Z}^d$.

We model the environment with a probability space and an ergodic family of commuting measure preserving transformations $(\Omega,\mathcal{F},\mathbb{P},\mpt_\unit)$, $\unit \in U$ and $U=\{\unit : \unit \in \mathbb{Z}^d,  |\unit|=1\}$. By an ergodic family we mean that any set that is invariant under $\emph{all}$  of the $\{T_e\}_{e \in U}$ has measure zero or one. This is less restrictive than the assumption that each $T_e$ is ergodic. The space $\Omega$ is usually taken to be the space of maps $\omega: \mathbb{Z}^d \times U \mapsto [0,1]$, such that $\sum_{\unit \in U} \omega (z ,\unit)=1$ for all $z \in \mathbb{Z}^d$. Here we place no such restriction on $\Omega$, but instead consider the more general case where $\Omega$ is an arbitrary space and $\mathcal{F}$ is a countably generated $\sigma$-algebra. The transformations $\mpt_\unit$ and $\mpt_{ -\unit}$ are inverses of each other. We are also given a map $p:\Omega \times U \rightarrow [0,1]$ such that for all $\omega$ we have $\sum_{e \in U} p(\omega,e)=1$. We construct a transition function, also called $p$, by defining $p(\omega,\mpt_e \omega)=p(\omega,e)$. We fix $\omega$ and consider the Markov chain $\{\overline{\omega}_n\}$ with transition function $p(\omega, \mpt_\unit \omega)$, state space $\Omega$ and induced measure $P_\omega(\overline{\omega}_0=\omega)=1$. Under $P_\omega$ we associate a shadow markov chain $\{X_n\}$ in $\mathbb{Z}^d$ starting at 0, that moves one step in the direction $\unit$ according to which $\mpt_\unit$  is chosen. The requirement $\unit \in U$ makes the RWRE a nearest neighbor random walk. Our main results are a large deviation principle and a variational formula for its rate function for the quenched random walk in random environment.

\section{Notation}
We list some of the notations that will be used throughout.  We denote by $\unit_i$ the vector in $U$ with a 1 in the $i$th coordinate and zeros elsewhere. Since we often need to exponentiate functions of the vectors $e \in U$, we denote the base of the natural logarithm by the roman typeface $\mathrm{e}$ (e.g., we wirte $\mathrm{e}^x$ for $\exp(x)$). For integer vectors $x$ we denote the transition probability on the shadow Markov chain $\{X_n\}$ again by $p$ and write $p(x,x + \unit)=p(\mpt_x\omega,\mpt_{x +\unit} \omega)$, where $\mpt_x$ is the obvious generalization of the transformation $\mpt$ by the vector $x \in \mathbb{Z}^d$. Similarly we will write $F(x,x+ \unit)$ for $F(T_x \omega ,\unit)$. For a function $h(\omega)$ we define the operator $T_\unit h(\omega)=h(T_\unit \omega)$. For $x \in \mathbb{Z}^d$, we take $|x|$ to be the $\ell_1$ norm, that is $|x|=|x_1|+\cdots + |x_d|$. We use $\mathbb{E}$ for expectation with respect to $\mathbb{P}$. The quantities $c,c_0, c_1,  \ldots$ are positive constants and we note that constants may change value from one line to the next. The closed $L^1$ ball centered at $a$ with radius $r$, that is $\{x \in \mathbb{R}^d: |x-a| \leq r\}$ is denoted by $B_r(a)$ and when $a=0$ simply by $B_r$.

\section{Previous Results}
Previous results concerning large deviations for the nearest neighbor RWRE typically rely on the subadditive egodic theorem. In our proofs we use only the multivariate ergodic theorem and rely more heavily on the minimax theorem of Ky Fan \cite{kyfan:53}. 

The first quenched large deviation principle for a multidimensional RWRE is due to Zerner \cite{zerner:98}.  He assumes that the environment is not only ergodic but is i.i.d., that is $\{p(x,x+ \unit)\}_x$ is an i.i.d. family of random variables. He also assumes (as we do) that the transition probabilities satisfy the moment condition,
$$
\int_\Omega (- \log p(\omega ,\unit))^d < \infty
$$
The most limiting restriction of Zerner's result however, is that he proves a large deviation principle only for so called ``Nestling Environments''.
\begin{definition}
A random environment and it's transition function are said to have the nestling property if the convex hull of the support of the law of
$$
\sum_{\unit \in U} p(\omega ,\unit )\unit
$$
contains the origin.
\end{definition}

More recently Varadhan \cite{varad:03}  considers ergodic environments and dispenses with the nestling assumption. He proves both quenched and annealed large deviation principles. He does however restrict the transition functions to be uniformly elliptic, that is
$$
0< a \leq p(\omega ,\unit) \leq b <1
$$ 
with $\mathbb{P}$ probability 1.

We will consider ergodic environments that satisfy a slightly stronger moment condition than that of Zerner, namely
$$
\int_\Omega |\log p(\omega ,\unit)|^{d+\alpha} < \infty
$$
for some $\alpha >0$.

%-----------------------------------------------------------------------------------------------------------------------------------
\section{Results}
The basis for our results is the existence of the logarithmic moment generating function $ \lim _{n \rightarrow \infty} \frac{1}{n} \log E^{P_\omega}\left [\base^{ \lb \lambda, X_n \rb}  \right]$. We then use this to derive the large deviation principle for the RWRE. We define first the class of functions required in the variational formula for the rate function. We will denote by $\admiss$ the class of mean zero functions whose sum over any closed loop is zero and are in $L^{d+\alpha}(\mathbb{P})$ (that is $\mathbb{E}[|F|^{d+\alpha}] \ < \infty$) for some $\alpha >0$.
\begin{definition}
A function $F: \Omega \times U \rightarrow \mathbb{R}$ is in class $\admiss$ if it satisfies the following three conditions:
\begin{enumerate}
 \item[(i)] Moment: for each $\unit \in U$, $F \in \bigcup_{\alpha>0} L^{d+\alpha}(\mathbb{P})$. 
  \item[(ii)] Mean Zero: for each $\unit \in U$, $\mathbb{E}[F(\omega ,\unit)]=0$.
  \item[(iii)] Closed Loop: For any finite sequence $\{x_i\}_{i=0}^n \in \mathbb{Z}^d$, such that $x_{i+1}-x_i \in U$ and $x_0=x_n$ 
  $$\sum_{i=0}^{n-1} F(x_i,x_{i+1})=0$$ 
    \end{enumerate} 
 \end{definition}
\begin{remark} \label{cl}
The closed loop condition $(iii)$ in the above definition guarantees that for any two points $x,y \in \mathbb{Z}^d$ the sum  $\sum_{i=0}^{m-1} F(z_i,z_{i+1})$ where $z_0=x, z_m=y$ and $z_{i+1}-z_i \in U$ is independent of the path $\{z_i\}$ chosen.
\end{remark}
According to Remark \ref{cl}, we can define unambiguously the sum of $F$ from one point $x$ to another $y$. We observe that the path can be chosen so that the number of summands is $|x-y|$. 
\begin{definition}
For $x,y \in \mathbb{Z}^d$ we define 
$$
\sum_{x \rightsquigarrow y} F
$$
as the sum of $F$ over any path from $x$ to $y$ as in Remark \ref{cl}. We also define for $x \in \mathbb{Z}^d$,
$$
f(x)=\sum_{0 \rightsquigarrow x} F
$$
\end{definition}
The large deviation principle will be stated in terms of a function, $\Lambda$.
\begin{definition} \label{lam}
$$
\Lambda(\lambda) := \inf_{F \in \admiss} \esup_\omega \log \sum_{\unit \in U} p(\omega  ,\unit) \base^{\lb \lambda , \unit \rb+F(\omega ,\unit)}
$$
where the $\esup$ is with respect to the measure $\mathbb{P}$.
\end{definition}
\begin{theorem} \label{a}
Suppose $\int \left |\log p(\omega ,\unit)\right|^{d+\alpha} d\mathbb{P}<\infty$ for some $\alpha >0$ and all $\unit \in \mathbb{Z}^d,  |\unit|=1$. Then
\begin{equation}
\begin{split}
 \lim _{n \rightarrow \infty} \frac{1}{n} \log E^{P_\omega}\left [\base^{ \lb \lambda, X_n \rb}  \right]&=\Lambda(\lambda)
 \end{split}
 \end{equation}
\end{theorem}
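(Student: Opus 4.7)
The plan is to sandwich $n^{-1} \log E^{P_\omega}[\base^{\lb \lambda, X_n \rb}]$ between matching upper and lower bounds, each converging $\mathbb{P}$-a.s.\ to $\Lambda(\lambda)$.

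\textbf{Upper bound.} For any $F \in \admiss$, put $f(x) := \sum_{0 \rightsquigarrow x} F$ (well-defined by Remark~\ref{cl}) and $C_F := \esup_\omega \log \sum_e p(\omega, e) \base^{\lb \lambda, e \rb + F(\omega, e)}$. A direct computation using the Markov property shows that $\base^{\lb \lambda, X_n \rb + f(X_n) - n C_F}$ is a nonnegative $P_\omega$-supermartingale for $\mathbb{P}$-a.e.\ $\omega$, hence $E^{P_\omega}\bigl[\base^{\lb \lambda, X_n \rb + f(X_n)}\bigr] \leq \base^{n C_F}$. Since $|X_n| \leq n$ deterministically, this yields
$$\frac{1}{n} \log E^{P_\omega}\bigl[\base^{\lb \lambda, X_n \rb}\bigr] \leq C_F + \frac{1}{n} \max_{|x| \leq n} |f(x)|,$$
and the error term is $o(1)$ $\mathbb{P}$-a.s.\ by a maximal-ergodic / Borel-Cantelli estimate on the stationary field $\{F(T_x \omega, \cdot)\}_{x \in \mathbb{Z}^d}$; this is precisely where the \emph{strict} $L^{d+\alpha}$ moment (rather than Zerner's $L^d$) is required, in order to dominate the max over a $\mathbb{Z}^d$-box of side $n$. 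Infimizing over $F$ gives $\limsup \leq \Lambda(\lambda)$.

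\textbf{Lower bound.} Set $v_M(\omega) := E^{P_\omega}[\base^{\lb \lambda, X_M \rb}]$ and first establish that $\bar\Lambda(\lambda) := \lim_n n^{-1} \log v_n(\omega)$ exists $\mathbb{P}$-a.s.\ as a deterministic constant, via near-subadditivity of $\log v_n$ together with ergodicity of $(\Omega, \mathbb{P}, \{T_e\})$. The natural candidate admissible function is the gradient
$$F_M(\omega, e) := \log v_M(T_e \omega) - \log v_M(\omega),$$
which is closed-loop by construction and mean-zero by $T_e$-invariance of $\mathbb{P}$, with $L^{d+\alpha}$-moment inherited from the hypothesis on $\log p$. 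One checks directly that $\log \sum_e p(\omega, e) \base^{\lb \lambda, e \rb + F_M(\omega, e)} = \log[v_{M+1}(\omega)/v_M(\omega)]$, so that for the Cesaro average $\bar F_N := N^{-1} \sum_{M=1}^N F_M \in \admiss$, convexity of the log-sum-exp map in $F$ telescopes to
$$\log \sum_e p(\omega, e) \base^{\lb \lambda, e \rb + \bar F_N(\omega, e)} \leq \frac{1}{N}\bigl(\log v_{N+1}(\omega) - \log v_1(\omega)\bigr) \longrightarrow \bar\Lambda(\lambda) \quad \mathbb{P}\text{-a.s.}$$
If this pointwise bound can be upgraded to an $\esup$ bound, one obtains $\Lambda(\lambda) \leq \bar\Lambda(\lambda)$. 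The alternative route, flagged in the introduction, is to apply Ky Fan's minimax theorem to the bilinear game $(F, Q) \mapsto \int \log \sum_e p(\omega, e) \base^{\lb \lambda, e \rb + F(\omega, e)} \, dQ(\omega)$ (convex in $F$, linear in $Q$), with $Q$ ranging over probability measures absolutely continuous w.r.t.\ $\mathbb{P}$; this rewrites $\Lambda(\lambda) = \sup_Q \inf_F \int (\cdots) \, dQ$, and the inner infimum can be identified with a Donsker-Varadhan-type entropy that is in turn bounded above by $\bar\Lambda(\lambda)$ via an annealed-to-quenched lifting.

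\textbf{Main obstacle.} The hard part is the lower bound. Two intertwined difficulties present themselves: upgrading the almost-sure pointwise convergence of $\log \sum_e p(\omega, e)\base^{\lb \lambda, e \rb + \bar F_N(\omega, e)}$ to essential-supremum control (this is a uniform-integrability issue that consumes the slack provided by the strict $d + \alpha$ moment), and setting up Ky Fan's minimax in a function-space topology where the required convex-concave structure and compactness both hold, with the dual object then identified precisely enough to recover $\bar\Lambda(\lambda)$. The rest of the argument — the supermartingale for the upper bound, verification that the candidate $F_M$ lies in $\admiss$, and the ergodic/maximal estimates on $f$ — is technically routine once the moment framework is in place.
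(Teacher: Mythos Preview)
Your upper bound is the paper's argument, but you are badly underestimating the ``maximal-ergodic / Borel--Cantelli estimate''. Showing that $\max_{|x|\le n}|f(x)|=o(n)$ for a general $F\in\admiss$ is not routine: a direct maximal inequality would require control of a supremum over $O(n^d)$ lattice points of a path sum (not an i.i.d.\ field), and there is no obvious subadditivity. The paper spends an entire chapter on this (Theorem~\ref{fon}): it interpolates $f$ to a continuous function on $\mathbb{R}^d$, rescales to $g_n(s)=\hat f(ns)/n$, and uses the Garsia--Rodemich--Rumsey inequality to obtain equicontinuity of $\{g_n\}$ from the $L^{d+\alpha}$ moment of $F$; the multivariate ergodic theorem then forces every subsequential limit to be identically zero. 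This is precisely where the strict $d+\alpha$ exponent enters, and it is not a Borel--Cantelli argument.

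For the lower bound you have identified the obstacle but not resolved it, and the paper's route is different from both of your alternatives. Your direct approach fails exactly where you say it might: the Ces\`aro bound $\log\sum_e p(\omega,e)\base^{\lb\lambda,e\rb+\bar F_N(\omega,e)}\le N^{-1}\log v_{N+1}(\omega)-N^{-1}\log v_1(\omega)$ converges only $\mathbb{P}$-a.s., and there is no mechanism to upgrade this to an $\esup$ bound (the right-hand side is not uniformly bounded in $\omega$ for fixed $N$). Your minimax alternative also fails as stated: as the paper explicitly notes, one cannot apply Ky Fan directly to the infinite-dimensional problem because there is no topology making the space of densities compact while keeping the objective upper semicontinuous. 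Separately, your claim that $\bar\Lambda=\lim n^{-1}\log v_n$ exists by ``near-subadditivity'' is unsubstantiated --- $\log v_n$ is not obviously subadditive in this non-elliptic setting, and the paper deliberately avoids subadditive arguments.

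What the paper actually does: it first proves a lower bound $\liminf\ge\Gamma(\lambda)$ by the standard change-of-measure / Jensen argument, where $\Gamma(\lambda)$ is a $\sup_{(q,\phi)}\inf_h$ variational formula over transition kernels $q$, densities $\phi$, and bounded test functions $h$. The real work is then to show $\Lambda(\lambda)\le\Gamma(\lambda)$. The key idea you are missing is a \emph{finite-$\sigma$-algebra discretization}: restrict $\phi$ and $q$ to be measurable with respect to a finite $\sigma$-algebra $\mathcal{E}_k$, so that the $\sup$ is over a compact finite-dimensional set and Ky Fan applies legitimately. This produces, for each $k$ and $\epsilon>0$, a bounded function $h_{k,\epsilon}$ whose coboundary (conditioned on $\mathcal{E}_{k-1}$) gives an approximate minimizer $F_{k,\epsilon}$. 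The $L^{d+\alpha}$ moment hypothesis on $\log p$ forces $\{F_{k,\epsilon}\}_k$ to be bounded in $L^{d+\alpha}$, hence weakly compact; Mazur's theorem then yields a convex-combination subsequence converging strongly (hence a.s.\ along a further subsequence), and the limit $F_\epsilon$ lies in $\admiss$ and satisfies the desired $\esup$ bound. The existence of the limit $\Lambda(\lambda)$ falls out of the sandwich $\Gamma\le\liminf\le\limsup\le\Lambda\le\Gamma$, with no subadditivity invoked.
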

\begin{theorem}{(Large Deviation Principle)}  \label{b} Under the assumptions of Theorem \ref{a}, 
$X_n/n$ obeys a large deviation principle with rate function
\begin{equation}
I(x)=\sup_\lambda \left\{ \lb \lambda, x \rb - \Lambda(\lambda) \right \}
\end{equation}
\end{theorem}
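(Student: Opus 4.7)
The plan is to deduce the LDP from Theorem \ref{a} by a Gärtner--Ellis / Baldi-type argument, using that $X_n/n$ lives in the compact $\ell_1$ unit ball $B_1$ and exploiting the variational representation of $\Lambda$ in Definition \ref{lam} to construct the tilted measure.

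I would first record elementary properties of $\Lambda$. Since $|X_n|\le n$ a.s., we have $|\Lambda(\lambda)|\le|\lambda|$, and each prelimit $\lambda\mapsto\frac{1}{n}\log E^{P_\omega}[\base^{\lb\lambda,X_n\rb}]$ is convex by Hölder's inequality, so $\Lambda$ is a finite convex function on $\mathbb{R}^d$, hence continuous. Its Legendre transform $I=\Lambda^*$ is then a convex, lower semicontinuous, good rate function supported in $B_1$, and exponential tightness of $X_n/n$ is automatic from $X_n/n\in B_1$.

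For the upper bound on a closed set $C$, a standard exponential Chebyshev estimate gives, for every $\lambda$ and every $y\in C\cap B_1$,
\[
P_\omega(X_n/n\in B_\epsilon(y))\le \base^{-n\lb\lambda,y\rb+n|\lambda|\epsilon}\,E^{P_\omega}\!\left[\base^{\lb\lambda,X_n\rb}\right].
\]
Taking $\frac{1}{n}\log$, invoking Theorem \ref{a}, optimizing over $\lambda$, letting $\epsilon\to 0$, and covering the compact $C\cap B_1$ by finitely many such balls yields $\limsup\frac{1}{n}\log P_\omega(X_n/n\in C)\le -\inf_{x\in C}I(x)$.

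The lower bound is the main obstacle. Since $\Lambda$ is finite convex on $\mathbb{R}^d$, the classical Gärtner--Ellis argument gives the lower bound at every exposed point of $I$ via an exponential tilt; because exposed points of $I$ are dense in the effective domain $B_1$ by Rockafellar's theorem, density together with the lower semicontinuity of $I$ extends the bound to all open sets. The tilt construction is where the variational formula from Definition \ref{lam} and the strengthened moment condition $L^{d+\alpha}$ enter: for an exposed point $x$ with $\lb\lambda,x\rb-\Lambda(\lambda)=I(x)$, pick a near-minimizing $F\in\admiss$ and define tilted transitions $\hat p(\omega,\unit):=p(\omega,\unit)\base^{\lb\lambda,\unit\rb+F(\omega,\unit)-V(\omega)}$ with $V(\omega):=\log\sum_{\unit\in U} p(\omega,\unit)\base^{\lb\lambda,\unit\rb+F(\omega,\unit)}$. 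By the closed-loop property of $F$, the Radon--Nikodym derivative of $\hat P_\omega$ with respect to $P_\omega$ on $\sigma(X_1,\ldots,X_n)$ telescopes to $\base^{\lb\lambda,X_n\rb+f(X_n)-\sum_{k=0}^{n-1} V(\mpt_{X_k}\omega)}$. Under $\hat P_\omega$ the environment viewed from the particle is stationary and ergodic with $X_n/n\to x$ (by the multivariate ergodic theorem, for appropriately chosen $F$), while the $L^{d+\alpha}$ bound on $F$ gives $|f(X_n)|=o(n)$ via a maximal ergodic estimate plus Borel--Cantelli, so inverting the tilt yields $\liminf\frac{1}{n}\log P_\omega(X_n/n\in G)\ge -(I(x)+O(\epsilon))$ for any $x\in G$, and sending $\epsilon\to 0$ closes the LDP.
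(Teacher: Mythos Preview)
Your upper bound is the same as the paper's: Chebyshev plus a finite cover of $C\cap B_1$. The issue is the lower bound, where there is a genuine gap.

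First, the density claim is wrong as stated. There is no Rockafellar theorem asserting that the exposed points of $\Lambda^*$ are dense in its effective domain when $\Lambda$ is merely finite and convex. A one-dimensional counterexample is $\Lambda(\lambda)=|\lambda|$, whose conjugate is $I\equiv 0$ on $[-1,1]$; the only exposed points are $\pm 1$. What Rockafellar gives is that the exposed points coincide with the range of $\nabla\Lambda$ over the points where $\Lambda$ is differentiable, and that this range fills the \emph{relative interior} of $\mathrm{dom}\,I$ precisely when $\Lambda$ is essentially smooth. You never verify differentiability of $\Lambda$, and nothing in the paper implies it. Moreover, even if exposed points were dense, lower semicontinuity of $I$ goes the wrong way: from $x_n\to x$ you get $\liminf I(x_n)\ge I(x)$, whereas to push the lower bound from $x_n$ to $x$ you would need $\limsup I(x_n)\le I(x)$.

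Second, the specific Markovian tilt you propose does not do what you claim. Defining $\hat p(\omega,\unit)=p(\omega,\unit)\base^{\lb\lambda,\unit\rb+F(\omega,\unit)-V(\omega)}$ gives a legitimate transition kernel, but there is no reason for $\mathbb{P}$ (or any measure you have written down) to be invariant for the environment chain under $\hat p$; the assertion ``the environment viewed from the particle is stationary and ergodic with $X_n/n\to x$'' is exactly the hard part and is not delivered by the multivariate ergodic theorem or by any property of $F\in\admiss$. The closed-loop telescoping of $f(X_n)$ is correct, but without a law of large numbers for $X_n/n$ under $\hat P_\omega$ the tilt cannot be inverted.

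The paper avoids both problems by \emph{not} going through G\"artner--Ellis for the lower bound. Instead it changes measure to an arbitrary Markov chain $Q_\omega$ on environments with transition $q$ and an invariant density $\phi$ satisfying $\int\sum_\unit \unit\,q(\omega,\unit)\phi\,d\mathbb{P}=x$; Jensen plus the ergodic theorem and the LLN (Theorem~\ref{slln}) give $\liminf\frac1n\log P_\omega(X_n/n\in D_\epsilon(x))\ge -g(x)$ with $g(x)$ the relative-entropy infimum over such pairs $(q,\phi)$. The identification $g=\Lambda^*$ then comes from the equality $\Lambda=g^*$ already contained in the proof of Theorem~\ref{a} together with a direct convexity argument for $g$ (Lemma~\ref{gcon}), not from any smoothness of $\Lambda$.
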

\begin{remark}
In one dimension $\alpha$ can be taken to be zero.
\end{remark}
\chapter{Functions in class $\admiss$}
In this chapter we prove an important property of functions in class $\admiss$. Theorem \ref{fon} below will play an a key role in the proof of the upper bound in Chapter 3. The main result of this chapter is;
\begin{theorem} \label{fon}
For $F \in \admiss$,
\begin{equation} \label{on}
\lim_{n \rightarrow \infty} \sup_{\substack{|z| \leq n \\ z \in \mathbb{Z}^d}} \frac{f(z)}{n}=0
\end{equation}
\end{theorem}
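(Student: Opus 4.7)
The idea is to use the cocycle structure of $f$ (arising from the closed-loop condition) to reduce the claim to a uniform statement for one-dimensional ergodic sums, and then to upgrade pointwise a.s.\ convergence to uniform convergence over the $\Theta(n^d)$ lattice points in $B_n$ by a moment-powered Borel--Cantelli argument. Because $F\in\admiss$ iff $-F\in\admiss$, it suffices to prove the equivalent estimate $\sup_{|z|\le n,\,z\in\mathbb{Z}^d}|f(z)|/n\to 0$.

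First, Remark~\ref{cl} gives path-independence, which is equivalent to the additive cocycle identity $f(x+y,\omega)=f(x,\omega)+f(y,T_x\omega)$. Walking along coordinate axes, for $z=(z_1,\dots,z_d)\in\mathbb{Z}^d$ this yields
\[
f(z,\omega)=\sum_{k=1}^d f(z_k\unit_k,\,T_{w_k}\omega),\qquad w_k:=z_1\unit_1+\cdots+z_{k-1}\unit_{k-1},
\]
so it is enough to show, for each coordinate direction $\unit\in U$, that
\[
\max_{w\in B_n\cap\mathbb{Z}^d}\;\max_{0\le m\le n}\frac{|f(m\unit,T_w\omega)|}{n}\longrightarrow 0 \quad\text{a.s.}
\]

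Next, I would check that the one-dimensional Birkhoff limit is zero. Birkhoff applied to $\{F(T_{j\unit}\omega,\unit)\}_j$ produces $f(n\unit,\omega)/n\to g_\unit(\omega):=\mathbb{E}[F(\cdot,\unit)\mid\mathcal{I}_\unit]$, which is a priori only $T_\unit$-invariant. The closed-loop identity around the unit square $0\to \unit\to \unit+\unit'\to \unit'\to 0$, combined with the $T_\unit$-invariance of conditional expectation on $\mathcal{I}_\unit$, gives a telescoping identity showing $g_\unit\circ T_{\unit'}=g_\unit$. Hence $g_\unit$ is invariant under every $T_{\unit'}$, and by joint ergodicity together with the mean-zero hypothesis, $g_\unit\equiv 0$ a.s.

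The bulk of the work is then upgrading this pointwise limit to uniform convergence over the $\Theta(n^d)$ starting points $w\in B_n$. Let $G(\omega):=\sup_{m\ge 1}|f(m\unit,\omega)|/m$; by the one-dimensional maximal ergodic theorem $G\in L^{d+\alpha}(\mathbb{P})$, and by dominated convergence $G_N:=\sup_{m\ge N}|f(m\unit,\cdot)|/m\to 0$ both a.s.\ and in $L^{d+\alpha}$. Choose $\beta\in\bigl(0,\alpha/(d+\alpha)\bigr)$ and split the supremum at $m=n^\beta$. For $m\le n^\beta$ use $|f(m\unit,T_w\omega)|/n\le n^{\beta-1}G(T_w\omega)$; Chebyshev at exponent $d+\alpha$ plus a union bound over $B_n$ give
\[
\mathbb{P}\!\Bigl(\max_{w\in B_n\cap\mathbb{Z}^d}G(T_w\omega)>n^{1-\beta}\Bigr)\le C\,n^{d-(1-\beta)(d+\alpha)},
\]
which is summable along the dyadic subsequence $n=2^j$ precisely because $\beta<\alpha/(d+\alpha)$. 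For $m>n^\beta$ use $|f(m\unit,T_w\omega)|/n\le G_{n^\beta}(T_w\omega)$ and a parallel Chebyshev-plus-union-bound, now powered by the $L^{d+\alpha}$-decay $\mathbb{E}G_N^{d+\alpha}\to 0$. Two Borel--Cantelli applications deliver the uniform limit along $n=2^j$, and monotonicity $B_n\subseteq B_{2^j}$ interpolates to every $n$.

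The chief obstacle is the last step: promoting an a.s.\ pointwise limit to a limit uniform in $\Theta(n^d)$ starting points while spending only the integrability $F\in L^{d+\alpha}$. The hypothesis has exactly $\alpha$ to spare against the lattice count $n^d$, and the threshold $\beta<\alpha/(d+\alpha)$ is engineered precisely so that the Chebyshev estimates in both halves of the split survive the union bound.
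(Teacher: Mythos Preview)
Your cocycle decomposition, the verification that the one-dimensional Birkhoff limit $g_\unit$ vanishes, and the first half of the Borel--Cantelli split (the range $m\le n^\beta$) are all correct. The gap is in the second half. There your union bound gives
\[
\mathbb{P}\Bigl(\max_{w\in B_n\cap\mathbb{Z}^d}G_{n^\beta}(T_w\omega)>\epsilon\Bigr)\;\le\; C\,\epsilon^{-(d+\alpha)}\,n^d\,\mathbb{E}\bigl[G_{n^\beta}^{\,d+\alpha}\bigr],
\]
and Borel--Cantelli along $n=2^j$ would require $\sum_j 2^{jd}\,\mathbb{E}[G_{2^{j\beta}}^{\,d+\alpha}]<\infty$, i.e.\ a quantitative rate $\mathbb{E}[G_N^{\,d+\alpha}]=o(N^{-d/\beta})$. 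But dominated convergence only gives the qualitative statement $\mathbb{E}[G_N^{\,d+\alpha}]\to 0$; in a merely ergodic (non-mixing) environment no rate is available, and if for instance $\mathbb{E}[G_N^{\,d+\alpha}]\asymp 1/\log N$ your sum diverges. In the first half the threshold $n^{1-\beta}$ supplied a power of $n$ to beat the $n^d$ from the union bound against the \emph{fixed} moment $\mathbb{E}[G^{d+\alpha}]$; in the second half the threshold is $\epsilon$, so there is nothing to absorb the $n^d$, and ``$L^{d+\alpha}$-decay'' is not enough.

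This is precisely the obstruction the paper's proof is designed to avoid. Instead of beating a union bound over $\Theta(n^d)$ lattice points with a rate the ergodic theorem does not provide, the paper interpolates $f$ to a continuous $\hat f$ on $\mathbb{R}^d$, sets $g_n(s)=\hat f(ns)/n$, and uses the Garsia--Rodemich--Rumsey inequality to prove equicontinuity of $\{g_n\}$ on $B_1$: the GRR hypothesis is a double integral that is bounded \emph{uniformly in $n$} using only $\mathbb{E}|F|^{d+\alpha}<\infty$ together with the multivariate ergodic averages of $|F|^{d+\beta}$ (Corollary~\ref{erg}). Arzel\`a--Ascoli then reduces the problem to identifying subsequential limits $g$, and Zygmund's multivariate ergodic theorem shows $\int (g(b_1,y')-g(a_1,y'))\,dy'=0$ over every box, forcing $g\equiv g(0)=0$. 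Compactness thus trades ``max over $n^d$ points'' for ``one continuous limit function,'' which the purely qualitative ergodic theorem can handle.
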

%-----------------------------------------------------------------------------------------------------------------------------------
In Chapter 3 we will use this theorem to show that given $\epsilon >0$ for $n$ large enough,
\begin{equation} \label{tozero}
\left | \sum_{j=1}^n F(X_{j-1},X_j) \right | \leq c_\epsilon +n \epsilon
\end{equation}
where $c_\epsilon$ is a constant depending on $\epsilon$. As we show in Remark \ref{one} of Chapter 3, in one dimension inequality (\ref{tozero}) follows easily from the ergodic theorem. For dimension greater than one the ergodic theorem is an average over the volume of a rectangle. What we need to prove (\ref{on}) for dimension two or more is an average over paths in the multidimensional integer lattice. Therefore, a direct application of the ergodic theorem will not work for dimension greater than 1. The trick is to use the multivariate ergodic theorem to prove convergence to zero on the fibers of the rectangle and then to use a continuity argument to extend the result to arbitrary paths.

We will use a compactness argument for a certain family of continuous functions which we construct from $f$. Recall that $f(x)=\sum_{0 \rightsquigarrow x} F$. We study a family of functions $\{g_n\}$ which are scaled versions of $f$, that is $g_n(t)=f(nt)/n$. This will allow us to use some results from analysis. In order for $g_n$ to be defined for $t \in \mathbb{R}^d$ we need to extend the domain of $f$ from $\mathbb{Z}^d$ to $\mathbb{R}^d$.  We will define $\hat{f}(t), t \in \mathbb{R}^d$ by interpolating $f$ over $d$-cubes. 

The strategy for the proof is to show that the sequence of functions $\{g_n\}$ converges uniformly to zero on bounded sets. Then for $n$ large enough we will have $|\hat{f}(ns)/n| \leq \epsilon$ which will imply (\ref{on}). The crucial step is to prove that $\{g_n\}$ is equicontinuous and hence compact. To accomplish this fact we rely on a theorem of Garsia, Rodemich and Rumsey (see \cite{stroock:79}) to derive an estimate of the modulus of continuity of the functions $\{g_n\}$ from the moment condtion $\mathbb{E}[|F|^{d+\alpha}]< \infty$.  We begin with the interpolation.

\section{Interpolation}
For clarity we define the interpolation of a function on the cube $[0,1]^d$ and so we need a way to translate an arbitrary point in $\mathbb{R}^d$ to the cube $[0,1]^d$ and back. Let $\pi_t$ be the vector in $\mathbb{Z}^d$ such that for any point $u$ in the cube containing $t$, $u-\pi_t \in [0,1]^d$.
\begin{definition}
For $\tau \in [0,1]^d$, $z \in \mathbb{Z}^d$, and $t \in \mathbb{R}^d$
$$
\tilde{f}(\tau, z) = \sum_{\eta \in \{0,1\}^d} \tau_1^{\eta_1} \tau_2^{\eta_2}\cdots  \tau_d^{\eta_d}(1-\tau_1)^{(1-\eta_1)}\cdots (1-\tau_d)^{(1-\eta_d)} f(z+\eta)
$$
and
$$
\hat{f}(t) = \tilde{f}(t-\pi_t, \pi_t)
$$
We also define,
$$
a_t(\eta,u)=(u-\pi_t)_1^{\eta_1} \cdots (u-\pi_t)_d^{\eta_d}(1-(u-\pi_t)_1)^{1-\eta_1} \cdots (1-(u-\pi_t)_d)^{1-\eta_d} 
$$
so that we can write
$$
\hat{f}(t) = \sum_{\eta \in \{0,1\}^d} a_t(\eta,t) f(\pi_t+\eta)
$$
\begin{remark}
For points on the face of a cube the translation vector will not be unique. This does not create a problem however, since $\hat{f}(t)$ is continuous. The reason for letting $a_t(\eta,u)$ depend on both $t$ and $u$ even though we set $u=t$ above is that if $s$ and $t$ are in the same cube then $\pi_s=\pi_t$ and $a_t(\eta,t)=a_s(\eta,t)$. We will need to use this fact below.
\end{remark}
\end{definition}
We can now define the family of functions $\{g_n\}$
\begin{definition}
$$
g_n(s)= \frac{1}{n} \hat{f}(ns); \quad s \in \mathbb{R}^d
$$
\end{definition}
\section{Ergodic Theorems}
We will make frequent use of Zygmund's multivariate ergodic theorem. We state it here for completeness and derive some useful corollaries. First we need,
\begin{definition}
We say that a function $Y \in  L \log^{d-1} L(\mathbb{P})$ if
$$
\int |Y| \log^{d-1}(|Y| \vee 1) d\mathbb{P} < \infty
$$
where $(a \vee b)=\max(a,b)$.
\end{definition}
\begin{theorem}[Multivaritate Ergodic Theorem, Zygmund] \label{met}
Let $\mpt_1, \ldots  , \mpt_d$ be an ergodic family of $\thinspace \mathbb{P}$-measure preserving transformations that commute. Then for any $Y \in L \log^{d-1} L(\mathbb{P})$, we have
$$
\lim_{n_1,n_2, \ldots, n_d \rightarrow \infty} \frac{1}{n_1 n_1 \cdots n_d}\sum_{i_1=0}^{n_1-1} \cdots \sum_{i_d=0}^{n_d-1} Y(T_1^{i_1} T_2^{i_2} \cdots T_d^{i_d} \omega) = \mathbb{E}[Y] \quad a.s.
$$
\end{theorem}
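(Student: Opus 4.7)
The plan is to follow the classical Wiener-style approach that underlies all multiparameter ergodic theorems: establish a multiparameter maximal inequality adapted to the Orlicz space $L \log^{d-1} L(\mathbb{P})$, prove a.s.\ convergence on the dense subclass of bounded functions by iterating Birkhoff's one-parameter theorem, and combine the two to extend convergence to all of $L \log^{d-1} L$.

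Writing the $d$-parameter average as
$$A_{\vec n} Y(\omega) = \frac{1}{n_1 \cdots n_d} \sum_{i_1=0}^{n_1-1} \cdots \sum_{i_d=0}^{n_d-1} Y(T_1^{i_1} T_2^{i_2} \cdots T_d^{i_d}\omega),$$
commutativity of $\{T_k\}$ factors this as $A_{\vec n} = A_{n_1}^{(1)} A_{n_2}^{(2)} \cdots A_{n_d}^{(d)}$, where $A_n^{(k)}$ is the one-parameter Birkhoff average in the $T_k$-direction. For $Y \in L^\infty(\mathbb{P})$ I would apply Birkhoff's theorem iteratively: the innermost limit is $T_1$-invariant, and because $T_2$-averaging preserves $T_1$-invariance (via commutativity), the next limit is invariant under both $T_1$ and $T_2$, and so on. After $d$ iterations the result is invariant under every $T_k$, and by the joint-ergodicity hypothesis it must equal $\mathbb{E}[Y]$ a.s.

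The next step is a maximal inequality for
$$M^* Y(\omega) = \sup_{n_1,\ldots,n_d \geq 1} |A_{\vec n} Y(\omega)|.$$
Iterating the one-parameter Hardy-Littlewood-Wiener weak-$(1,1)$ bound along each coordinate, and using the standard fact that the one-parameter maximal operator sends $L \log^k L$ into $L \log^{k-1} L$, one obtains, after $d$ iterations, a Zygmund-type estimate
$$\mathbb{P}\{M^* Y > \lambda\} \leq \frac{C}{\lambda} \int |Y|\bigl(1 + \log_+(|Y|/\lambda)\bigr)^{d-1}\, d\mathbb{P},$$
whose right-hand side is finite precisely when $Y \in L \log^{d-1} L$. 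With this in hand, convergence extends from $L^\infty$ by truncation: write $Y = Y_M + (Y - Y_M)$ with $Y_M = Y\mathbf{1}_{\{|Y|\leq M\}}$, apply the bounded-function result to $Y_M$, and control the tail $Y - Y_M$ via the maximal inequality as $M \to \infty$.

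The main obstacle is the multiparameter maximal inequality itself. The iteration of the one-parameter bound must be carried out carefully so that the logarithmic factor compounds exactly $d-1$ times, and the resulting constants stay uniform in $\vec n$; this is precisely Zygmund's contribution and explains why the classical multiparameter theorem fails on $L^1$ in dimension $d \geq 2$. A secondary technical point is the passage from iterated limits (which is what Birkhoff directly gives) to a genuine joint limit as $n_1,\ldots,n_d \to \infty$ independently; for bounded $Y$ this is easy from $\|A_{\vec n} Y\|_\infty \leq \|Y\|_\infty$, and in the general case it is absorbed into the maximal-inequality / $\limsup$ argument.
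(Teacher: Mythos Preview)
Your sketch is the standard Wiener--Zygmund approach and is essentially correct as an outline of the classical proof. However, there is nothing to compare against here: the paper does not supply its own proof of this theorem at all, but simply cites Kallenberg's textbook (\cite{Kallenbert:02}, pages 186--187). The theorem is quoted purely as a tool, and the paper's contribution lies elsewhere. So your proposal is not so much an alternative to the paper's argument as a fleshing-out of what the cited reference contains; the iterated maximal-inequality strategy you describe is precisely the route taken in standard treatments such as Kallenberg's.
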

\begin{proof}
see \cite{Kallenbert:02} pages 186-187.
\end{proof}
The next two corollaries follow immediately.
\begin{corollary} \label{metcor}
Under the assumptions of Theorem \ref{met} we have for $a_i \in (0, \infty)$, $i=1,\ldots, d$.
$$
\lim_{n \rightarrow \infty} \frac{1}{(\lfloor a_1 n \rfloor \lfloor a_2 n \rfloor \cdots \lfloor a_d n \rfloor) }\sum_{i_1=0}^{\lfloor a_1 n\rfloor -1} \cdots \sum_{i_d=0}^{\lfloor a_d n \rfloor -1} Y(T_1^{i_1} T_2^{i_2} \cdots T_d^{i_d} \omega) = \mathbb{E}[Y] \quad a.s.
$$
Here $\lfloor a \rfloor$ is the largest integer not greater than $a$.
\end{corollary}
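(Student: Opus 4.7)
The plan is to deduce this directly from Theorem \ref{met} by reparameterizing the multi-index limit. Set $n_i(n) := \lfloor a_i n \rfloor$ for each $i = 1, \ldots, d$. Since $a_i > 0$, we have $n_i(n) \to \infty$ as $n \to \infty$, so $\bigl(n_1(n), \ldots, n_d(n)\bigr)$ traces out a sequence in $\mathbb{N}^d$ along which every coordinate tends to infinity. Theorem \ref{met} asserts convergence of the ergodic averages as $(n_1, \ldots, n_d) \to (\infty, \ldots, \infty)$ jointly, and in particular along any such subsequence; taking $n_i = \lfloor a_i n \rfloor$ and observing that the denominator $\prod_{i=1}^d \lfloor a_i n \rfloor$ is exactly the cardinality of the index set of the $d$-fold sum yields the corollary.

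There is essentially no obstacle here, only a small bookkeeping point: the $\mathbb{P}$-null set on which Theorem \ref{met} fails can be chosen once and for all, independently of the approach to infinity, so the corollary holds almost surely on the complement of the same null set. I would simply fix $\omega$ in this full-measure set, note that $n_i(n) \to \infty$ through integers, and invoke the conclusion of Theorem \ref{met} evaluated at $\bigl(n_1(n), \ldots, n_d(n)\bigr)$ to conclude that the displayed average converges to $\mathbb{E}[Y]$.
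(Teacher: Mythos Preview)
Your argument is correct and matches the paper's treatment: the paper simply states that this corollary follows immediately from Theorem~\ref{met}, and your reparameterization $n_i = \lfloor a_i n\rfloor$ is exactly the immediate deduction intended. Your remark about the null set being chosen uniformly is a sound clarification, though the paper does not spell it out.
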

\begin{corollary}\label{met2}
Let $\mpt_1, \ldots  , \mpt_d$ and their inverses $\mpt_{-1}, \ldots  , \mpt_{-d}$ be an ergodic family of $\mathbb{P}$-measure preserving transformations that commute. Then for any $Y \in L \log^{d-1} L(\mathbb{P})$, we have
$$
\lim_{n \rightarrow \infty} \frac{1}{n^d}\sum_{i_1=-n+1}^{n-1} \cdots \sum_{i_d=-n+1}^{n-1} Y(T_1^{i_1} T_2^{i_2} \cdots T_d^{i_d} \omega) =2^d \mathbb{E}[Y] \quad a.s.
$$
\end{corollary}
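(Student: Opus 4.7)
The plan is to decompose the symmetric cube $\{-n+1,\ldots,n-1\}^d$ into $2^d$ one-signed ``octants'' and apply Corollary \ref{metcor} in each octant separately, so that the factor $2^d$ arises combinatorially from the number of octants. The proof is a bookkeeping argument: there is no new ergodic content beyond the one-sided multivariate ergodic theorem already invoked.

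First I would fix a sign vector $\sigma = (\sigma_1,\ldots,\sigma_d)\in\{-1,+1\}^d$ and set $S_k^\sigma := T_k^{\sigma_k}$. Each $S_k^\sigma$ is one of the $T_{\pm k}$, so $\{S_k^\sigma\}_{k=1}^d$ is a commuting family of $\mathbb{P}$-measure preserving maps, and any set invariant under the whole family is in particular invariant under every $T_{\pm k}$, hence has $\mathbb{P}$-measure zero or one by hypothesis. Thus $\{S_k^\sigma\}_{k=1}^d$ satisfies the hypotheses of Corollary \ref{metcor}. Next I would write $\{-n+1,\ldots,n-1\} = \{0,1,\ldots,n-1\}\cup\{-(n-1),\ldots,-1\}$ and take the $d$-fold product, partitioning the index set into $2^d$ disjoint octants $O_\sigma$. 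On $O_\sigma$, the substitution $j_k = |i_k|$ rewrites $T_1^{i_1}\cdots T_d^{i_d}\omega$ as $(S_1^\sigma)^{j_1}\cdots(S_d^\sigma)^{j_d}\omega$, with $j_k$ ranging over $\{0,\ldots,n-1\}$ if $\sigma_k=+1$ and over $\{1,\ldots,n-1\}$ if $\sigma_k=-1$. Completing the latter ranges to $\{0,\ldots,n-1\}$ costs at most $O(n^{d-1})$ extra terms concentrated on the hyperplanes $\{i_k=0\}$, so the octant sum divided by $n^d$ equals
$$
\frac{1}{n^d}\sum_{j_1=0}^{n-1}\cdots\sum_{j_d=0}^{n-1} Y\bigl((S_1^\sigma)^{j_1}\cdots(S_d^\sigma)^{j_d}\omega\bigr) \; + \; o(1)
$$
almost surely, and Corollary \ref{metcor} with $a_1=\cdots=a_d=1$ shows that this tends to $\mathbb{E}[Y]$. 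Summing over the $2^d$ choices of $\sigma$ then gives the desired limit $2^d\,\mathbb{E}[Y]$.

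The only mildly delicate point is verifying that the boundary hyperplanes $\{i_k = 0\}$ contribute $o(n^d)$ almost surely. Each such hyperplane contains $(2n-1)^{d-1} = O(n^{d-1})$ lattice points, and applying Theorem \ref{met} in dimension $d-1$ to $|Y|$ (which requires only $|Y|\in L\log^{d-2}L$, a condition implied by the hypothesis $Y\in L\log^{d-1}L$) controls the sum of $|Y|$ over it by $O(n^{d-1})$ almost surely. Dividing by $n^d$ makes these corrections vanish, and the argument is complete. The main obstacle is thus purely notational rather than analytic: keeping track of the $2^d$ octants and the lower-order boundary corrections.
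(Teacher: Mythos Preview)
Your approach---partitioning the symmetric cube into $2^d$ sign-octants, verifying that each family $\{S_k^\sigma\}$ inherits ergodicity (via invertibility: invariance under $T$ is equivalent to invariance under $T^{-1}$), and applying the one-sided theorem in each octant---is exactly the natural argument, and is presumably what the paper intends: the paper gives no proof beyond the remark preceding Corollary~\ref{metcor} that both corollaries ``follow immediately'' from Theorem~\ref{met}.

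One small wrinkle in your boundary step. When you invoke Theorem~\ref{met} in dimension $d-1$ on a coordinate hyperplane, the relevant $(d-1)$-transformation subfamily need not be ergodic even though the full family is, so Theorem~\ref{met} as stated does not literally apply. What you actually need is only that the $(d-1)$-dimensional averages of $|Y|$ stay almost surely bounded, and the \emph{non-ergodic} Zygmund theorem (convergence to a conditional expectation) gives that. Alternatively you can avoid the boundary entirely: on an octant where $\sigma_k=-1$, shift that index by one so its range becomes $\{0,\ldots,n-2\}$ and the base point becomes $S_k^\sigma\omega$; Theorem~\ref{met} then applies directly (it allows the upper limits $n_k$ to differ across coordinates), and since $\omega\mapsto\prod_{k:\sigma_k=-1}S_k^\sigma\omega$ is measure preserving, the almost-sure conclusion transfers back to $\omega$ with $(n-1)^m n^{d-m}/n^d\to1$. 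Either fix is routine, so your proposal is correct.
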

\begin{corollary} \label{erg}
Suppose there exist $Y(\omega)$ and $\alpha > 0$ such that $\mathbb{E}[|Y|^{d+\alpha}]<\infty$. Let $\{T_i\}_{i=1}^d$ and their inverses $\{T_{-i}\}_{i=1}^d$ be a $\mathbb{P}$-ergodic family of measure preserving, commuting transformations. Then for $\beta \leq \alpha$ 
$$
\lim_{n \rightarrow \infty} \frac{1}{n^d}\sum_{i_1=-n+1}^{n-1} \cdots \sum_{i_d=-n+1}^{n-1} T_1^{i_1} T_2^{i_2} \cdots T_d^{i_d} |Y(\omega)|^{d+\beta} = 2^d \mathbb{E}[|Y|^{d+\beta}]
$$
\end{corollary}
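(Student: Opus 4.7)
The plan is to reduce Corollary \ref{erg} directly to Corollary \ref{met2} by applying the latter to the new random variable $Z := |Y|^{d+\beta}$. The commuting/ergodic structure of the $\{T_i\}$ is inherited from the hypotheses, and the left-hand sum in the corollary is precisely a Cesàro average of $Z(T_1^{i_1} \cdots T_d^{i_d}\omega)$. So once I know $Z \in L \log^{d-1} L(\mathbb{P})$, Corollary \ref{met2} fires and gives the claimed limit $2^d \mathbb{E}[Z] = 2^d \mathbb{E}[|Y|^{d+\beta}]$.

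The only genuine work is verifying the integrability condition $\mathbb{E}\bigl[|Y|^{d+\beta} \log^{d-1}(|Y|^{d+\beta} \vee 1)\bigr] < \infty$. Since $\log(|Y|^{d+\beta}) = (d+\beta)\log|Y|$, this is equivalent (up to the harmless constant $(d+\beta)^{d-1}$) to
$$
\mathbb{E}\bigl[|Y|^{d+\beta} \log^{d-1}(|Y| \vee 1)\bigr] < \infty.
$$
For $d=1$ the logarithm is absent and this reduces to $\mathbb{E}[|Y|^{1+\beta}] < \infty$, which follows from $\beta \leq \alpha$ and the crude bound $|Y|^{1+\beta} \leq 1 + |Y|^{1+\alpha}$.

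For $d \geq 2$ I would use the elementary estimate $\log x \leq C_\epsilon x^\epsilon$, valid for $x \geq 1$ and any $\epsilon > 0$, with the choice $\epsilon = (\alpha - \beta)/(d-1)$. This yields, on $\{|Y| \geq 1\}$,
$$
|Y|^{d+\beta} \log^{d-1}(|Y|) \leq C \,|Y|^{d+\beta + (d-1)\epsilon} = C \,|Y|^{d+\alpha},
$$
and the integrand vanishes on $\{|Y| < 1\}$, so the expectation is dominated by $C \mathbb{E}[|Y|^{d+\alpha}] < \infty$.

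The only mild subtlety, which I would flag rather than truly fight, is the boundary case $\beta = \alpha$, in which the exponent $\epsilon$ above degenerates to zero. It is easiest handled by taking $\beta < \alpha$ strict; in the applications that follow (functions $F \in \bigcup_{\alpha>0} L^{d+\alpha}(\mathbb{P})$) there is always a sliver of room in the exponent to absorb the logarithmic loss, so this causes no damage downstream. With that understood, no real obstacle remains: the entire content of the corollary is packaged in this $L \log^{d-1} L$ bound, after which Corollary \ref{met2} is invoked verbatim.
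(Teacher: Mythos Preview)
Your proposal is correct and follows essentially the same route as the paper: both reduce to Corollary~\ref{met2} by checking $|Y|^{d+\beta}\in L\log^{d-1}L(\mathbb{P})$, and both do so via the elementary domination of $\log^{d-1}|Y|$ by $|Y|^{\alpha-\beta}$ for large $|Y|$. Your observation about the boundary case $\beta=\alpha$ is apt; the paper's own proof likewise writes ``for any $\alpha-\beta>0$'' and so tacitly uses the strict inequality, which, as you note, is all that is ever needed downstream.
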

\begin{proof}
By Corollary \ref{met2} it is sufficient to show
$$
\int_\Omega |Y|^{d+\beta} \log^{d-1}(|Y|^{d+\beta} \vee 1) d\mathbb{P} < \infty
$$
For any $\alpha -\beta>0$, there is a constant $a \geq 1$ such that for $|Y|>a$, $|Y|^{\alpha-\beta} > \log^{d-1} |Y|$. Therefore,
\begin{align*}
\int_\Omega |Y|^{d+\beta} \log^{d-1}(|Y|^{d+\beta} \vee 1)   &\leq \int_{|Y| \leq a} |Y|^{d+\beta} \log^{d-1}(|Y|^{d+\beta} \vee 1)  +\int_{|Y| > a} |Y|^{d+\alpha}\\
& \leq a^{d+\beta} \log^{d-1} (a^{d+\beta}) +\int_{\Omega} |Y|^{d+\alpha} < \infty
\end{align*}
\end{proof}
\section{Equicontinuity}
The core of the proof is to show that $g_n$ is an equicontinous family of functions. We will accomplish this by using the Garsia, Rodemich, Rumsey (GRR) theorem to derive a modulus of continuity from the the integrability condition. The GRR theorem will give us for each $\omega$ and for all $n$ an estimate of the form
$$
|g_n(x)-g_n(y)| \leq c_\omega |y-x|^\delta
$$
where $c_\omega$ is a constant depending on $\omega$ and the dimension $d$, and $\delta>0$. For our purposes the following version of the GRR theorem will suffice. For the proof and the more general version see Stroock and Varadhan \cite{stroock:79}.
\begin{theorem}[Garsia, Rodemich, Rumsey] \label{grr}
Let $h:\mathbb{R}^d \rightarrow \mathbb{R}$ be a continuous function on $B_2$, assume $\gamma > 2d$. If
\begin{equation} \label{grrcond}
\int_{B_1} \int_{B_1} \frac{|h(x)-h(y)|^{d+\alpha}}{|x-y|^\gamma}dx dy \leq c_0
\end{equation}
then for $x,y \in B_1$,
\begin{equation} \label{grrres}
|h(x)-h(y)| \leq c_1 |x-y|^\frac{\gamma-2d}{d+\alpha}
\end{equation}
where $c_1$ depends on $c_0$ and on the dimension $d$.
\end{theorem}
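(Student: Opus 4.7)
The plan is to establish the H\"older estimate via a Campanato-style dyadic averaging argument. The key observation is that on any ball $B = B(z,\rho)\subset B_1$, every pair $u,v\in B$ satisfies $|u-v|\leq 2\rho$, so hypothesis (\ref{grrcond}) upgrades to the local integrability
\[
\int_{B}\!\int_{B} |h(u)-h(v)|^{d+\alpha}\,du\,dv \;\leq\; (2\rho)^\gamma c_0.
\]
Applying Jensen's inequality (valid since $d+\alpha\geq 1$) to the mean $\bar{h}_B := |B|^{-1}\int_B h$ gives $|h(u)-\bar{h}_B|^{d+\alpha}\leq |B|^{-1}\int_B |h(u)-h(v)|^{d+\alpha}\,dv$, and integrating over $u\in B$ yields the Campanato-type bound
\[
\int_B |h(u)-\bar{h}_B|^{d+\alpha}\,du \;\leq\; \frac{(2\rho)^\gamma c_0}{|B|} \;=\; C\rho^{\gamma-d}.
\]

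Next, I would iterate this on concentric dyadic balls $B_n=B(z,\rho_n)$ with $\rho_n=2^{-n}\rho_0$ and $B_0\subset B_1$. Using $B_{n+1}\subset B_n$ and Jensen again,
\[
|\bar{h}_{n+1}-\bar{h}_n|^{d+\alpha}
\;\leq\; \frac{1}{|B_{n+1}|}\int_{B_{n+1}}|h(u)-\bar{h}_n|^{d+\alpha}\,du
\;\leq\; \frac{C\rho_n^{\gamma-d}}{|B_{n+1}|} \;\leq\; C'\rho_n^{\gamma-2d},
\]
so $|\bar{h}_{n+1}-\bar{h}_n|\leq C''\rho_n^{\delta}$ with $\delta:=(\gamma-2d)/(d+\alpha)>0$. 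The geometric series $\sum_n \rho_n^{\delta}$ sums to $C'''\rho_0^{\delta}$, and because $h$ is continuous, $\bar{h}_n\to h(z)$. Therefore $|h(z)-\bar{h}_{B(z,\rho_0)}|\leq C'''\rho_0^{\delta}$ whenever $B(z,\rho_0)\subset B_1$.

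For $x,y\in B_1$ with $r:=|x-y|$, I would select an auxiliary ball $B^*\subset B_1$ of radius $\asymp r$ containing both $x$ and $y$, chosen near the midpoint and shifted inward if necessary, and decompose $h(x)-h(y)=(h(x)-\bar{h}_{B^*})+(\bar{h}_{B^*}-h(y))$. By the previous bound, together with a one-step comparison between $\bar{h}_{B(x,r/2)}$ and $\bar{h}_{B^*}$ using $B(x,r/2)\subset B^*$ and $|B^*|\leq C|B(x,r/2)|$, each term is at most $C r^\delta$, giving (\ref{grrres}).

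The main obstacle is managing the boundary: when $x$ or $y$ is close to $\partial B_1$, the symmetric ball of radius $r$ about the midpoint may fail to sit inside $B_1$. This is precisely the purpose of the hypothesis that $h$ is continuous on the larger ball $B_2$; it provides enough slack to translate the auxiliary balls inward while keeping all dyadic chains strictly inside $B_1$ where (\ref{grrcond}) is in force, at the cost of a constant $c_1$ depending only on $c_0,d,\alpha,\gamma$.
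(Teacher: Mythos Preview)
The paper does not actually prove this theorem; it simply refers the reader to Stroock and Varadhan \cite{stroock:79}. So there is no in-paper argument to compare against, only the cited classical proof. Your Campanato-style approach is sound and is a genuinely different route from that classical proof: the original GRR argument is stated for general Young functions $\Psi$ and gauges $p$, and proceeds by recursively selecting ``good'' points $t_n\to x$ via a Chebyshev-type bound on $u\mapsto \int \Psi\bigl(|h(u)-h(v)|/p(|u-v|)\bigr)\,dv$, then telescoping $h(x)-h(y)$ along the resulting chain. Your version replaces the pointwise selection by ball averages and Jensen, which is tidier in the pure power case $\Psi(t)=t^{d+\alpha}$, $p(t)=t^{\gamma/(d+\alpha)}$, but does not extend to the general $\Psi,p$ setting.

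One correction to your boundary discussion: continuity on $B_2$ does not provide the slack you describe, because the integral hypothesis (\ref{grrcond}) lives only on $B_1\times B_1$, so every ball in your chain must remain inside $B_1$ regardless of how far $h$ extends. The right fix is to use non-concentric chains. For $x\in B_1$ and $\rho_n=2^{-n}\rho_0$, set $z_n=(1-\rho_n)x$ and $B_n=B(z_n,\rho_n)$; then $|x-z_n|=\rho_n|x|\le\rho_n$ gives $x\in B_n$, the triangle inequality gives $B_n\subset B_1$, and $|z_{n+1}-z_n|+\rho_{n+1}\le\rho_n$ gives $B_{n+1}\subset B_n$ with $|B_n|/|B_{n+1}|=2^d$. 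Your dyadic estimate then carries over verbatim, and continuity of $h$ on $\overline{B_1}$ yields $\bar h_{B_n}\to h(x)$. With this amendment the argument is complete.
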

Clearly we need to show that the integral in (\ref{grrcond}) applied to $g_n$ is bounded by a constant independent of $n$ (depending on $\omega$). We divide the domain of integration into two parts giving us the sum of two double integrals. First we integrate over the region $\{x,y\in B_1, |x-y| \leq 2d/n\}$. Then we integrate over $\{x,y \in B_1, |x-y| > 2d/n\}$.
\begin{lemma}
Suppose $F \in \admiss$, then there is a $\beta>0$ such that for $\gamma < 2d+\beta$, the 2-$d$ dimensional integral
$$
\lim_{n \rightarrow \infty} \int_{B_1} \int_{B_{2d/n}(x)\cap B_1} \frac{|g_n(y)-g_n(x)|^{d+\beta}}{|y-x|^{\gamma}} dy dx = c <\infty
$$
\end{lemma}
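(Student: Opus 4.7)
The plan is to exploit the multilinear structure of $\hat f$ inside each unit cube to produce a pointwise Lipschitz-type bound of the form $|g_n(y)-g_n(x)|\le C_d\,|y-x|\,M(\pi_{nx})$ valid whenever $|y-x|\le 2d/n$, where we set $M(\pi)=\max\{|F(z,z+\unit)| : z\in(\pi+[-R,R]^d)\cap\mathbb Z^d,\;\unit\in U\}$ for a fixed constant $R$ depending only on $d$. Since $\hat f$ is multilinear on each cube of the integer lattice, its partial derivatives on a cube with corner $\pi$ are convex combinations of edge differences of the form $f(\pi+\eta')-f(\pi+\eta)=F(\pi+\eta,\pi+\eta')$, and are therefore dominated by $M(\pi)$. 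Because $|ny-nx|\le 2d$ the segment $[nx,ny]$ meets only a bounded (depending on $d$) number of unit cubes whose corners lie within $\ell^\infty$-distance $R$ of $\pi_{nx}$; walking along this segment and summing the per-cube estimates produces the claimed bound, after dividing through by $n$.

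With this bound in hand, the inner integral is immediate: since $M(\pi_{nx})$ does not depend on $y$ it pulls out, and
\begin{align*}
\int_{B_1}\int_{B_{2d/n}(x)\cap B_1}\frac{|g_n(y)-g_n(x)|^{d+\beta}}{|y-x|^\gamma}\,dy\,dx
&\le C\int_{B_1} M(\pi_{nx})^{d+\beta}\,dx\int_{B_{2d/n}}|z|^{d+\beta-\gamma}\,dz.
\end{align*}
A polar-coordinate evaluation gives the $z$-integral as $c_d\int_0^{2d/n}r^{2d+\beta-\gamma-1}\,dr=c'_d\,n^{\gamma-2d-\beta}$, which is finite precisely because $\gamma<2d+\beta$. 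For the outer factor I change variable via $u=nx$ to obtain $n^{-d}\int_{B_n}M(\pi_u)^{d+\beta}\,du$, which, since $M(\pi_u)$ is constant on each unit cube, is a discrete sum $n^{-d}\sum_k M(k)^{d+\beta}$ over lattice points $k$ whose base cube meets $B_n$.

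To handle this discrete average I invoke Corollary \ref{erg}. Because $M$ is a maximum of finitely many translates of $|F|$ and $F\in L^{d+\alpha}(\mathbb P)$, the random variable $M$ also lies in $L^{d+\alpha}(\mathbb P)$; hence, for any $\beta\in(0,\alpha]$, Corollary \ref{erg} yields $n^{-d}\sum_k M(k)^{d+\beta}\to c''\,\mathbb E[M^{d+\beta}]<\infty$. Multiplying the two factors produces a total bound of order $n^{\gamma-2d-\beta}$, which tends to $0$ as $n\to\infty$ whenever $\gamma<2d+\beta$. Consequently the limit in the lemma exists and equals $0$, in particular is finite, as required.

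The principal technical obstacle is the first step: making the Lipschitz bound across cube boundaries rigorous. Inside a single cube multilinearity makes the estimate trivial, but when $nx$ and $ny$ lie in distinct cubes one must track the telescoping of $\hat f$ along the segment $[nx,ny]$ through the $O_d(1)$ cubes it meets and absorb all of the involved edge-differences of $f$ into a single local maximum $M(\pi_{nx})$ by choosing $R$ large enough (for instance $R=2d$). Once this uniform pointwise estimate is in place the remainder is a routine change of variables together with an application of the multivariate ergodic theorem.
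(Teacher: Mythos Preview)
Your proposal is correct and follows essentially the same route as the paper: both arguments first establish a pointwise Lipschitz estimate $|\hat f(t)-\hat f(s)|\le C_d\,|t-s|\cdot(\text{local values of }|F|)$ for $|t-s|\le 2d$ (the paper via the explicit multilinear coefficients $a_t(\eta,\cdot)$ and Lemma~\ref{tms}, you via the bounded gradient of a multilinear interpolant), then factor the double integral into a convergent radial piece $\int_{B_{2d}}|w|^{d+\beta-\gamma}\,dw$ and a spatial average of $|F|^{d+\beta}$ handled by Corollary~\ref{erg}. The only cosmetic differences are that the paper works after the change of variables $s=nx,\,t=ny$ and bounds by a finite \emph{sum} of nearby $|F|$ values, whereas you keep the $g_n$ scale and use the local \emph{maximum} $M(\pi_{nx})$; your observation that the limit is actually $0$ (not merely finite) is also what the paper's bound yields, since $3d+\beta-\gamma>d$ strictly.
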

%-----------------------------------------------------------------------------------------------------------------------------------
\begin{proof}
Choose $\alpha$ so that $F \in L^{d+\alpha}$ and set $\beta = \alpha/2$. By the definition of $g_n$ and the change of variables $t=ny$, $s=nx$, we can write the integral in the lemma as
\begin{align*}
&=\frac{1}{n^{d+\beta}} \int_{B_1} \int_{B_{2d/n}(x)\cap B_1} \frac{|\hat{f}(ny)-\hat{f}(nx)|^{d+\beta}}{|y-x|^{\gamma}} dy dx\\
&=\frac{1}{n^{3d+\beta-\gamma}} \int_{B_n} \int_{B_{2d}(s)\cap B_n} \frac{|\hat{f}(t)-\hat{f}(s)|^{d+\beta}}{|t-s|^{\gamma}} dt ds
\end{align*}
We begin by considering $s$ and $t$ in the same cube. If $s$ and $t$ are in the same cube $\pi_t = \pi_s$ and $\sum_{\eta \in \{0,1\}^d} a_t(\eta,u)=1$. Therefore since $f(\pi_t)$ does not depend on $\eta$,
\begin{align}
|\hat{f}(t)-\hat{f}(s)|&=\left | \sum_{\eta \in \{0,1\}^d} (a_t(\eta,t)-a_t(\eta,s)) (f(\pi_t +\eta)-f(\pi_t))\right| \\
& \leq \sum_{\eta \in \{0,1\}^d}\left| a_t(\eta,t)-a_t(\eta,s)\right| \left |f(\pi_t +\eta)-f(\pi_t) \right |
\end{align}
We now show that  $\left| (a_t(\eta,t)-a_t(\eta,s)\right| \leq c |t-s|$. We use the following lemma,
\begin{lemma} \label{tms}
If $s,t \in [0,1]^n$ then $|t_1 t_2 \cdots t_n - s_1 s_2 \cdots s_n| \leq |t-s|$.
\end{lemma}
\begin{proof}[Proof of Lemma \ref{tms}]
By induction. Clearly the lemma is true for $n=1$. Suppose it is true for $n$, consider,
\begin{align*}
|t_1 t_2 \cdots t_{n+1} &- s_1 s_2 \cdots s_{n+1}| \\
&=|t_1 t_2 \cdots t_{n+1} -t_1 t_2 \cdots t_{n }s_{n+1}+ t_1 t_2 \cdots t_{n}s_{n+1}-s_1 s_2 \cdots s_{n+1}| \\
&=|(t_1 t_2 \cdots t_{n})(t_{n+1}-s_{n+1})+s_{n+1}(t_1 t_2 \cdots t_{n}-s_1 s_2 \cdots s_{n})|\\
\intertext{(and since $|t_1 t_2 \cdots t_n| \leq1$)}
&\leq |t_{n+1}-s_{n+1}|+|t_1 t_2 \cdots t_n - s_1 s_2 \cdots s_n| \\
\intertext{by the induction hypothesis}
&\leq|t_{n+1}-s_{n+1}|+\sum_{i=1}^n|t_i-s_i|=|t-s|
\end{align*}
\end{proof}
We observe that,
$
a_t(\eta,t)-a_t(\eta,s)
$
is the difference between two products, each with at most $d$ factors, so that we may write it as,
$$
t'_1 \cdots t'_d -s'_1 \cdots s'_d  
$$ 
where the $s'_j,t'_j \in [0,1]$. By Lemma \ref{tms} each of these terms is less than or equal to $|t'-s'|=|t-s|$. 
We have shown that for $s$, $t$ in the same cube,
\begin{align*}
|\hat{f}(t)-\hat{f}(s)| &\leq  |t-s| \sum_{\eta \in \{0,1\}^d}|f(\pi_t+\eta)-f(\pi_t)|\\
& \leq  |t-s| \sum_{\eta \in \{0,1\}^d} \sum_{\pi_t \rightsquigarrow \pi_t +\eta} |F|
\end{align*}
If $t$ and $s$ are in different cubes then there is a sequence of points $\{u^{(i)}\}_{i=1}^m$ where $m \leq c$, and $c$ is a constant depending only on the dimension such that the pairs $(s,u^{(1)}), (u^{(i)},u^{(i+1)}), \ldots , (u^{(m)},t)$ are in the same cubes and satisfy for $i=1, \ldots d$,
\begin{equation} \label{eqcond}
\min(s_i,t_i) \leq u^{(1)}_i \leq \ldots \leq u^{(m)}_i \leq \max(s_i,t_i)
\end{equation}
By the triangle inequality,
\begin{align} 
|\hat{f}(t)-\hat{f}(s)| & \leq |\hat{f}(t)-\hat{f}(u^{(m)})| +|\hat{f}(u^{(m)})-\hat{f}(u^{(m-1)})| \\
&+ \cdots +|\hat{f}(u^{(2)})-\hat{f}(u^{(1)})|+|\hat{f}(u^{(1)})-\hat{f}(s)|\\
\intertext{using the above inequality for points in the same cube}
& \leq  \sum_{j=1}^m \sum_{\eta \in \{0,1\}^d} \sum_{\pi_{u^{(j)}} \rightsquigarrow \pi_{u^{(j)}}+\eta} |F| \\
& \cdot \left( |t-u^{(m)}| +|u^{(m)}-u^{(m-1)}| + \cdots +|u^{(2)}-u^{(1)}|+|u^{(1)}-s|\right) \label{tri}
\end{align}
Since $|\cdot|$ is the $\ell_1$-norm, we have for $s_1 \leq u_1 \leq t_1$,
$$
|t_1-s_1| = t_1-s_1=t_1-u_1+u_1-s_1=|t_1-u_1|+|u_1-s_1|
$$
hence, our choice of the $u^{(i)}$, $i=1,\ldots, m$,  guarantees that (\ref{tri}) above is  equal to
$$
|t-s|  \sum_{j=1}^m \sum_{\eta \in \{0,1\}^d} \sum_{\pi_{u^{(j)}}\rightsquigarrow \pi_{u^{(j)}}+\eta} |F| 
$$
Since $|t-s| \leq 2d$ the number of terms in the triple sum is bounded by a constant. Hence it will suffice to show that for any $e \in U$,
$$
\frac{c}{n^{3d+\beta-\gamma}}  \int_{B_n} \int_{B_{2d}(s)\cap B_n} \left | F(\pi_s, \pi_s +e) \right |^{d+\beta} |t-s|^{d+\beta - \gamma} dt ds
$$ 
converges to a finite limit. The previous display is,
\begin{align*}
&\leq \frac{c}{n^{3d+\beta-\gamma}} \int_{B_n}  \left | F(\pi_s, \pi_s +e) \right| ^{d+\beta}   \left (\int_{B_{2d(s)}} |t-s|^{d+\beta - \gamma} dt \right) ds \\
&=\frac{c}{n^{3d+\beta-\gamma}} \int_{B_n}   \left | F(\pi_s, \pi_s +e) \right|^{d+\beta}  ds \int_{B_{2d}} |w|^{d+\beta - \gamma} dw
\end{align*}
The assumption on $\gamma$ guarantees that the righthand integral is finite and that $3d+\beta-\gamma >d$. So to prove the claim we need to show that
$$
\frac{c}{n^{d}} \int_{B_n}   \left | F(\pi_s, \pi_s +e) \right|^{d+\beta}  ds = \frac{c}{n^{d}} \sum_{|u \in \mathbb{Z}^d| \leq n}  \left| F(u,u+e)\right|^{d+\beta} 
$$ converges to a finite limit, but this follows from Corollary \ref{erg}. 
\end{proof}
%-----------------------------------------------------------------------------------------------------------------------------------
We now consider the integral over the region $\{B_1 \cap |x-y| > 2d/n\}$.
\begin{lemma}
Suppose $F \in \admiss$, then there exists $\alpha>0$ so that if $\gamma > 2d+\alpha-1$, then 
\begin{equation} \label{grr2}
\limsup_{n \rightarrow \infty}\int_{B_1} \int_{B^c_{2d/n}(x) \cap B_1} \frac{|g_n(y)-g_n(x)|^{d+\alpha}}{|y-x|^{\gamma}} dy dx  <\infty
\end{equation}
\end{lemma}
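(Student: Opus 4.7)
My plan is to bound the integrand pointwise by a path sum of $|F|^{d+\alpha}$, integrate over $x$ using the multivariate ergodic theorem, and finish with a radial integral in $w=y-x$. For $x,y\in B_1$ with $|y-x|>2d/n$, the interpolation formula for $\hat f$ together with the closed-loop property of $F\in\admiss$ gives
\[
\bigl|g_n(y)-g_n(x)\bigr|\ \le\ \frac{c_d}{n}\sum_{\pi_{nx}\rightsquigarrow\pi_{ny}}|F(u,u+e)|,
\]
where the sum runs along the canonical axis-aligned path that first moves in direction $e_1$, then $e_2$, and so on, of length $N\le c_d n|y-x|$ (the contributions from the endpoint-cube interpolation weights are of the same form and absorbed into $c_d$). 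Jensen's inequality on the $N$ summands then yields
\[
\bigl|g_n(y)-g_n(x)\bigr|^{d+\alpha}\ \le\ \frac{c_d\,|y-x|^{d+\alpha-1}}{n}\sum_{\pi_{nx}\rightsquigarrow\pi_{ny}}|F(u,u+e)|^{d+\alpha}.
\]

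Substituting $w=y-x$ and integrating in $x$ first, I decompose the canonical path into its $d$ axis-aligned legs; leg $j$ consists of $\delta_j\le c n|w_j|$ edges of the form $(v+s_k,\,v+s_k+e_j)$, where $v=\pi_{nx}$ ranges over $B_n\cap\mathbb{Z}^d$ (as $x$ ranges over the corresponding unit cube) and $s_k$ is a fixed shift depending on $j$, $w$, and $k\in\{0,\ldots,\delta_j-1\}$. Taking the lemma's $\alpha$ small enough that $F\in L^{d+\alpha}(\mathbb{P})$, Corollary~\ref{erg} applied to $Y=|F(\cdot,e_j)|^{d+\alpha}$ bounds $\sum_{v\in B_n}|F(v+s_k,e_j)|^{d+\alpha}$ by $Cn^d$ almost surely for all large $n$, uniformly in the shift $s_k$. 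Summing over $k$, accounting for the $n^{-d}$ volume of each unit cube, and summing over the $d$ legs gives
\[
\int_{B_1}\sum_{\pi_{nx}\rightsquigarrow\pi_{n(x+w)}}|F(u,u+e)|^{d+\alpha}\,dx\ \le\ Cn|w|,
\]
so that $h_n(w):=\int_{B_1}|g_n(x+w)-g_n(x)|^{d+\alpha}\,dx\le C|w|^{d+\alpha}$, uniformly in $n$.

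Plugging this bound into the double integral and passing to polar coordinates yields
\[
C\int_{2d/n}^{2} r^{\,2d+\alpha-\gamma-1}\,dr,
\]
which remains bounded as $n\to\infty$ exactly when the radial exponent exceeds $-1$, i.e.\ $\gamma<2d+\alpha$. Combined with the lemma's hypothesis $\gamma>2d+\alpha-1$ and the GRR requirement $\gamma>2d$, this confines $\gamma$ to a nonempty interval that is also compatible with the upper bound from the preceding lemma, provided $\alpha$ is chosen small enough. The chief obstacle is the averaging step: the canonical path depends on $(x,w)$ and carries $w$-dependent shifts along each leg, so reducing $\int_{B_1}\sum_{\pi_{nx}\rightsquigarrow\pi_{n(x+w)}}|F|^{d+\alpha}\,dx$ to the clean bound $Cn|w|$ requires decomposing the path leg by leg and invoking Zygmund's multivariate ergodic theorem (via Corollary~\ref{erg}) once per leg, using translation invariance to absorb the shifts $s_k$.
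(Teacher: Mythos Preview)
Your overall strategy matches the paper's: bound $|g_n(y)-g_n(x)|^{d+\alpha}$ by $c\,|y-x|^{d+\alpha-1}/n$ times a canonical path sum of $|F|^{d+\alpha}$ via H\"older, control the averaged path sum with the multivariate ergodic theorem, and finish with a one-dimensional radial integral. The paper carries this out in lattice coordinates $i=\pi_{nx}$, $j=\pi_{ny}$ and then, for each fixed edge $(k,k+e_1)$, counts which pairs $(i,j)$ have canonical paths passing through it; your organization (fix $w=y-x$, integrate in $x$ first) is a bit cleaner and, as you observe, uses only the upper restriction $\gamma<2d+\alpha$ rather than the stated lower bound.

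The gap is precisely where you flag it. The shift $s_k$ on leg $j$ is \emph{not} a fixed function of $(j,w,k)$: for $i<j$ the $i$-th coordinate of the starting point of leg $j$ is $(\pi_{n(x+w)})_i$, and $(\pi_{n(x+w)})_i-(\pi_{nx})_i$ equals $\lfloor nw_i\rfloor$ or $\lfloor nw_i\rfloor+1$ depending on the fractional part of $(nx)_i$; so both $s_k$ and the leg length $\delta_j$ wobble with $x$. Moreover, ``translation invariance'' only equates distributions and does not by itself give an almost-sure bound, for a fixed $\omega$, uniform over shifts of size $O(n)$. The clean repair is to swap the summations the other way: for fixed $w$, the Lebesgue measure of $\{x\in B_1:(z,z+e_j)\ \text{lies on the canonical path from }\pi_{nx}\text{ to }\pi_{n(x+w)}\}$ is at most $n^{-(d-1)}\bigl(|w_j|+O(1/n)\bigr)$, whence
\[
\int_{B_1}\sum_{\pi_{nx}\rightsquigarrow\pi_{n(x+w)}}|F|^{d+\alpha}\,dx\ \le\ C\,n^{1-d}\sum_{j=1}^{d}\bigl(|w_j|+O(1/n)\bigr)\sum_{|z|\le cn}|F(z,e_j)|^{d+\alpha},
\]
and a \emph{single} application of Corollary~\ref{erg} to the enlarged box $B_{cn}$ bounds the inner sum by $C'n^{d}$ for all large $n$, giving your claimed $Cn|w|$ (the $O(1/n)$ is absorbed since $n|w|>2d$). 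After this, your radial integral goes through unchanged. This edge-by-edge accounting is exactly what the paper does in its lattice variables, just packaged more directly.
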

%-----------------------------------------------------------------------------------------------------------------------------------
\begin{proof}
Choose $\alpha>0$ so that $F \in L^{d+\alpha}(\mathbb{P})$.
\begin{align*}
 \int_{B_1} & \int_{B^c_{2d/n}(x) \cap B_1}  \frac{|g_n(y)-g_n(x)|^{d+\alpha}}{|y-x|^{\gamma}} dy dx\\
 &= \frac{1}{n^{d+\alpha}}\int_{B_1} \int_{B^c_{2d/n}(x) \cap B_1}  \frac{|\hat{f}(ny)-\hat{f}(nx)|^{d+\alpha}}{|y-x|^{\gamma}} dy dx
\end{align*}
Making the change of variables $t=ny$ and $s=nx$ we have,
$$
\frac{1}{n^{3d+\alpha-\gamma}} \int_{B_n} \int_{B^c_{2d}(s) \cap B_n}  \frac{|\hat{f}(t)-\hat{f}(s)|^{d+\alpha}}{|t-s|^{\gamma}} dt ds
$$
We break up each $d$-dimensional integral into a sum of integrals over $d$-dimensional cubes to arrive at
\begin{equation} \label{cubes}
\leq  \frac{1}{n^{3d+\alpha-\gamma}}  \sum_{\substack{|j-i| > 2d \\ |i| \leq n \\ |j| \leq n}} \int_{i_1}^{i_1 +1} ds_1 \cdots  \int_{i_d}^{i_d +1} ds_d \int_{j_1}^{j_1 +1} dt_1 \cdots  \int_{j_d}^{j_d +1} dt_d \, \frac{|\hat{f}(t)-\hat{f}(s)|^{d+\alpha}}{|t-s|^{\gamma}} 
\end{equation}
For each pair $i,j \in \mathbb{Z}^d$ we are integrating $s$ over one cube and $t$ over another.  Since $|j-i|$ overestimates $|t-s|$ by at most $2d$ we have,
\begin{align*}
|j-i|& \leq |t-s| + 2d \\
& \leq |t-s|+2d|t-s| \\
& \leq 3d|t-s| \\
\frac{1}{3d} |j-i| & \leq |t-s|
\end{align*} 
Hence the left hand side of (\ref{grr2}) is bounded from above by
\begin{equation} \label{mint}
\frac{(3d)^\gamma}{n^{3d+\alpha-\gamma}}  \sum_{\substack{|j-i| > 2d \\ |i| \leq n, |j| \leq n}} \int_{i_1}^{i_1 +1} ds_1 \cdots  \int_{i_d}^{i_d +1} ds_d \int_{j_1}^{j_1 +1} dt_1 \cdots  \int_{j_d}^{j_d +1} dt_d \frac{|\hat{f}(t)-\hat{f}(s)|^{d+\alpha}}{|j-i|^{\gamma}} 
\end{equation}
We write 
\begin{align*}
\hat{f}(t)-\hat{f}(s)&=f(\pi_t)-f(\pi_s)\\
&+\sum_{\eta \in \{0,1\}^d} a_t(\eta,t)(f(\pi_t+\eta)-f(\pi_t))\\
&-\sum_{\eta \in \{0,1\}^d} a_t(\eta,s)(f(\pi_s+\eta)-f(\pi_s))
\end{align*}
Since $0 \leq a_t \leq 1$, Holder's inequality gives
\begin{align*}
|\hat{f}(t)-\hat{f}(s)|^{d+\alpha} &\leq  c_0 |\pi_t-\pi_s|^{d+\alpha-1}\sum_{\pi_s \rightsquigarrow \pi_t} |F|^{d+\alpha}\\
&+c_0 \sum_{\eta \in \{0,1\}^d} \left |(f(\pi_t+\eta)-f(\pi_t) \right|^{d+\alpha} \\
&+c_0 \sum_{\eta \in \{0,1\}^d} \left | (f(\pi_s +\eta)-f(\pi_s) \right| |^{d+\alpha}
\end{align*}
Since the integrand in (\ref{mint}) is symmetric in $s$ and $t$  and constant on each $d$-cube we  obtain after another application of Holder's inequality,
\begin{align*}
&\frac{1}{n^{3d+\alpha-\gamma}}  \iint_{\substack{|t-s| > 2d \\ |s| \leq n, |t| \leq n}} \frac{|\hat{f}(t)-\hat{f}(s)|^{d+\alpha}}{|t-s|^{\gamma}} dt ds \\
&\leq \frac{c_1}{n^{3d+\alpha-\gamma}}  \sum_{\substack{i,j \in [-n,n]^d \cap \mathbb{Z}^d \\i \neq j}} \left[ |j-i|^{d+\alpha-1}\frac{\sum_{i \rightsquigarrow j} |F|^{d+\alpha}}{|j-i|^\gamma} + \frac{2\sum_{\eta \in \{0,1\}^d} \sum_{j \rightsquigarrow j+\eta}|F|^{d+\alpha}}{|j-i|^\gamma} \right ]
\end{align*}
Consider the second term in the sum
\begin{align} \label{grrs2}
\frac{c_1}{n^{3d+\alpha-\gamma}}  &\sum_{\substack{i,j \in [-n,n]^d \cap \mathbb{Z}^d \\i \neq j}}   \frac{2\sum_{\eta \in \{0,1\}^d} \sum_{j \rightsquigarrow j+\eta}|F|^{d+\alpha}}{|j-i|^\gamma}\\
&  \leq \frac{c_1}{n^{3d+\alpha-\gamma}}  \sum_{\substack{i,j \in [-n,n]^d \cap \mathbb{Z}^d \\i \neq j}} \sum_{e \in U} \frac{|F(j,j+e)|^{d+\alpha}}{|j-i|^\gamma}
\end{align}
Consider the coefficient of an arbitrary term in the sum (\ref{grrs2}) of $|F(j,j+e_1)|^{d+\alpha}$.
Which is certainly smaller than
\begin{align*}
\frac{c_1}{n^{3d+\alpha-\gamma}}& \sum_{i \in \mathbb{Z}^d \setminus 0} \frac{1}{|j-i|^\gamma} \\
&=\frac{c}{n^{3d+\alpha-\gamma}}\sum_{i \in \mathbb{Z}^d \setminus 0} \frac{1}{|i|^\gamma} \\
&\leq \frac{c_2}{n^{3d+\alpha-\gamma}}
\end{align*}
Provide $\gamma > 1$. Therefore, as $n$ tends to $\infty$ the right hand side of inequality (\ref{grrs2}) will converge if for any $e \in U$, the following sum converges
$$
\frac{c_2}{n^{3d+\alpha-\gamma}} \sum_{j \in \mathbb{Z}^d} |F(j,j+e)|^{d+\alpha}
$$ 
which converges by Corollary \ref{erg} for $\gamma \leq 2d + \alpha$.
As for the first term,
$$
\frac{c_0}{n^{3d+\alpha-\gamma}} \sum_{\substack{i,j \in [-n,n]^d \cap \mathbb{Z}^d \\i \neq j}} \frac{\sum_{i\rightsquigarrow j} |F|^{d+\alpha}}{|j-i|^{\gamma-(d-1)-\alpha}}
$$
Again  consider the coefficient of the term $|F(k,k+e_1)|^{d+\alpha}$. This term is only included in sums where $i'=k'$ with $i=(i_0,i')$. To see this observe that since the canonical path from $i$ to $j$ starts with a move in the $e_1$ direction, a path starting at $i$ will not cross $(k,k+e_1)$ unless $k'=i'$. Thus the coefficient is
\begin{align*}
\frac{c_0}{n^{3d+\alpha-\gamma}} &\sum_{i_0} \sum_{j_0} \sum_{j'} \frac{1}{|j-i|^{\gamma-(d-1)-\alpha}} \\
&\leq
\frac{c_1}{n^{3d+\alpha-\gamma}} \sum_{i_0} \sum_{j_0} \sum_{j'} \frac{1}{|j_0-i_0|^{\gamma-(d-1)-\alpha}+|j'-i'|^{\gamma-(d-1)-\alpha}}
\end{align*}
Where the sums here and in what follows are taken over $i \neq j, |i|,|j| \leq n$ Let $a:= |j_0 - i_0|^{\gamma-(d-1)-\alpha}$, we need to calculate
$$
\frac{c_1}{n^{3d+\alpha-\gamma}} \sum_{j'} \frac{1}{a+|j'-i'|^{\gamma-(d-1)-\alpha}}=\frac{c_1}{n^{3d+\alpha-\gamma}} \sum_{j'} \frac{1}{a+|j'|^{\gamma-(d-1)-\alpha}}
$$
This sum converges iff $\gamma - (d-1) - \alpha > d-1$, that is if $\gamma > 2d+ \alpha -2$ which is assumed to be true. Indeed a straight forward calculus computation shows
$$
\sum_{j'} \frac{1}{a+|j'|^{\gamma-(d-1)-\alpha}} \leq c_2 a^{\frac{d}{\gamma -(d-1)-\alpha}-1}
$$
Substituting in the value of $a$ we have
$$
\sum_{j'} \frac{1}{a+|j'|^{\gamma-(d-1)-\alpha}} \leq c_2 |j_0-i_0|^{2d-\gamma +\alpha -1}
$$
Hence,
\begin{align*}
\frac{c_0}{n^{3d+\alpha-\gamma}} \sum_{i_0} \sum_{j_0} \sum_{j'} \frac{1}{|j-i|^{\gamma-(d-1)-\alpha}} &\leq \frac{c_2}{n^{3d+\alpha-\gamma}} \sum_{i_0} \sum_{j_0}  |j_0-i_0|^{2d-\gamma +\alpha -1}\\
& \leq \frac{c_3}{n^{3d+\alpha-\gamma}} n^{2d-\gamma -\alpha}\\
& \leq \frac {c_3}{n^d}
\end{align*}
Finally,
$$
\frac{c_0}{n^{3d+\alpha-\gamma}} \sum_{i} \sum_{j} \frac{|f(j)-f(i)|^{d+\alpha}}{|j-i|^\gamma} \leq \frac {c_3}{n^d} \sum_{e \in U} \sum_{k \in \mathbb{Z}^d} |F(k,k+e|^{d+\alpha}
$$
which converges again by an application of Corollary \ref{erg}.
\end{proof}
%-----------------------------------------------------------------------------------------------------------------------------------
\begin{lemma}\label{equi}
The family of functions $\{g_n\}$ is equicontinous.
\end{lemma}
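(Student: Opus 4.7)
The plan is to combine the two preceding lemmas into a single integral bound on $B_1\times B_1$ that is uniform in $n$, and then to apply the Garsia--Rodemich--Rumsey theorem (Theorem~\ref{grr}) to extract a uniform H\"older estimate, which is precisely equicontinuity. First I would fix $\alpha>0$ with $F\in L^{d+\alpha}(\mathbb{P})$ and choose a single exponent $\beta$ with $0<\beta<\min(\alpha,1)$. The first preceding lemma then controls the portion of the double integral over $|x-y|\le 2d/n$ with integrand exponent $d+\beta$ whenever $\gamma<2d+\beta$; the second lemma, whose proof uses only $F\in L^{d+\alpha}$ and hence equally $F\in L^{d+\beta}$, can be rerun verbatim with $\beta$ in place of $\alpha$ to control the portion over $|x-y|>2d/n$ with the same exponent $d+\beta$ whenever $\gamma>2d+\beta-1$. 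Because $\beta<1$, any $\gamma$ in the nonempty window $(2d,\,2d+\beta)$ satisfies both constraints and also the GRR requirement $\gamma>2d$.

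Fix such a $\gamma$. Adding the two bounds, for $\mathbb{P}$-almost every $\omega$ there is a constant $c_0(\omega)<\infty$ with
$$
\int_{B_1}\int_{B_1}\frac{|g_n(x)-g_n(y)|^{d+\beta}}{|x-y|^\gamma}\,dy\,dx \ \le\ c_0(\omega)\qquad\text{for every }n.
$$
Each $g_n$ is continuous on $\mathbb{R}^d$ because $\hat f$ is defined by multilinear interpolation of $f$ on unit cubes, so Theorem~\ref{grr} applies and yields a constant $c_1(\omega)$, depending on $c_0(\omega)$ and $d$ but not on $n$, such that
$$
|g_n(x)-g_n(y)|\ \le\ c_1(\omega)\,|x-y|^{(\gamma-2d)/(d+\beta)}\qquad\text{for all }x,y\in B_1.
$$
Since $(\gamma-2d)/(d+\beta)$ is a fixed positive number and $c_1(\omega)$ does not depend on $n$, the family $\{g_n\}$ is uniformly H\"older on $B_1$, hence equicontinuous there.

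The only genuine obstacle is the bookkeeping with exponents: the two preceding lemmas are stated with different powers of the increment, and harmonizing them requires reading the second lemma's proof with $\beta$ in place of $\alpha$ and verifying that the intersection of all constraints on $\gamma$ is nonempty. Once that is done, the rest is a direct plug-in of GRR. Equicontinuity on an arbitrary bounded ball $B_R$, which is what is eventually needed to show $g_n\to 0$ uniformly on bounded sets, follows by running the same argument with $B_1$ replaced by $B_R$ throughout: the constants depend on $R$ and on $\omega$, but their independence from $n$ is preserved.
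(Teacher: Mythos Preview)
Your proposal is correct and follows essentially the same route as the paper: combine the two preceding integral bounds into a single uniform-in-$n$ estimate on the GRR integral, then invoke Theorem~\ref{grr} with some $\gamma$ in the window $(2d,\,2d+\beta)$ to obtain a H\"older bound with constant independent of $n$. Your explicit reconciliation of the exponents $d+\beta$ and $d+\alpha$ in the two lemmas (rerunning the second lemma with $\beta$ in place of $\alpha$) is in fact more careful than the paper's own presentation, which silently passes to exponent $d+\beta$ throughout and then chooses $2d<\gamma<2d+\alpha/2$.
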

%-----------------------------------------------------------------------------------------------------------------------------------
\begin{proof}
The previous two lemmas imply that there is an $\alpha >0$ such that with $\beta=\alpha/2$,
$$
\sup_n \iint_{|y-x| \leq 1} \frac{|g_n(y)-g_n(x)|^{d+\beta}}{|y-x|^{\gamma}} dy dx < B
$$ for $\gamma < 2d + \beta$, $\beta=\alpha/2$, and $B$ a constant. By theorem \ref{grr}
\begin{align*}
|g_n(y)-g_n(x)|& \leq  c |x-y|^{\frac{\gamma-2d}{d+\beta}}
\end{align*} 
 Provided $\gamma > 2d$ and $x,y$ are in the unit $L_1$ ball. The lemma is proved by choosing $2d < \gamma < 2d + \alpha/2$.
\end{proof}
\section{Proof of Theorem \ref{fon}}
\begin{lemma} \label{intab}
The sequence $\{g_n\}$ has a subsequence that converges uniformly on compacts to a function $g$. (Indeed any subsequence has a further subsequence which is convergent). We denote the subsequence by $\{g_n\}$ as well.  Additionally, for all $b,a \in \mathbb{R}^d$,
$$
\int_{a_d}^{b_d} \cdots \int_{a_2}^{b_2} g(b_1,y_2, \ldots, y_d)-g(a_1,y_2, \ldots, y_d)dy_2 \cdots dy_d =0
$$
\end{lemma}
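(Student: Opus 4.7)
The plan is to get the subsequence from Arzelà--Ascoli and then to identify the integral as a multivariate ergodic average of $F$, which vanishes because $F \in \admiss$ is mean zero.

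For compactness, Lemma~\ref{equi} gives equicontinuity of $\{g_n\}$ on every bounded set, and $g_n(0) = f(0)/n = 0$ for every $n$ since $f(0) = \sum_{0 \rightsquigarrow 0} F = 0$ (the empty path, or any closed loop from $0$ to $0$, by Remark~\ref{cl}). Equicontinuity together with boundedness at one point gives pointwise boundedness on every bounded set, so Arzelà--Ascoli extracts a uniformly convergent subsequence on each ball $B_R$; a diagonal extraction then produces a subsequence converging uniformly on compacts to a continuous limit $g$. Applying the same argument inside any initial subsequence yields the parenthetical strengthening.

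For the integral identity, fix the box $R = \prod_{i=1}^d [a_i,b_i]$ and let
$$
J_n := \int_{a_d}^{b_d} \!\cdots \!\int_{a_2}^{b_2} \bigl[g_n(b_1,y_2,\ldots,y_d) - g_n(a_1,y_2,\ldots,y_d)\bigr]\, dy_2 \cdots dy_d.
$$
By uniform convergence of $g_n \to g$ on $R$, the desired integral equals $\lim_n J_n$, so it suffices to show $J_n \to 0$. Change variables $z_i = ny_i$ and expand $\hat f$ via the multilinear interpolation formula; integrating the interpolation weights over unit transverse cubes, $J_n$ reduces, up to a boundary error treated below, to an average of differences $f(\lfloor nb_1 \rfloor, k') - f(\lfloor na_1 \rfloor, k')$ over $k' = (k_2,\ldots,k_d) \in \mathbb{Z}^{d-1}$ in the transverse box. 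The closed loop property (Remark~\ref{cl}) telescopes each such difference along the $\unit_1$-axis:
$$
f(\lfloor nb_1 \rfloor, k') - f(\lfloor na_1 \rfloor, k') = \sum_{j = \lfloor na_1 \rfloor}^{\lfloor nb_1 \rfloor - 1} F\bigl((j,k'),(j+1,k')\bigr),
$$
and so $J_n$ equals, modulo the boundary correction,
$$
\frac{1}{n^d} \sum_{j = \lfloor na_1 \rfloor}^{\lfloor nb_1 \rfloor - 1} \sum_{k_2 = \lfloor na_2 \rfloor}^{\lfloor nb_2 \rfloor - 1} \cdots \sum_{k_d = \lfloor na_d \rfloor}^{\lfloor nb_d \rfloor - 1} F\bigl(\mpt_{(j,k_2,\ldots,k_d)}\omega,\unit_1\bigr).
$$
Splitting the summation domain into its sign quadrants and applying Corollary~\ref{metcor} on each, the limit is $(b_1 - a_1)(b_2 - a_2) \cdots (b_d - a_d)\,\mathbb{E}[F(\omega,\unit_1)]$, which vanishes since $F$ is mean zero.

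The principal obstacle is the bookkeeping: tracking the $2^d$ pieces coming from expansion of the multilinear interpolation (in the transverse directions, and, when $na_1$ or $nb_1$ is not an integer, in the $\unit_1$-direction as well), and controlling the boundary error from partial unit cubes near $\partial R$. All $2^d$ pieces contribute ergodic averages with shifted indices but the same mean-zero limit, so no cancellation argument is needed; the boundary error is bounded by $(c/n^d)$ times a sum of $|F|$-values over $O(n^{d-1})$ transverse cubes and $O(n)$ axial steps each, and is therefore $O(1/n)$ almost surely by Corollary~\ref{erg} applied to $|F|$.
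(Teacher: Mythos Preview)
Your approach is the same as the paper's in spirit: Arzel\`a--Ascoli via Lemma~\ref{equi}, then identification of the integral with a multivariate ergodic average of $F(\cdot,\unit_1)$, which vanishes by the mean-zero condition. The paper executes the second step in the reverse direction, which turns out to be cleaner. It first reduces to boxes with a corner at the origin, then starts from the ergodic average $\frac{1}{n^d}\sum_{i_1,\ldots,i_d} T_{e_1}^{i_1}\cdots T_{e_d}^{i_d}F(\omega,\unit_1)$ (which is zero in the limit by Corollary~\ref{metcor}), rewrites it \emph{exactly} as a Riemann sum $\frac{1}{n^{d-1}}\sum_{i_2,\ldots,i_d}\bigl[g_n(\cdot)-g_n(\cdot)\bigr]$, and then passes to the integral of $g$ via the triangle inequality $\bigl|\tfrac{1}{n^{d-1}}\sum(g_n-g)\bigr|+\bigl|\tfrac{1}{n^{d-1}}\sum g-\int g\bigr|$, using uniform convergence for the first term and ordinary Riemann-sum convergence for the second. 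Because the ergodic sum equals a Riemann sum of $g_n$ on the nose, there is no interpolation bookkeeping and no separate boundary error to control.

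Your route is valid, but the boundary-error justification as written does not go through. The phrase ``$O(n^{d-1})$ transverse cubes and $O(n)$ axial steps each'' yields $O(n^d)$ terms, which after dividing by $n^d$ gives $O(1)$, not $O(1/n)$; and if you meant $O(n^{d-2})$ boundary cubes, the resulting $|F|$-sum lives on a $(d-1)$-dimensional slab, to which Corollary~\ref{erg} does not apply (it requires all side lengths to tend to infinity, and you only have joint ergodicity of the full family $\{T_e\}$, not of any subfamily). The clean fix is to stay in $g_n$-coordinates: the boundary region in $y'$ has volume $O(1/n)$, and by Lemma~\ref{equi} together with $g_n(0)=0$ the functions $g_n$ are uniformly bounded on the box, so the boundary contribution is $O(1/n)$ directly. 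Equivalently, replace the integral $J_n$ by the Riemann sum at the mesh points $k'/n$ with an error controlled by the uniform modulus of continuity of $\{g_n\}$; this is exactly what the paper's reverse-direction argument is doing implicitly.
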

\begin{proof}
The existence of the convergent subsequence follows from Lemma \ref{equi}. For the second assertion, since any $d$-dimensional rectangle can by created by adding and subtracting rectangles with a  corner at the origin it will suffice to prove,
$$
\int_0^{a_d} \cdots \int_0^{a_2} g(a_1,y_2, \ldots, y_d)-g(0,y_2, \ldots, y_d)dy_2 \cdots dy_d =0
$$
Consider the sum,
\begin{equation*}
\frac{1}{(\lfloor a_1 n \rfloor \lfloor a_2 n \rfloor \cdots \lfloor a_d n \rfloor) } \sum_{i_d =0}^{\lfloor a_d n \rfloor -1} \cdots \sum_{i_1 =0}^{\lfloor a_1 n \rfloor -1} T_{e_1}^{i_1} \cdots T_{e_d}^{i_d} F(\omega,e_1)
\end{equation*}
Since $\mathbb{E}[F(\omega,e_1)]=0$ this average tends to 0 as $n \rightarrow \infty$ by Corollary \ref{metcor}  provided that $F$ is in $L \log^{d-1} L (\mathbb{P})$, which is true since we are assuming that $\mathbb{E}[|F|^{d+\alpha}] < \infty$. It follows immediately that
\begin{equation} \label{avg}
\lim_{n \rightarrow \infty} \frac{1}{n^d} \sum_{i_d =0}^{\lfloor a_d n \rfloor -1} \cdots \sum_{i_1 =0}^{\lfloor a_1 n \rfloor -1} T_{e_1}^{i_1} \cdots T_{e_d}^{i_d} F(\omega,e_1)=0
\end{equation}
On the other hand, changing notation shows that 
$$
\frac{1}{n^d} \sum_{i_d =0}^{\lfloor a_d n \rfloor -1} \cdots \sum_{i_1 =0}^{\lfloor a_1 n \rfloor -1} T_{e_1}^{i_1} \cdots T_{e_d}^{i_d} F(\omega,e_1)
$$
is equivalent to
\begin{align*}
\frac{1}{n^d}& \sum_{i_d =0}^{\lfloor a_d n \rfloor -1} \cdots \sum_{i_2 =0}^{\lfloor a_2 n \rfloor -1}f((\lfloor a_1 n\rfloor -1)e_1+i_2 e_2+\cdots +i_d e_d)-f(i_2 e_2+\cdots +i_d e_d)\\
&=\frac{1}{n^{d-1}} \sum_{i_d =0}^{\lfloor a_d n \rfloor -1} \cdots \sum_{i_2 =0}^{\lfloor a_2 n \rfloor -1} \Big [g_n(e_1(\lfloor a_1 n\rfloor -1)/n +e_2 i_2/n +\cdots +e_d i_d/n)\\
&-g_n(e_2 i_2/n +\cdots +e_d i_d/n)\Big ]
\intertext{by the definintion of $g_n$.}
\end{align*}
We want to show that the above converges to the desired integral so we consider,
\begin{align*}
&\left | \frac{1}{n^{d-1}} \sum (g_n-g) +\frac{1}{n^{d-1}} \sum g - \int g \right | \\
&\leq \left | \frac{1}{n^{d-1}} \sum (g_n-g) \right |+\left |\frac{1}{n^{d-1}} \sum g - \int g \right | 
\end{align*}
The second term converges to zero since $\frac{1}{n^{d-1}} \sum g_n$ is a Reimann sum. As $n \rightarrow \infty$ the first summand tends to zero since $|g_n-g|$ converges uniformly to zero.
\end{proof}
We have assembled all of the ingredients needed to prove the main theorem.
\begin{proof}[Proof of Theorem \ref{fon}]
The uniform limit of a sequence of continuous functions $g$ is continuous. Since $a$ and $b$ are abitrary Lemma \ref{intab} shows that the function $g(x,y_2, \ldots, y_d)$ is a constant function of $x$. By analogous reasoning we see that $g$ is in fact constant in every coordinate, that is, $g$ is a constant function. Since $g(0)$ is 0, $g$ must be identically 0. Hence all convergent subsequences converge to 0 and therefore $g_n $ converges uniformly to 0 on the unit $d$-dimensional cube. In other words, for any $\epsilon >0$ and $n$ large enough we have $|g_n(s)| \leq \epsilon$ for $s \in [0,1]^d$. So that $|\hat{f}(ns)/n|\leq \epsilon$. To conclude the proof we need to show that for any $\epsilon >0$ there is an $N$ such that for $n \geq N$,
$$
\sup_{\substack{|z| \leq n \\ z \in \mathbb{Z}^d}} \left |\frac{f(z)}{n} \right |\leq \epsilon
$$
Let $z$ be arbitrary such that $|z| \leq n$, then there is a vector $s \in [0,1]^d$ such that $z=ns$. By what we have shown so far we now have
$|\hat{f}(z)/n|\leq \epsilon$ and since $z \in \mathbb{Z}^d$ we can write $|f(z)/n|\leq \epsilon$. Since $z$ is arbitrary the proof is concluded.
\end{proof}
 
\chapter{Logarithmic Moment Generating Function}
In this chapter we prove Theorem \ref{a}. Recall the definition of the logarithmic moment generating function:
$$
\Lambda(\lambda) = \inf_{F \in \admiss} \esup_\omega \log \sum_{e \in U} p(\omega ,e) \base^{\lb \lambda , e \rb+F(\omega,e)}
$$
with the $\esup$ taken with respect to the measure $\mathbb{P}$. Theorem \ref{a} asserts that $\Lambda(\lambda)$ is the logarithmic moment generating function - i.e., the limit as $n$ tends to $\infty$ of $\frac{1}{n} \log E^{P_\omega}[\base^{\lb \lambda, Xn\rb} ]$. To prove this assertation we first derive a lower bound $\Gamma(\lambda)$ for $\frac{1}{n} \log E^{P_\omega}[\base^{\lb \lambda, Xn\rb} ]$  using standard methods from the theory of large deviations. Then we derive the upper bound which turns out to be $\Lambda(\lambda)$; this is where we make crucial use of Theorem \ref{fon}. Finally we show that $\Lambda(\lambda) \leq \Gamma(\lambda)$ by establishing the existence of a family of functions $\{F_\epsilon\} \in \admiss$ so that for any $\epsilon > 0$
$$
\esup_\omega \log \sum_{e \in U} p(\omega,e) \base^{\lb \lambda, e \rb +F_{\epsilon} (\omega,e)} \leq \Gamma(\lambda)+\epsilon
$$
Then since,
\begin{align*}
 \inf_{F \in \admiss} \esup_\omega \log \sum_{e \in U} p(\omega,e) \base^{\lb \lambda, e \rb +F (\omega,e)} & \leq \inf_\epsilon \esup_\omega \log \sum_{e \in U} p(\omega,e) \base^{\lb \lambda, e \rb +F_{\epsilon} (\omega,e)}
 \intertext{and}
 \inf_\epsilon \Gamma(\lambda) + \epsilon &= \Gamma(\lambda)
\end{align*}
we will have our result.
\section{Lower Bound}
The lower bound is established in a straightforward manner. After changing measure we take the supremum over all pairs of transition functions $q$ and densities $\phi$ where $\phi$ is an ergodic, invariant density for the Markov chain $q$. In order that we can take this supremum over arbitrary pairs $(q,\phi)$ we introduce a function $h$ into the objective function which will force the objective funtion to be negative infinity if $\phi$ is not an ergodic, invariant density for $q$. The result is a lower bound of $\Gamma(\lambda)$ which we define presently.
\begin{definition}
$\Gamma(\lambda)$ is defined as:
\begin{equation}
 \sup_{(q,\phi)} \inf_h  \int   \left(\sum_{e \in U} \lb \lambda ,e \rb  - \log \frac{q(\omega, e)}{p(\omega,e)}+h(\omega)-h(\mpt_e \omega)\right)q(\omega,e) \phi(\omega) d \mathbb{P}
\end{equation}
where  $q$ is a transition function, $\phi$ a probability density and $h$ a bounded measurable function.
\end{definition}
We obtain the lower bound by a standard change of measure argument.  We write $E^{P_\omega}\left[ \base^{\lb \lambda, X_n\rb}\right]=E^{Q_\omega}\left[ \base^{\lb \lambda, X_n \rb}\frac{dP_\omega}{dQ_\omega}\right]$ for a Markov chain $Q_\omega$.  Then assuming that the  measure $Q_\omega$ is absolutely continuous with respect to $P_\omega$ and is a stationary, ergodic Markov chain on the space of environments we use first the ergodic theorem and then the law of large numbers for RWRE to take the limit as $n$ tends to $\infty$. Using Jensen's inequality and taking the supremum over all such measures  $Q_\omega$ will yield a lower bound. In order to prove that $\Lambda(\lambda) \leq \Gamma(\lambda) $ below we will need to replace this expression for the lower bound with one where the supremum is taken over all pairs $q$ and $\phi$ (not only those where $\phi$ is the ergodic, invariant density for the Markov chain with transition function $q$). To do this we incorporate the condition for a stationary ergodic density into the expression for the lower bound. We first state the law of large numbers for the RWRE. The proof is a straightforward generalization of that given by Sznitman in \cite{sznitman:02} pages 14-15.
\begin{theorem} \label{slln}
Suppose $X_n$ is a RWRE with transition function $p$ and probability measure on the environmnet $\mathbb{P}$. If  $\mathbb{P}'$ is an invariant probability measure for the Markov chain then $P_\omega$-a.s.
$$
\lim_{n \rightarrow \infty} \frac{1}{n} \lb \lambda, X_n \rb =  \int \sum_{e \in U} \lb \lambda,e \rb p(\omega,e)  d\mathbb{P}'(\omega)
$$
\end{theorem}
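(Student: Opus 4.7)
The plan, following Sznitman's blueprint, is to decompose $\lb\lambda, X_n\rb$ into a martingale part (controlled by a bounded-increment SLLN) plus an additive functional of the environment chain seen from the particle (controlled by the Markov ergodic theorem under $\mathbb{P}'$).

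First I would introduce the environment viewed from the particle, $\bar{\omega}_k := \mpt_{X_k}\omega$, which under $P_\omega$ is a Markov chain on $\Omega$ whose transition kernel is $\pi(\omega, A) = \sum_{e\in U} p(\omega,e)\mathbf{1}_A(\mpt_e\omega)$. By the hypothesis, $\mathbb{P}'$ is invariant for $\pi$, and as the theorem as stated only has content once $\mathbb{P}'$ is ergodic for this kernel (otherwise the right-hand side should be a conditional expectation), I would read the assumption in that sense. Next define the bounded measurable drift
$$d(\omega) := \sum_{e \in U} \lb\lambda, e\rb\, p(\omega,e),$$
so that $|d(\omega)| \leq \max_{e\in U}|\lb\lambda,e\rb|$.

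Second, I would use the telescoping identity
$$\lb\lambda, X_n\rb = M_n + \sum_{k=0}^{n-1} d(\bar{\omega}_k), \qquad M_n := \sum_{k=1}^{n}\bigl(\lb\lambda, X_k-X_{k-1}\rb - d(\bar{\omega}_{k-1})\bigr),$$
and verify that $\{M_n\}$ is a $P_\omega$-martingale with respect to the natural filtration of the walk (since $E^{P_\omega}[\lb\lambda,X_k-X_{k-1}\rb \mid \mathcal{F}_{k-1}] = d(\bar{\omega}_{k-1})$ by the Markov property). The increments are uniformly bounded by $2\max_{e\in U}|\lb\lambda,e\rb|$, so Azuma's inequality together with Borel--Cantelli gives $M_n/n \to 0$, $P_\omega$-a.s.

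Third, I would apply the Markov chain ergodic theorem to the bounded observable $d$ along $\{\bar{\omega}_k\}$: under the ergodic invariant measure $\mathbb{P}'$,
$$\frac{1}{n}\sum_{k=0}^{n-1} d(\bar{\omega}_k) \longrightarrow \int d(\omega)\, d\mathbb{P}'(\omega) = \int \sum_{e\in U}\lb\lambda, e\rb p(\omega,e)\, d\mathbb{P}'(\omega)$$
for $\mathbb{P}'$-a.e.\ starting $\omega$, $P_\omega$-almost surely. Combining with $M_n/n \to 0$ gives the claim. The main obstacle, as in Sznitman's one-dimensional argument, is not the decomposition (which is almost mechanical) but the ergodicity required to identify the Cesàro limit of $d(\bar{\omega}_k)$ with the space integral against $\mathbb{P}'$; in the multidimensional setting one has to carefully separate the ergodicity of the commuting family $\{\mpt_e\}$ on $(\Omega,\mathbb{P})$ from ergodicity of the non-reversible point-of-view-of-particle kernel on $(\Omega,\mathbb{P}')$, and verify that the exceptional set on which convergence fails is negligible with respect to whichever measure on environments is relevant when the lemma is applied downstream.
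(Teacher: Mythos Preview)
Your proposal is correct and follows exactly the approach the paper has in mind: the paper does not give its own proof but simply cites Sznitman's argument, and your martingale-plus-additive-functional decomposition with the ergodic theorem for the environment chain is precisely that argument. Your caveat about needing ergodicity (not just invariance) of $\mathbb{P}'$ is well taken and matches how the theorem is actually invoked downstream in the paper, where $\phi\,d\mathbb{P}$ is explicitly assumed ergodic for the $q$-chain.
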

The lower bound is proved as the following theorem,
\begin{theorem} \label{lowbdd}
\begin{equation}
\liminf_{n \rightarrow \infty} \frac{1}{n} \log E^{P_\omega}\left[ \base^{\lb \lambda, X_n \rb}\right] \geq \Gamma(\lambda)
\end{equation}
\end{theorem}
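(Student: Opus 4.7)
The plan is to obtain the lower bound by a standard change-of-measure argument combined with Jensen's inequality, then identify the resulting limit via Theorem~\ref{slln} and the ergodic theorem. The $\inf_h$ appearing in the definition of $\Gamma(\lambda)$ will enter only at the very end, as a Lagrange-multiplier device that converts a supremum over \emph{ergodic invariant} pairs $(q,\phi)$ into a supremum over \emph{arbitrary} pairs.

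Concretely, fix any transition function $q(\omega,e)$ absolutely continuous with respect to $p$ and any density $\phi$ that is ergodic and invariant for the environment Markov chain induced by $q$. Let $Q_\omega$ denote the quenched law of the walk with transitions $q$ started at $\omega$. The Radon-Nikodym derivative of $P_\omega$ with respect to $Q_\omega$ on paths of length $n$ is $\prod_{j=1}^n p(X_{j-1},X_j)/q(X_{j-1},X_j)$, so Jensen's inequality applied to $\log E^{Q_\omega}[\cdot] \geq E^{Q_\omega}[\log(\cdot)]$ yields
$$
\frac{1}{n}\log E^{P_\omega}\!\bigl[\base^{\lb \lambda, X_n\rb}\bigr]
\;\geq\; E^{Q_\omega}\!\left[\,\frac{\lb \lambda, X_n\rb}{n} \;-\; \frac{1}{n}\sum_{j=1}^n \log\frac{q(X_{j-1},X_j)}{p(X_{j-1},X_j)}\,\right].
$$
By Theorem~\ref{slln}, the first term converges to $\int \sum_{e} \lb \lambda,e\rb\, q(\omega,e)\phi(\omega)\,d\mathbb{P}$. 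The second is an ergodic average along the stationary ergodic environment process $\overline{\omega}_n$ under $\phi\,d\mathbb{P}$, and Birkhoff's theorem identifies its almost-sure limit as $\int \sum_{e}\log(q(\omega,e)/p(\omega,e))\,q(\omega,e)\phi(\omega)\,d\mathbb{P}$. Combining these gives the desired lower bound with the supremum taken over all pairs $(q,\phi)$ for which $\phi$ is ergodic and $q$-invariant.

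To recover the full formula for $\Gamma(\lambda)$, observe that for any bounded measurable $h$ the linear functional
$$
L_{q,\phi}(h) \;:=\; \int \sum_{e\in U}\bigl(h(\omega)-h(\mpt_e\omega)\bigr)\,q(\omega,e)\phi(\omega)\,d\mathbb{P}
$$
vanishes identically in $h$ precisely when $\phi$ is $q$-invariant (it is just the stationarity condition tested against $h$); otherwise it is nonzero for some $h$, and by linearity $\inf_h L_{q,\phi}(h) = -\infty$. Thus appending $L_{q,\phi}(h)$ to the integrand and taking $\inf_h$ enforces invariance implicitly, so one may enlarge the supremum to arbitrary pairs $(q,\phi)$ without changing its value; the ergodic decomposition of invariant measures together with linearity of the integrand in $\phi$ then shows the supremum is achieved on ergodic $\phi$.

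The main technical obstacle will be integrability and absolute continuity: one must ensure $\log(q/p) \in L^1(q\phi\,d\mathbb{P})$ so that Jensen and Birkhoff apply cleanly, and restrict to $q$ with $q(\omega,e)=0$ whenever $p(\omega,e)=0$ on the support of $\phi$. The standard route is to first establish the bound for $q$ with $q/p$ bounded away from $0$ and $\infty$, for which all integrability questions are trivial, and then recover the general sup by a monotone/truncation approximation—an approximating class which is still rich enough to saturate the supremum defining $\Gamma(\lambda)$.
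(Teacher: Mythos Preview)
Your proposal is correct and follows essentially the same route as the paper: change of measure to $Q_\omega$, Jensen, Theorem~\ref{slln} plus the ergodic theorem to evaluate the two limiting pieces, then the $\inf_h$ Lagrange-multiplier device to replace the supremum over ergodic-invariant pairs by one over arbitrary $(q,\phi)$. The only minor divergence is in justifying ergodicity: the paper asserts (citing Sznitman) that invariance of $\phi\,d\mathbb{P}$ already forces ergodicity, whereas you invoke the ergodic decomposition and linearity in $\phi$; both work, and your added remarks on integrability of $\log(q/p)$ are more careful than the paper itself.
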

\begin{proof}
For the change of measure we  use the explicit formula for the Radon Nikodym derivative of one Markov chain with respect to another, indeed
\begin{align*}
E^{P_\omega}\left[ \base^{\lb \lambda, X_n\rb}\right]&=E^{Q_\omega}\left[ \base^{\lb \lambda, X_n \rb}\frac{dP_\omega}{dQ_\omega}\right]\\
&=E^{Q_\omega}\left[\exp \left\{\lb \lambda, X_n \rb-\log \frac{q(X_0,X_1)\cdots q(X_{n-1},X_{n})}{p(X_0,X_1)\cdots p(X_{n-1},X_{n})}\right\}\right] \\
&=E^{Q_\omega}\left[\exp \left\{\lb \lambda, X_n \rb-\sum_{k=0}^{n-1}\log \frac{q(X_k,X_{k+1})}{p(X_k,X_{k+1})}\right\}\right] 
\end{align*} 
where $Q_\omega$ is the Markov chain with transition function $q$ and intial state $\omega$. By Jensen's inequaltity 
$$
\liminf_{n \rightarrow \infty} \frac{1}{n} \log E^{P_\omega}\left[ \base^{\lb \lambda, X_n \rb}\right] \geq \liminf_{n \rightarrow \infty} E^{Q_\omega}\left[\frac{1}{n} \lb \lambda, X_n \rb-\frac{1}{n}\sum_{k=0}^{n-1}\log \frac{q(X_k,X_{k+1})}{p(X_k,X_{k+1})}\right] 
$$
%-----------------------------------------------------------------------------------------------------------------------------------
First we consider
% \frac{1}{n}E^{Q_\omega}\left[\log\frac{dQ_\omega}{dP_\omega}\right] 
$$
 E^{Q_\omega}\left[\frac{1}{n}\sum_{k=0}^{n-1}\log \frac{q(X_k,X_{k+1})}{p(X_k,X_{k+1})}\right] 
 $$
By definition of $p$ and $q$ this expression can be written as,
$$ 
E^{Q_\omega}\left[\frac{1}{n}\sum_{k=0}^{n-1}\log \frac{q(\overline{\omega}_k,\overline{\omega}_{k+1})}{p(\overline{\omega}_k,\overline{\omega}_{k+1})}\right] 
$$
We restrict $\mathbb{Q}$ to be absolutely continuous with respect to $\mathbb{P}$ so that we have $d\mathbb{Q}=\phi(\omega)d\mathbb{P}$. We also take $\phi d \mathbb{P}$ to be an ergodic invariant measure for the Markov chain  $Q_\omega$, then by the tower property of conditional expectation and the ergodic theorem we have
\begin{align}\label{dqdp}
\liminf_{n \rightarrow \infty}E^{Q_\omega} & \left[\frac{1}{n}  \sum_{k=0}^{n-1}\log \frac{q(X_k,X_{k+1})}{p(X_k,X_{k+1})}\right] \\
&=\liminf_{n \rightarrow \infty} E^{Q_\omega} \left[\frac{1}{n}\sum_{k=0}^{n-1}  \sum_{e \in U} \log \frac{q(\overline{\omega}_k,\mpt_e \overline{\omega}_{k})}{p(\overline{\omega}_k,\mpt_e \overline{\omega}_{k})}q(\overline{\omega}_k,\mpt_e \overline{\omega}_{k})\right] \\
&=\int \sum_{e \in U} \log \frac{q(\omega,e)}{p(\omega,e)}q(\omega,e) \phi(\omega) d \mathbb{P}(\omega) \label{dqdp}
\end{align}
To obtain a lower bound it remains to evaluate
$$
\lim_{n \rightarrow \infty} E^{Q_\omega}\left[\frac{1}{n} \lb \lambda, X_n\rb \right]
$$
We let $A$ be the set of all pairs $(q,\phi)$ where $\phi$ is the ergodic, invariant density for the Markov chain with transition function $q$. For $(q,\phi)\in A$ we apply Theorem \ref{slln}, the strong law of large numbers. 
\begin{align} \label{sl}
\lim_{n \rightarrow \infty} E^{Q_\omega}\left[\frac{1}{n} \lb \lambda, X_n\rb \right]&= \int \sum_{e \in U} \lb \lambda,e \rb q(\omega,e)  \phi(\omega)d\mathbb{P}(\omega)
\end{align}
Therefore combining the results  (\ref{dqdp}) and (\ref{sl}) we get
\begin{align} \label{lb1} 
\liminf_{n \rightarrow \infty} \frac{1}{n} \log E^{P_\omega}\left[ \base^{\lb \lambda, X_n \rb}\right] 
& \geq   \sup_{(q,\phi)\in A}  \int     \sum_{e \in U} \lb \lambda,e \rb  q(\omega,e)- \log \frac{q(\omega,e)}{p(\omega,e)}q(\omega,e)\phi d\mathbb{P}
\end{align}
The final step of the proof is to remove the restriction on $\phi$ and $q$ as explained above. The condition for $\phi(\omega)$ to be an invariant density for the chain is
\begin{align} \label{stat}
\int h(\omega) \phi d\mathbb{P}
&=\int \sum_{e \in U} h(\mpt_e \omega) q( \omega,e)\phi d\mathbb{P}
\end{align} 
for all bounded measurable functions $h$. It turns out that this condition guarantees ergodicity as well (see, e.g.,~\cite{sznitman:02}). Therefore if $(q,\phi) \notin A$ then 
\begin{equation} \label{idcon}
\inf_h \int \sum_{e \in U} (h(\omega)-h(\mpt_e \omega)) q( \omega,e)\phi d\mathbb{P}= -\infty
\end{equation}
This allows us to use (\ref{stat})  to decouple  $\phi$ and $q$ in (\ref{lb1}) and to take the supremum over all $\phi$ and $q$. The result is,
\begin{align*}
& \sup_{(q,\phi)\in A}  \int     \sum_{e \in U} \lb \lambda,e \rb  q(\omega,e)- \log \frac{q(\omega,e)}{p(\omega,e)}q(\omega,e)\phi d\mathbb{P}\\
&=\sup_{(q,\phi)} \inf_h  \int   \left(\sum_{e \in U} \lb \lambda ,e \rb  - \log \frac{q(\omega, e)}{p(\omega,e)}+h(\omega)-h(\mpt_e \omega)\right)q(\omega,e) \phi(\omega) d \mathbb{P} \\
\end{align*}
which is the expression for $\Gamma(\lambda)$ concluding the proof.
\end{proof}
%-----------------------------------------------------------------------------------------------------------------------------------
%-----------------------------------------------------------------------------------------------------------------------------------
%-----------------------------------------------------------------------------------------------------------------------------------
\section{Upper Bound}
To derive the upper bound for $1/n \log E^{P_\omega}\left [\base^{\lb \lambda, X_n \rb}\right ] $ we would like to bound the increments $E^{P_\omega}\left [\base^{\lb \lambda, X_n-X_{n-1} \rb} | X_{n-1}\right ] $ by  a constant, say $ \base^c$. Then using the tower property of conditional expectation to  iterate this inequality we could show that  $E^{P_\omega}\left [\base^{\lb \lambda, X_n \rb}\right ]$ is bounded above by $\base^{cn}$, and after taking the $\log$ and dividing by $n$ we would have an estimate for $1/n \log E^{P_\omega}\left [\base^{\lb \lambda, X_n \rb}\right ]$ of $c$. This program will not work quite so simply because we do not know how to arrive at the value of $c$; in fact since $c$ is the quantity we are trying to derive in the first place, we seem to have made no progress. However,  we can easily  obtain an upper bound for $E^{P_\omega} \left [\base^{\lb \lambda,X_n-X_{n-1}\rb+F(X_{n-1},X_n)}\Big |X_{n-1}\right]$ with the appropriate choice of $F$. Proposition \ref{fon} shows that $1/n \sum_{i=1}^n F(X_{n-1},X_n)$ is small (in the appropriate sense), allowing us to derive an upper bound. Written as a variational formula.  
\begin{theorem} \label{ub}
\begin{equation}
\limsup_{n \rightarrow \infty} \frac{1}{n} \log E^{P_\omega}\left [\base^{\lb \lambda, X_n \rb}\right ] \leq \Lambda(\lambda)
\end{equation}
\end{theorem}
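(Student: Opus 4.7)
The plan is to exploit the freedom to insert any $F \in \admiss$ inside the exponent and to absorb the resulting boundary term via Theorem \ref{fon}. Fix $F \in \admiss$ and set
$$
C(F) = \esup_\omega \log \sum_{e \in U} p(\omega,e)\base^{\lb \lambda, e\rb + F(\omega,e)}.
$$
Because $F$ satisfies the closed-loop condition, the telescoping sum $\sum_{i=0}^{n-1} F(X_i,X_{i+1})$ is path-independent and equals $f(X_n)-f(X_0)=f(X_n)$. Using also $\lb \lambda, X_n\rb = \sum_{i=0}^{n-1}\lb \lambda, X_{i+1}-X_i\rb$, I would write the trivial identity
$$
\base^{\lb \lambda, X_n\rb} = \base^{-f(X_n)}\exp\Big(\sum_{i=0}^{n-1}\big[\lb \lambda, X_{i+1}-X_i\rb + F(X_i,X_{i+1})\big]\Big).
$$

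The exponential factor is handled by iterated conditioning. The one-step conditional expectation given $X_0,\dots,X_{n-1}$ is
$$
\sum_{e \in U} p(\mpt_{X_{n-1}}\omega, e)\base^{\lb \lambda, e\rb + F(\mpt_{X_{n-1}}\omega,e)} \leq \base^{C(F)},
$$
and since $\mathbb{Z}^d$ is countable and each $\mpt_z$ preserves $\mathbb{P}$, this upper bound holds for $\mathbb{P}$-a.s. $\omega$ simultaneously at every translate $\mpt_z\omega$, $z \in \mathbb{Z}^d$. Iterating the tower property $n$ times yields
$$
E^{P_\omega}\Big[\exp\Big(\sum_{i=0}^{n-1}\big[\lb \lambda, X_{i+1}-X_i\rb + F(X_i,X_{i+1})\big]\Big)\Big] \leq \base^{nC(F)}.
$$

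Finally I would invoke Theorem \ref{fon} to dispose of $\base^{-f(X_n)}$. Given $\epsilon>0$, for all $n$ sufficiently large $\sup_{|z|\leq n,\, z \in \mathbb{Z}^d}|f(z)|/n \leq \epsilon$, and since the walk is nearest-neighbor we have $|X_n|\leq n$ deterministically, so $|f(X_n)| \leq n\epsilon$. Combining the two bounds gives $\tfrac{1}{n}\log E^{P_\omega}[\base^{\lb \lambda, X_n\rb}] \leq C(F)+\epsilon$; sending $n\to\infty$, then $\epsilon\to 0$, and infimizing over $F \in \admiss$ delivers the theorem. The only delicate step is the deterministic, uniform control of $f(X_n)$: the ergodic theorem alone is insufficient because the walk may reach any lattice point in $B_n$, and it is precisely Theorem \ref{fon} that supplies the required $o(n)$ bound uniformly in the endpoint.
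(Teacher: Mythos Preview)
Your proof is correct and follows essentially the same approach as the paper: the paper packages the iterated conditioning as a supermartingale lemma (Lemma \ref{smart}) for $S_n=\exp\{\lb \lambda,X_n\rb+\sum_{j=1}^n F(X_{j-1},X_j)-nK(F)\}$, then uses Theorem \ref{fon} exactly as you do to absorb the $\sum F$ boundary term. Your explicit identification $\sum_{i=0}^{n-1}F(X_i,X_{i+1})=f(X_n)$ and your remark that the $\esup$ bound transfers to every translate $\mpt_z\omega$ simultaneously are nice clarifications, but the argument is the same.
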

%-----------------------------------------------------------------------------------------------------------------------------------
\begin{definition} \label{defK}
For a function $F(\omega,e)$ we set 
$$
K(F):= \esup_\omega \log \sum_{e \in U} p(\omega,e) \base^{\lb \lambda,e \rb+F( \omega,e)}
$$
\end{definition}
Definition \ref{defK} is useful because for all functions $F(\omega,e)$ we have the obvious inequality
\begin{equation} \label{obv}
\sum_{e \in U} p(\omega,e) \base^{\lb \lambda,e \rb+F( \omega,e)} \leq \base^{K(F)}
\end{equation}
Using  inequality (\ref{obv}) gives,
\begin{align} \label{cond} 
E^{P_\omega} \left [\base^{\lb \lambda,X_n-X_{n-1}\rb+F(X_{n-1},X_n)}\Big |X_{n-1}\right]=\sum_{e \in U} p(\overline{\omega}_{n-1},e) \base^{\lb \lambda,e \rb+F( \overline{\omega}_{n-1},e)} \leq \base^{K(F)}
\end{align}
We can now perform the iteration mentioned above to obtain the dersired upper bound for $\exp\left\{\lb \lambda, X_n \rb+\sum_{j=1}^n F(X_{j-1},X_j)\right\}$. This is done in the following lemma.
\begin{lemma} \label{smart}
$S_n := \exp\left\{\lb \lambda, X_n \rb+\sum_{j=1}^n F(X_{j-1},X_j)-nK(F)\right\}$ is a supermartingale with respect to the sigma field $\sigma(X_0,X_1, \ldots, X_n)$.
\end{lemma}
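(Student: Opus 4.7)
The plan is to verify the supermartingale inequality by a direct one-step computation, using the Markov property to reduce the conditional expectation of the multiplicative increment to exactly the quantity controlled by inequality (\ref{obv}).

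First I would write $S_n$ as a product of $S_{n-1}$ and a single-step factor:
\begin{equation*}
S_n = S_{n-1}\cdot \exp\bigl\{\lb \lambda, X_n - X_{n-1}\rb + F(X_{n-1},X_n) - K(F)\bigr\}.
\end{equation*}
Since $S_{n-1}$ is $\sigma(X_0,\ldots,X_{n-1})$-measurable, it can be pulled out of the conditional expectation. The remaining factor depends only on the increment $X_n - X_{n-1}$ and on $\overline{\omega}_{n-1}$, so by the Markov property of $\{(X_n,\overline{\omega}_n)\}$ the conditional expectation equals
\begin{equation*}
\base^{-K(F)} \sum_{e \in U} p(\overline{\omega}_{n-1},e)\,\base^{\lb \lambda, e\rb + F(\overline{\omega}_{n-1},e)}.
\end{equation*}
By the inequality (\ref{cond}) (equivalently (\ref{obv}) applied pointwise at $\omega = \overline{\omega}_{n-1}$), this sum is bounded above by $\base^{K(F)}$, so the whole conditional expectation is at most $1$. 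Thus
\begin{equation*}
E^{P_\omega}\bigl[S_n\,\big|\,\sigma(X_0,\ldots,X_{n-1})\bigr] \leq S_{n-1},
\end{equation*}
which is the supermartingale inequality.

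The only minor subtlety is integrability: $S_n$ involves $F$, which need not be bounded, so I would handle this by iterating the one-step bound. Taking unconditional expectations and applying the tower property gives $E^{P_\omega}[S_n] \leq E^{P_\omega}[S_{n-1}] \leq \cdots \leq E^{P_\omega}[S_0] = 1$, so every $S_n$ is integrable and the supermartingale property is established. I do not expect any real obstacle here; the whole statement is essentially a packaging of the pointwise bound (\ref{obv}) into a martingale-style inequality that can later be iterated to control $E^{P_\omega}[\base^{\lb \lambda,X_n\rb}]$.
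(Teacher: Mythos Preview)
Your proof is correct and follows essentially the same route as the paper: factor $S_n = S_{n-1}\cdot\exp\{\lb\lambda,X_n-X_{n-1}\rb + F(X_{n-1},X_n) - K(F)\}$, use the Markov property to reduce the conditional expectation of the increment to the sum in (\ref{cond}), and apply the bound (\ref{obv}). Your explicit integrability check via iteration is a nice addition that the paper leaves implicit.
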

\begin{proof}
Since $X_n$ is Markov, using inequality (\ref{cond}) gives
\begin{align*}
E^{P_\omega} &\bigl[S_n \bigr | \sigma(X_0, \ldots, X_{n-1})] = E^{P_\omega}\bigl[S_n \bigr |  X_{n-1}]\\
&=E^{P_\omega}\left[\exp\Bigl\{\lb \lambda, X_n \rb+\sum_{j=1}^n F(X_{j-1},X_j)-nK\Bigr\} \Big | X_{n-1}\right]\\
&= E^{P_\omega}\left[S_{n-1}\exp\bigl\{ \lb \lambda, X_n-X_{n-1} \rb+F(X_{n-1},X_n)-K\bigr\} \Big | X_{n-1}\right]\\
&= \base^{-K} S_{n-1} E\left[\exp\left\{\lb \lambda, X_n-X_{n-1} \rb+F(X_{n-1},X_n)\right\} \Big | X_{n-1}\right]  \\
&\leq S_{n-1} \quad \text{by inequality (\ref{cond})).}
\end{align*}
\end{proof}
%-----------------------------------------------------------------------------------------------------------------------------------
\begin{proof}[Proof of Theorem \ref{ub}]
Applying Lemma \ref{smart} gives,
\begin{align}\label{mart}
E^{P_\omega}\left[\exp\left\{\lb \lambda, X_n \rb+\sum_{j=1}^n F(X_{j-1},X_j)\right\}\right]&\leq \base^{nK(F)}
\end{align}
We now restrict our attention to functions $F$ in class $\admiss$ and use Theorem \ref{fon}  to show that for all $F \in \admiss$,  given $\epsilon >0$, there is a constant $c_\epsilon \geq 0$ such that
\begin{equation} \label{son}
\sum_{j=1}^n F(X_{j-1},X_j) \geq -c_\epsilon - n \epsilon
\end{equation}
According to  Theorem \ref{fon}, for $F \in \admiss$,
$$\lim_{n \rightarrow \infty} \sup_{\substack{|z| \leq n \\ z \in \mathbb{Z}^d}} \frac{f(z)}{n}=0$$
That is that given $\epsilon >0$, there exists $N_\epsilon$ such that for $n \geq N_\epsilon$,
$$
\Big |\sup_{\substack{|z| \leq n \\ z \in \mathbb{Z}^d}} f(z)\Big| \leq n \epsilon
$$
On the other hand, for $n < N_\epsilon$
$$
\left |\sup_{|z| < n} f(z)\right| \leq \left |\sup_{|z| < N_\epsilon} f(z)\right| \leq c_\epsilon
$$
Hence, for any $n$,
$$
\left|\sup_{|z| < n} f(z)\right| \leq n \epsilon + c_\epsilon
$$
which is exactly what we need to verify that
$$
\left |\sum_{j=1}^n F(X_{j-1},X_j) \right | \leq c_\epsilon + n \epsilon
$$
Hence using (\ref{son})
\begin{align*}
E^{P_\omega}\left[\exp\left\{\lb \lambda, X_n \rb  -c_\epsilon -n \epsilon \right\}\right]  &\leq E^{P_\omega}\left[\exp\left\{\lb \lambda, X_n \rb+\sum_{j=1}^n F(X_{j-1},X_j)\right\}\right]\\
\intertext{and}
E^{P_\omega}\left[\base^{\lb \lambda, X_n \rb}\right]  &\leq \base^{n(K(F)+\epsilon) +c_\epsilon}
\intertext{so that}
\frac{1}{n} \log E^{P_\omega}\left [\base^{\lb \lambda, X_n \rb} \right] & \leq K(F)+\epsilon+c_\epsilon/n\\
\intertext{Letting $n \rightarrow \infty$ then taking  the $\inf$ over $F \in \admiss$ gives us the dersired upper bound,}
\limsup_{n \rightarrow \infty} \frac{1}{n} \log E^{P_\omega}\left [\base^{\lb \lambda, X_n \rb} \right] & \leq \inf _{F \in \admiss} K(F)\\
&= \inf _{F \in \admiss} \esup_\omega \log \sum_{e \in U} p(\omega,e) \base^{\lb \lambda,e \rb+F( \omega,e)}
\end{align*}
\end{proof}
\begin{remark} \label{one}
In one dimension (\ref{son}) follows readily from the ergodic theorem. Indeed, 
\begin{align*}
\frac{1}{n}\sum_{j=1}^n F(j-1,j)&=\frac{1}{n}\sum_{j=1}^n F(\mpt_1^{j-1} \omega,e_1)\\
\intertext{and}
\frac{1}{n}\sum_{j=1}^n F(1-j,-j)&=\frac{1}{n}\sum_{j=1}^n F(\mpt_1^{1-j} \omega,-e_1)\\
&=\frac{1}{n}\sum_{j=1}^n F(\mpt_{-1}^{j-1} \omega,-e_1)
\end{align*}
and the righthand sides both converge to zero by the ergodic theorem. This implies that for each $\epsilon > 0$ there exists $c_\epsilon \geq 0$ such that for all $n$,
$$
\left |\sum_{j=1}^n F(X_{j-1},X_j)\right| \leq c_\epsilon + n\epsilon
$$
\end{remark}
%-----------------------------------------------------------------------------------------------------------------------------------
%-----------------------------------------------------------------------------------------------------------------------------------
%-----------------------------------------------------------------------------------------------------------------------------------
\section{Equivalence of Upper and Lower Bounds.}
At this point we have proven in Theorems \ref{lowbdd} and \ref{ub} that
$$
\Gamma(\lambda) \leq \liminf_{n \rightarrow \infty} \frac{1}{n} \log E^{P_\omega}\left [\base^{\lb \lambda, X_n \rb}\right ] \leq \limsup_{n \rightarrow \infty} \frac{1}{n} \log E^{P_\omega}\left [\base^{\lb \lambda, X_n \rb}\right ]  \leq \Lambda(\lambda)
$$
Therefore the proof of Theorem \ref{a} will be complete once we show that $\Lambda(\lambda) \leq \Gamma(\lambda)$.  We demonstrated at the beginning of this chapter that it will suffice to prove
\begin{theorem} \label{uls}
For each $\epsilon > 0$ there exists a function $F_\epsilon \in \admiss$ such that
$$
\esup_\omega \log \sum_{e \in U} p(\omega, e) \base^{\lb \lambda, e \rb + F_\epsilon (\omega,e)}  \leq \Gamma(\lambda)+\epsilon
 $$
\end{theorem}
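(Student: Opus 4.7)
The plan is to apply Ky Fan's minimax theorem to the $\sup$--$\inf$ definition of $\Gamma(\lambda)$, swap to $\inf$--$\sup$, evaluate the inner supremum explicitly, and recognize the resulting infimum as a variational expression over the ``coboundary'' subclass of $\admiss$ given by $F_h(\omega,e) := h(\omega) - h(T_e\omega)$ for bounded measurable $h$. Once the identity
\begin{equation*}
\Gamma(\lambda) = \inf_{h \text{ bounded}} \esup_\omega \log \sum_{e \in U} p(\omega,e)\base^{\lb\lambda,e\rb + h(\omega) - h(T_e\omega)}
\end{equation*}
is established, the desired $F_\epsilon$ is obtained by choosing a bounded $h_\epsilon$ that achieves the infimum up to $\epsilon$ and setting $F_\epsilon := F_{h_\epsilon}$.

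For the minimax exchange, note that the integrand is affine (hence both convex and concave) in $h$, and once $(q,\phi)$ is parametrized by the joint measure $\mu(\omega,e) := q(\omega,e)\phi(\omega)$, it decomposes into a term linear in $\mu$ plus the concave relative-entropy term $-\sum_e \mu(\omega,e)\log(q(\omega,e)/p(\omega,e))$. This yields the concavity needed on the $(q,\phi)$ side. To supply compactness, I would first restrict $h$ to a weak-$*$ compact ball $\{\|h\|_\infty \leq M\}$ in $L^\infty(\mathbb{P})$, apply Ky Fan there, and then send $M \to \infty$, using that for any compatible pair $(q,\phi) \in A$ the coboundary term $\int (h - h\circ T_e)\, q\phi\, d\mathbb{P}$ vanishes, so enlarging the $h$-ball leaves the value unchanged.

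The inner supremum after swapping can then be computed in two steps. For fixed $h$, the maximum over $q$ reduces at each $\omega$ to a finite-dimensional Gibbs variational problem on probability measures on $U$: with $a_e(\omega) := \lb\lambda,e\rb + h(\omega) - h(T_e\omega)$, the standard duality
\begin{equation*}
\sup_{q(\omega,\cdot)} \sum_{e \in U} q(\omega,e)\bigl[a_e(\omega) - \log(q(\omega,e)/p(\omega,e))\bigr] = \log \sum_{e \in U} p(\omega,e)\,\base^{a_e(\omega)}
\end{equation*}
is attained by $q \propto p\,\base^{a_e}$. The subsequent supremum over probability densities $\phi$ of $\int \phi G\, d\mathbb{P}$ is the essential supremum $\esup_\omega G(\omega)$, delivering the identity above. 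Next, I would verify that $F_h \in \admiss$ whenever $h$ is bounded: the moment condition follows from $|F_h| \leq 2\|h\|_\infty$; the mean-zero condition from $\mathbb{P}$-invariance of $T_e$; and along any closed lattice loop $x_0,\ldots,x_n=x_0$ the sum $\sum_i F_h(x_i,x_{i+1}) = h(T_{x_0}\omega) - h(T_{x_n}\omega)$ telescopes to $0$. The inclusion $\{F_h : h \text{ bounded}\} \subset \admiss$ then gives $\Lambda(\lambda) \leq \Gamma(\lambda)$, and the quantitative statement of Theorem \ref{uls} follows by choosing a near-minimizing $h_\epsilon$.

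The hard part is the rigorous minimax step. The ambient spaces are infinite dimensional, the relative-entropy term can degenerate when $q$ loses absolute continuity with respect to $p$, and there is no natural single topology in which all of $h$, $q$, $\phi$ live compactly with the right semicontinuity. The delicate points will be (i) selecting compatible topologies — weak-$*$ on $L^\infty(\mathbb{P})$ for $h$ and for $\phi$, and a suitable weak topology for transition kernels $q$ — in which the entropy term is upper semicontinuous and the linear terms are jointly continuous; and (ii) controlling the limit $M \to \infty$ when the optimizing $h$ in the restricted problem is unbounded, by exploiting the $T_e$-invariance of $\mathbb{P}$ so that the coboundary inside the logarithm is insensitive to additive or large-scale changes in $h$.
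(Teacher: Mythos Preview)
Your plan has a genuine gap at the minimax step, and it is not just a technicality. Restricting $h$ to a weak-$*$ compact ball puts compactness on the \emph{infimum} side, but the inequality you need is $\inf_h \sup_{(q,\phi)} \leq \sup_{(q,\phi)} \inf_h = \Gamma(\lambda)$; the trivial direction $\sup\inf \leq \inf\sup$ is the one you already have. With $h$ restricted, you get $\inf_{\|h\|\leq M}\sup_{(q,\phi)} = \sup_{(q,\phi)}\inf_{\|h\|\leq M} \geq \Gamma(\lambda)$, and sending $M\to\infty$ only recovers $\inf_h\sup_{(q,\phi)}\geq\Gamma(\lambda)$, which is useless. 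To get the reverse you would need compactness and upper semicontinuity on the $(q,\phi)$ side, and the paper states explicitly that no topology simultaneously delivers both under the moment hypothesis on $\log p$. More seriously, your target identity $\Gamma(\lambda)=\inf_{h\text{ bdd}}\esup_\omega\log\sum_e p(\omega,e)\base^{\lb\lambda,e\rb+h(\omega)-h(T_e\omega)}$ is not expected to hold here: bounded coboundaries $h-T_e h$ form a strict subclass of $\admiss$, and without uniform ellipticity there is no reason the optimizer should live there.

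The paper's route is essentially different. It compactifies the \emph{supremum} side by restricting $(q,\phi)$ to be measurable with respect to an increasing sequence of finite sub-$\sigma$-algebras $\mathcal{E}_k$; on each finite-dimensional simplex Ky Fan applies cleanly and produces an $h_{k,\epsilon}$, hence $F_{k,\epsilon}:=\mathbb{E}[h_{k,\epsilon}-T_e h_{k,\epsilon}\mid\mathcal{E}_{k-1}]$. The inequality at level $k$ involves $\mathbb{E}[\log p\mid\mathcal{E}_k]$ rather than $\log p$, so one must pass $k\to\infty$. The essential estimate is $|F_{k,\epsilon}|\leq c+\mathbb{E}[-\log p(\cdot,e)\mid\mathcal{E}_{k-1}]+\mathbb{E}[-\log p(\cdot,-e)\mid\mathcal{E}_{k-1}]$, which via the moment condition gives a uniform $L^{d+\alpha}$ bound; weak compactness plus Mazur's theorem then yields an $F_\epsilon\in\admiss$ satisfying the claimed inequality. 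Note that this $F_\epsilon$ is a weak $L^{d+\alpha}$-limit of conditioned coboundaries, not a bounded coboundary, which is exactly why the class $\admiss$ (rather than $\{h-T_e h\}$) appears in the statement of $\Lambda$.
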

\begin{proof}
We would like to exchange the order of the $\inf$ and $\sup$ in $\Gamma(\lambda)$, but we cannot apply the minimax theorem here because we cannot find a topology that simultaneously make the spaces of $q$ and $\phi$ compact and the objective function upper semicontinuous. Instead we take the supremum over a compact space to construct a sequence $\{F_{k,\epsilon} \}$ that converges weakly to $F_\epsilon$.

We assume the $\mathcal{F}=\sigma(\cup_k \mathcal{D}_k)$ where each $\mathcal{D}_k$ is a finite $\sigma$-algebra. Let $\mathcal{E}_1=\mathcal{D}_1$ and for $k>1$ set 
$$
\mathcal{E}_k=\sigma\left (\left ( \bigcup_{e \in U} \mpt_e \mathcal{E}_{k-1}\right) \bigcup \mathcal{D}_k\right)
$$
so that $\mpt_e \mathcal{E}_k \supset \mathcal{E}_{k-1}$ and $\mpt_{-e} \mathcal{E}_k \supset \mathcal{E}_{k-1}$.

Let $A_k$ be the set of $\mathcal{E}_k$ measurable probability densities, that is functions $\phi$ which are constant on the (finite number of) atoms of $\mathcal{E}_k$ such that $\int \phi d \mathbb{P}=1$. Similarly let $B_k$ be the space of all simple functions on $\mathcal{E}_k$ with values in $[0,1]$. By taking our supremum over these smaller sets we have,
\begin{align*}
 \sup_{\phi \in A_k} \sup_{q \in B_k}  \inf_h& \Bigg\{\int   \left(\sum_{e \in U} \lb \lambda ,e \rb  - \log \frac{q( e)}{p(e)}+h-\mpt_e h\right)q(e) \phi d \mathbb{P}\Bigg\}
 \leq \Gamma(\lambda)
\end{align*}
We are now ready to apply the following minimax theorem due to Ky Fan~\cite{kyfan:53}.
\begin{theorem}
If $M$ is a compact subset of a topological space and the function $f: M \times N \mapsto \mathbb{R}$ is convex on $N$, concave on $M$ and upper semicontinuous on $M$ for every $\nu \in N$ then,
$$
\sup_{\mu \in M} \inf_{\nu \in N} f(\mu,\nu) =  \inf_{\nu \in N} \sup_{\mu \in M} f(\mu,\nu)
$$
\end{theorem}
Since the spaces  $A_k$, $B_k$ are compact  and the function to be maximized is continuous and concave in $\phi$ and $q$ and is convex in $h$ the theorem applies allowing us to switch the order of the infimum and supremum.
\begin{align*}
 \inf_h  \sup_{\phi \in A_k} \sup_{q \in B_k} & \Bigg\{\int   \left(\sum_{e \in U} \lb \lambda ,e \rb  - \log \frac{q( e)}{p(e)}+h-\mpt_e h\right)q(e) \phi d \mathbb{P}\Bigg\}
 \leq \Gamma(\lambda)
\end{align*}
which is equivalent to 
\begin{align*}
 \inf_h  \esup_{\omega} \sup_{q \in B_k} &\mathbb{E} \left[  \left(\sum_{e \in U} \lb \lambda ,e \rb  - \log \frac{q( e)}{p(e)}+h-\mpt_e h\right)q(e)\Big |\mathcal{E}_k \right]
 \leq \Gamma(\lambda)
\end{align*}
%-----------------------------------------------------------------------------------------------------------------------------------
Since $q$ in $B_k$ is $\mathcal{E}_k$-measurable we have,
\begin{align*}
 \inf_h  \esup_{\omega} \sup_{q \in B_k} \Big \{\sum_{e \in U} \Bigl (\lb \lambda, e \rb  - \log q(e)  +  \mathbb{E}\bigl [ \log p(e)  + h- \mpt_e h|\mathcal{E}_{k}\bigr] \Bigl) q(e)\Big\} \leq \Gamma(\lambda)
\end{align*}
The supremum over $q$ is an elementary calculus problem (see the appendix), resulting in
\begin{align*}
\inf_h  \esup_\omega \left\{  \log \sum_{e \in U} \base^{\lb \lambda, e \rb+\mathbb{E} \left[\log p(\omega,e)+h(\omega)-h(\mpt_e \omega)|\mathcal{E}_k \right]}  \right\}& \leq \Gamma(\lambda)\\
\end{align*}
for any $k\in (1,2, \ldots)$.
This means that for any $\epsilon >0, k \in \mathbb{N}$ there exists $h_{k,\epsilon}$ such that for $\mathbb{P}$-a.e.~$\omega$,
\begin{equation}\label{e}
\log \sum_{e \in U} \base^{\lb \lambda, e \rb+\mathbb{E} \left[\log p(\omega,e)+h_{k,\epsilon}(\omega)-h_{k,\epsilon}(\mpt_e \omega)|\mathcal{E}_k \right]}  \leq \Gamma(\lambda) + \epsilon
\end{equation}
We proceed with the construction of $F_{k,\epsilon}$. Define $H_{k,\epsilon}(\omega,e)=h_{k,\epsilon}(\omega)-h_{k,\epsilon}(\mpt_e \omega)$  and define $G_{k,\epsilon}(\omega,e)=\mathbb{E}\left [H_{k,\epsilon}(\omega,e)|\mathcal{E}_k \right]$ then set $F_{k,\epsilon}(\omega,e)=\mathbb{E}\left[ H_{k,\epsilon}(\omega,e)|\mathcal{E}_{k-1}\right ]$. 

Next we need to show that $\{F_{k,\epsilon}\}$ is weakly compact. Inequality (\ref{e}) shows that
$$
 G_{k,\epsilon}(\omega,e) \leq \Gamma(\lambda) -\lb \lambda,e \rb-\mathbb{E}\left [\log p(\omega,e) |\mathcal{E}_k \right] + \epsilon
$$ and by conditioning with respect to $\mathcal{E}_{k-1}$ that
$$
 F_{k,\epsilon}(\omega,e) \leq \Gamma(\lambda) -\lb \lambda,e \rb-\mathbb{E}\left [\log p(\omega,e) |\mathcal{E}_{k-1} \right] + \epsilon
$$

Note that $-H_{k,\epsilon}(\mpt_{-e}\omega,e)=-h_{k,\epsilon}(\mpt_{-e} \omega)+h_{k,\epsilon}(\omega)=H_{k,\epsilon}(\omega,-e)$. Therefore
\begin{align*}
\Gamma(\lambda) - \lb \lambda,  -e \rb &- \mathbb{E}\left[\log p(\omega,-e) |\mathcal{E}_k\right] + \epsilon \geq G_{k,\epsilon}(\omega,-e)\\
&=\mathbb{E}\left[H_{k,\epsilon}(\omega,-e)|\mathcal{E}_k\right] \\
&=\mathbb{E}\left[-H_{k,\epsilon}(\mpt_{-e}\omega,e)|\mathcal{E}_k\right]\\
&=\mathbb{E}\left[-H_{k,\epsilon}(\omega,e)|\mpt_e \mathcal{E}_k\right]\\
\end{align*} 
and since $\mpt_e \mathcal{E}_{k}\supset \mathcal{E}_{k-1}$, by conditioning with respect to $\mathcal{E}_{k-1}$ we have
$$
F_{k,\epsilon}(\omega,e) \geq -\big(\Gamma(\lambda) - \lb \lambda,-e \rb-\mathbb{E}\left[\log p(\omega,-e) |\mathcal{E}_{k-1}\right] + \epsilon\big)
$$
We conclude that,
\begin{align}\label{mk}
\left|F_{k,\epsilon}(\omega,e)\right| &\leq c_0 + \mathbb{E}\left[-\log p(\omega,e) |\mathcal{E}_{k-1}\right] + \mathbb{E}\left[-\log p(\omega,-e) |\mathcal{E}_{k-1}\right] \\
\intertext{and by Holder's inequality}
\left|F_{k,\epsilon}(\omega,e)\right|^{d+\alpha} &\leq c_1\left (1 + \mathbb{E}\left[-\log p(\omega,e) |\mathcal{E}_{k-1}\right]^{d+\alpha} + \mathbb{E}\left[-\log p(\omega,-e) |\mathcal{E}_{k-1}\right]^{d+\alpha} \right)\\
\intertext{Applying the conditional version of Jensen's inequality gives}
\left|F_{k,\epsilon}(\omega,e)\right|^{d+\alpha} & \leq c_1\left (1 + \mathbb{E}\left[|\log p(\omega,e)|^{d+\alpha} |\mathcal{E}_{k-1}\right] + \mathbb{E}\left[|\log p(\omega,-e)|^{d+\alpha} |\mathcal{E}_{k-1}\right] \right)\\
\intertext{and taking the expectation of both sides yields}
\mathbb{E} \left[ \left |F_{k,\epsilon}(\omega,e)\right|^{d+\alpha} \right] & \leq c_1 \left (1 + \mathbb{E} \left[| \log p(\omega,e)|^{d+\alpha} \right] + \mathbb{E} \left[ |\log p(\omega,-e)|^{d+\alpha} \right] \right)\\
\intertext{which gives} 
\label{mk2} \mathbb{E} \left[ \left |F_{k,\epsilon}(\omega,e)\right|^{d+\alpha} \right] & \leq c_2 < \infty
\end{align}
Therefore, the sequence $\{F_{k,\epsilon}(\omega,e)\}$ is weak $L^{d+\alpha}$-compact. We can therefore assume (passing to a subsequence if necessary) that $\{F_{k,\epsilon}(\omega,e)\}$ converges weakly to some function $F_\epsilon(\omega,e)$ in $L^{d+\alpha}$. 

We would like to have a strogly convergent subsequence of 
$$\{F_{k,\epsilon} + \mathbb{E}[\log p(\omega,e) | \mathcal{E}_{k-1}\}
$$ 
Set $\ell_k(\omega,e):=\mathbb{E}\left[\log p(\omega,e) |\mathcal{E}_{k-1}\right]$; then $\ell_k$ is a $L^{d+\alpha}$ bounded martingale. In particular, it converges strongly and therefore weakly in $L^{d+\alpha}$. Thus the function $S_{k,\epsilon}:=F_{k,\epsilon}+\ell_k$ converges weakly to $F_\epsilon+\log p$ in $L^{d+\alpha}$. We next use the following theorem of Mazur to construct our strongly convegent subsequence. The proof can be found in \cite{Rudin:91} page 67.
\begin{theorem}
Let $1 \leq p \leq \infty$. Suppose $\{m_n\}$ is a sequence in $L^p$ that converges weakly to $m$. Then there is a subsequence $\{M_n\}$ in $L^p$ that converges strongly to $m$ and such that for each $j$, $M_j$ is a convex combination of $\{m_1,m_2, \ldots , m_j\}$.
\end{theorem}
According to the theorem there exists a sequence $\{\tilde{S}_{k,\epsilon}(\omega,e)\}$ that converges stongly to $F_\epsilon(\omega,e)+\log p(\omega,e)$ in $L^{d+\alpha}$, where $\tilde{S}_{k,\epsilon}$ is a convex combination of $\{F_{j,\epsilon}+\ell_j\}_{j=1}^k$, say $\tilde{S}_{k,\epsilon}:=\sum_{j=1}^k a_j^k S_{j,\epsilon}$ and $\sum_{j=1}^k a_j^k =1$. Again by passing to a subsequence, we can assume that $\tilde{S}_{k,\epsilon}$ converges almost surely to $F_\epsilon+\log p$.
Starting with (\ref{e}) we have
\begin{align*}
 \exp\left\{\Gamma(\lambda)+\epsilon\right\} & \geq \sum_{e \in U}\exp \left \{\lb \lambda,e \rb+\mathbb{E}\left[\log p(\omega,e)|\mathcal{E}_j\right] +G_{j,\epsilon}(\omega,e)\right\}   \\ 
 \intertext{Conditioning with respect to $\mathcal{E}_{j-1}$ gives}
& \geq  \sum_{e \in U}\mathbb{E} \left [\exp \left\{\lb \lambda,e \rb+\mathbb{E}\left[\log p(\omega,e)|\mathcal{E}_{j-1}\right] +G_{j,\epsilon}(\omega,e)\right\} \big|\mathcal{E}_{j-1} \right]\\
\intertext{and by Jensen's inequality}
& \geq  \sum_{e \in U}\exp \left\{\lb \lambda,e \rb+\mathbb{E}\left[\log p(\omega,e)|\mathcal{E}_{j-1}\right] +F_{j,\epsilon}(\omega,e)\right\} \\
&=\sum_{e \in U}\exp \left\{\lb \lambda,e \rb+S_{j,\epsilon}\right\} \\
& \geq \sum_{j=1}^k a_j^k \sum_{e \in U}\exp \left\{\lb \lambda,e \rb+S_{j,\epsilon}\right\} \\
\intertext{and by the convexity of the exponential function, and Jensen's inequality once again,}
& \geq  \sum_{e \in U}\exp\left\{\lb \lambda,e \rb+ \sum_{j=1}^k a_j^k S_{j,\epsilon}\right\} \\
& =\sum_{e \in U}\exp\left\{\lb \lambda,e \rb+  \tilde{S}_{k,\epsilon}\right\} \\
\intertext{Since $\tilde{S}_{k,\epsilon}$ converges a.s. to $F_\epsilon +\log p$, letting $k \rightarrow \infty$ gives}
& =\sum_{e \in U}\exp\left\{\lb \lambda,e \rb+  \log p(\omega,e)+F_\epsilon(\omega,e) \right\} \\
&=   \sum_{e \in U} p(\omega,e) \base^{\lb \lambda,e \rb+F_\epsilon(\omega,e)} \\ 
\intertext{so that}
\Gamma(\lambda)+\epsilon &\geq \log  \sum_{e \in U} p(\omega,e) \base^{\lb \lambda,e \rb+F_\epsilon(\omega,e)}
\end{align*} 

To conclude the proof it remains to show that $F_\epsilon(\omega,e) \in \admiss$. Condition (i) that $\mathbb{E}[|F_\epsilon(\omega,e)|^{d+\alpha}] < \infty$ follows from (\ref{mk2}) and the fact that the $L^{d+\alpha}$-norm is weakly lower semicontinuous; indeed 
\begin{align*}
\mathbb{E}[|F_\epsilon(\omega,e)|^{d+\alpha}]^\frac{1}{d+\alpha}  &\leq \liminf_{n \rightarrow \infty} \mathbb{E}[|F_{n,\epsilon}(\omega,e)|^{d+\alpha}]^\frac{1}{d+\alpha}\\
& \leq c_2^\frac{1}{d+\alpha}\\
\mathbb{E}[|F_\epsilon(\omega,e)|^{d+\alpha}] &\leq c_2 < \infty
\end{align*}
The second condition (ii) that $\mathbb{E}[F_\epsilon(\omega,e)]=0$  follows immediately from the definition of weak covergence and the fact that the constant function $1$ is in $L^p(\mathbb{P})$ for any $p$. Indeed 
\begin{align*}
\mathbb{E}[F_{k,\epsilon}]&=\mathbb{E}[\mathbb{E}[H_{k,\epsilon}|\mathcal{E}_{k-1}]]\\
&=\mathbb{E}[\mathbb{E}[h_{k,\epsilon}(\omega)-h_{k,\epsilon}(\mpt_e \omega)|\mathcal{E}_{k-1}]]\\
&=\mathbb{E}[h_{k,\epsilon}(\omega)]-\mathbb{E}[h_{k,\epsilon}(\mpt_e \omega)]\\
&=0
\end{align*}
On the other hand weak convergence implies that
$$
\lim_{k \rightarrow \infty} \mathbb{E}[F_{k,\epsilon}]=\mathbb{E}[F_{\epsilon}]
$$
Finally, to prove (iii), suppose the sequence $\{x_i\}_{i=0}^n \in \mathbb{Z}^d$ is such that $x_0=x_n$ and $x_{i+1}-x_i \in U$. Then
\begin{align*}
\sum_{i=0}^{n-1} F_\epsilon(\mpt_{x_i}\omega, x_{i+1}-x_i)&= \sum_{i=0}^{n-1} F_\epsilon(\mpt_{x_i}\omega, x_{i+1}-x_i)\\
&= \wlim_{k \rightarrow \infty} \sum_{i=0}^{n-1} F_{k,\epsilon}(\mpt_{x_i}\omega, x_{i+1}-x_i)\\
&=\wlim_{k \rightarrow \infty} \sum_{i=0}^{n-1} \mathbb{E} \left [h_{k,\epsilon}(\mpt_{x_i}\omega)-h_{k,\epsilon}(\mpt_{x_{i+1}}\omega)\big | \mathcal{E}_{k-1}\right]\\
&=\wlim_{k \rightarrow \infty} \mathbb{E} \left [h_{k,\epsilon}(T_{x_0} \omega)-h_{k,\epsilon}(\mpt_{x_n}\omega)\big | \mathcal{E}_{k-1}\right]\\
&=0
\end{align*}
thus concluding the proof of Theorem \ref{uls} and that the upper and lower bounds are equivalent.
\end{proof}
%-----------------------------------------------------------------------------------------------------------------------------------
\chapter{Large Deviation Principle}
As usual we split the task of proving the large deviation principle into proving an upper bound and a lower bound.
\section{Upper Bound}
The upper bound can be proved using the G$\ddot{\mathrm{a}}$rtner-Ellis Theorem (see for example \cite{dembo:98}) since the domain of $\Lambda$ turns out to be all of $\mathbb{R}^d$. For completeness we prove the upper bound directly.
\begin{theorem} \label{ldpc}
For any closed set $C \in \mathbb{R}^d$
\begin{align*}
\limsup_{n \rightarrow \infty} \frac{1}{n} \log P_\omega \left[ \frac{X_n}{n} \in C\right]  & \leq - \inf_{x \in C} \sup_\lambda \{\lb \lambda, x \rb- \Lambda(\lambda)\}\\
\end{align*}
\end{theorem}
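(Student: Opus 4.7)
The plan is to run the classical G\"artner--Ellis upper bound argument, using Theorem \ref{a} as the input and an exponential Chebyshev plus covering argument to transfer moment generating function estimates into probability estimates.

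First I would reduce to a compact set. Since $\{X_n\}$ is nearest neighbor and starts at $0$, we have $|X_n|\le n$, hence $X_n/n\in B_1$ almost surely. Therefore $P_\omega[X_n/n\in C]=P_\omega[X_n/n\in C\cap B_1]$ and we may assume $C$ is compact. Next, for each $x\in C$ and each large $M>0$, by definition of $I$ I can pick $\lambda=\lambda_{x,M,\delta}$ such that
$$\lb \lambda,x\rb-\Lambda(\lambda)>\min(I(x),M)-\delta,$$
and then pick a radius $r=r_{x,M,\delta}>0$ small enough that $\lb\lambda,y\rb>\lb\lambda,x\rb-\delta$ for every $y\in B_r(x)$. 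The exponential Chebyshev inequality gives
$$P_\omega\!\left[\tfrac{X_n}{n}\in B_r(x)\right]\le \base^{-n(\lb\lambda,x\rb-\delta)}\,E^{P_\omega}\!\left[\base^{\lb\lambda,X_n\rb}\right],$$
and taking $\tfrac{1}{n}\log$ and using Theorem \ref{a} yields
$$\limsup_{n\to\infty}\tfrac{1}{n}\log P_\omega\!\left[\tfrac{X_n}{n}\in B_r(x)\right]\le -\lb\lambda,x\rb+\delta+\Lambda(\lambda)\le -\min(I(x),M)+2\delta.$$

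Now I would cover $C$ by finitely many such balls $B_{r_i}(x_i)$, $i=1,\dots,N$, using compactness. The union bound gives
$$P_\omega\!\left[\tfrac{X_n}{n}\in C\right]\le \sum_{i=1}^N P_\omega\!\left[\tfrac{X_n}{n}\in B_{r_i}(x_i)\right],$$
and since $\tfrac{1}{n}\log$ of a finite sum of exponentials is asymptotically the maximum of the exponents,
$$\limsup_{n\to\infty}\tfrac{1}{n}\log P_\omega\!\left[\tfrac{X_n}{n}\in C\right]\le \max_{1\le i\le N}\bigl(-\min(I(x_i),M)+2\delta\bigr)\le -\min\!\Bigl(\inf_{x\in C}I(x),\,M\Bigr)+2\delta.$$
Letting $\delta\downarrow0$ and $M\uparrow\infty$ produces the desired bound.

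There is no real technical obstacle here, since Theorem \ref{a} already supplies a full limit for the logarithmic moment generating function on all of $\mathbb{R}^d$ (note $|\Lambda(\lambda)|\le|\lambda|$ because $|X_n|\le n$, so $\Lambda$ is finite and, as a pointwise limit of the convex functions $\tfrac{1}{n}\log E^{P_\omega}[\base^{\lb\lambda,X_n\rb}]$, is convex); the only care needed is the standard truncation at height $M$, which handles points $x\in C$ where $I(x)=+\infty$ and ensures a uniform choice of $\lambda$ on each ball of the cover. The rest is purely the bookkeeping of the compactness and union-bound argument described above.
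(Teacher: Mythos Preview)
Your proposal is correct and follows essentially the same route as the paper: reduce to a compact set via $|X_n|\le n$, pick for each point a near-optimal $\lambda$ and a small ball on which the linear functional varies by at most $\delta$, apply exponential Chebyshev and Theorem~\ref{a}, cover by finitely many balls, and let the truncation parameters go away. The only cosmetic difference is that the paper packages your $M$ and $\delta$ into a single parameter by setting $I_\delta(x)=\min\{I(x)-\delta,1/\delta\}$, whereas you keep them separate.
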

\begin{proof}
Since the walk is nearest neighbor, $|X_n|/n \leq 1$. Therefore,
$$
P_\omega \left[ \frac{X_n}{n} \in C\right]=P_\omega \left[ \frac{X_n}{n} \in C \cap B \right]
$$ 
where $B$ is the closed unit ball centered at the origin. Hence it suffices to prove the result for compact sets.
Set  $I(x)=\sup_\lambda \{\lb \lambda, x \rb- \Lambda(\lambda)\}$ and let $I_\delta=\min \{I(x)-\delta,1/\delta \}$. Let $K$ be a compact set in $\mathbb{R}^d$. By the definition of $I_\delta$ we may choose for every $z \in K$, $\lambda_z \in \mathbb{R}^d$ so that
$$
\lb \lambda_z, z \rb - \Lambda(\lambda_z) \geq I_\delta (z)
$$
For each $z \in K$ choose $\alpha_z >0$, so that $\alpha_z |\lambda_z| \leq \delta$. Define $D_z$ to be the open ball centered at $z$ with radius $\alpha_z$ that is $D_z = \{ x : |x-z| < \alpha_z\}$. Then by Chebyshev's inequality,
$$
P_\omega \left [X_n/n \in D_z \right] \leq E^{P_\omega}\left[ \base^{\lb \lambda_z, X_n \rb} \right] \base^{-\inf_{x \in D_z}\{n\lb\lambda_z,x\rb\}} 
$$
Since $-\lb \lambda_z,x \rb = \lb \lambda_z,z-x \rb - \lb \lambda_z,z \rb$, we know that
\begin{align*}
- \inf_{x \in D_z} \lb \lambda_z,x \rb & \leq \alpha_z |\lambda_z|-\lb \lambda_z,z \rb\\
& \leq \delta -\lb \lambda_z,z \rb
\end{align*}
This gives us,
$$
\frac{1}{n} \log P_\omega \left [X_n/n \in D_z \right] \leq E^{P_\omega}\left[ \base^{\lb \lambda_z, X_n \rb} \right] + \delta - \lb \lambda_z, z \rb
$$
By the compactness of $K$ we can choose a finite number of these balls, say $N$,  centered at points $z_i$ that cover $K$. Then
$$
\frac{1}{n} \log P_\omega \left [X_n/n \in K \right] \leq \frac{1}{n} \log N + \delta - \min_{i=1,\ldots,N}\left\{ \lb \lambda_{z_i}, z_i \rb -\frac{1}{n} \log E^{P_\omega}\left[ \base^{\lb \lambda_{z_i}, X_n \rb} \right] \right\}
$$
and hence,
\begin{align*}
\limsup_{n \rightarrow \infty}\frac{1}{n} \log P_\omega \left [X_n/n \in K \right]  & \leq  \delta - \min_{i=1,\ldots,N}\left\{ \lb \lambda_{z_i}, z_i \rb -\Lambda(\lambda_{z_i})\right\}\\
\intertext{and by our choice of the $\lambda_{z_i}$}
& \leq  \delta - \min_{i=1,\ldots,N} I_\delta (\lambda_{z_i})\\
& \leq \delta - I_\delta (K)
\end{align*}
where $I(K)=\inf_{x \in K} I(x)$. Letting $\delta \downarrow 0$ gives the upper bound.
\end{proof}
%-----------------------------------------------------------------------------------------------------------------------------------
\section{Lower Bound}

\begin{theorem} \label{ldpo}
For any open set $O\in \mathbb{R}^d$
\begin{align*}
\liminf_{n \rightarrow \infty} \frac{1}{n} \log P_\omega \left[ \frac{X_n}{n} \in O\right]  & \geq - \inf_{x \in O} \sup_\lambda \{\lb \lambda, x \rb- \Lambda(\lambda)\}\\
\end{align*}
\end{theorem}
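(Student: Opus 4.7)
The plan is to prove the lower bound by a standard change of measure argument applied locally. It suffices to show that for every $x \in O$ with $I(x) < \infty$ and every $\delta > 0$ small enough that $B_\delta(x) \subset O$,
$$
\liminf_{n \rightarrow \infty} \frac{1}{n} \log P_\omega \left[ \frac{X_n}{n} \in B_\delta(x)\right] \geq -I(x),
$$
since then taking the infimum over $x \in O$ yields the claim. The tilting measure will be supplied by a Markov chain $Q_\omega$ on $\Omega$ whose transition kernel $q_x$ has $x$ as its mean drift under its ergodic invariant density $\phi_x$.

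The key step is to produce, for each $\epsilon > 0$, such a pair $(q_x, \phi_x)$, with $\phi_x d\mathbb{P}$ ergodic invariant for $q_x$, satisfying
$$
\int \sum_{e \in U} e\, q_x(\omega,e)\phi_x(\omega)\,d\mathbb{P} = x, \qquad \int \sum_{e \in U} q_x(\omega,e)\log \frac{q_x(\omega,e)}{p(\omega,e)} \phi_x(\omega)\,d\mathbb{P} \leq I(x) + \epsilon.
$$
Write $H(q,\phi)$ for the second integral and $m(q,\phi)$ for the first. The proof of Theorem \ref{lowbdd} yields the representation $\Lambda(\lambda) = \sup_{(q,\phi) \in A} \{ \lb \lambda, m(q,\phi) \rb - H(q,\phi) \}$, from which the existence of the required $(q_x,\phi_x)$ will follow by Fenchel--Legendre duality, i.e.\ by interchanging a supremum over $\lambda$ with an infimum over $(q,\phi)$ --- a minimax argument in the spirit of Theorem \ref{uls}. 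Given this pair, write
$$
P_\omega\bigl[X_n/n \in B_\delta(x)\bigr] = E^{Q_\omega}\left[ \frac{dP_\omega}{dQ_\omega}\mathbf{1}_{\{X_n/n \in B_\delta(x)\}} \right]
$$
and restrict the expectation to the event $G_n = \{n^{-1}\log(dQ_\omega/dP_\omega) \leq H(q_x,\phi_x)+\epsilon\}$, on which the Radon--Nikodym derivative is bounded below by $\exp(-n(H(q_x,\phi_x)+\epsilon))$. Two ergodic inputs under $Q_\omega$ then close the loop: the strong law of large numbers (Theorem \ref{slln}) applied to $(q_x,\phi_x)$ forces $X_n/n \to x$ $Q_\omega$-a.s., so $Q_\omega[X_n/n \in B_\delta(x)] \to 1$; and Birkhoff's ergodic theorem applied to the environment chain gives $n^{-1}\log(dQ_\omega/dP_\omega) \to H(q_x,\phi_x)$ $Q_\omega$-a.s., so $Q_\omega[G_n] \to 1$. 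Combining these yields
$$
\frac{1}{n} \log P_\omega\bigl[X_n/n \in B_\delta(x)\bigr] \geq -I(x) - 2\epsilon + o(1),
$$
and sending $\epsilon \downarrow 0$ finishes the bound at $x$.

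The main obstacle I anticipate is the Fenchel duality step. The easy inequality $I(x) \leq \inf\{H(q,\phi) : (q,\phi) \in A,\, m(q,\phi) = x\}$ drops out immediately from the variational formula for $\Lambda$; the reverse inequality, which actually supplies the near-minimizing $(q_x,\phi_x)$, requires a minimax exchange whose hypotheses---weak compactness of a suitable subset of $A$ and joint lower semicontinuity of $(q,\phi)\mapsto H(q,\phi)$---must be checked, most naturally by the same $\mathcal{E}_k$-truncation trick used in Theorem \ref{uls}. A secondary technicality is the handling of $x$ lying on the boundary of the effective domain of $I$; the a priori bound $|X_n|/n \leq 1$ confines attention to the closed unit $\ell^1$-ball, and such boundary points can be approached from inside by convex combinations using convexity of $\Lambda$, with the final lower bound recovered from lower semicontinuity of $I$.
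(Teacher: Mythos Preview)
Your overall architecture matches the paper's: reduce to balls, tilt by a Markov chain $Q_\omega$ with invariant density $\phi$, and use the ergodic theorem plus the RWRE law of large numbers to turn the change-of-measure bound into $-H(q,\phi)$. Your good-set argument with $G_n$ is a standard alternative to the paper's Jensen trick via the conditional measure $R_\omega$; either works.

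The genuine divergence is in how you produce the near-optimal pair $(q_x,\phi_x)$ with drift $x$ and entropy close to $I(x)$. You propose to get this from a minimax interchange of $\sup_\lambda$ and $\inf_{(q,\phi)}$, and you correctly flag the obstacle: one would need compactness on the space of $(q,\phi)$ pairs and lower semicontinuity of $H$, and the $\mathcal{E}_k$-truncation of Theorem~\ref{uls} does not obviously deliver this since it was engineered to produce an $F\in\admiss$, not a pair $(q,\phi)$ with prescribed drift. The paper sidesteps this completely. It first records that the lower bound is $-g(x)$ with $g(x)=\inf_{(q,\phi)\in A_x} H(q,\phi)$, without any claim of near-attainment. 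Then, using the already-proved Theorem~\ref{a}, it observes that $\Lambda(\lambda)=\sup_x\{\langle\lambda,x\rangle-g(x)\}$, i.e.\ $\Lambda=g^*$. Finally, Lemma~\ref{gcon} proves $g$ is convex by an explicit construction: given $(q_1,\phi_1)\in A_{x^1}$ and $(q_2,\phi_2)\in A_{x^2}$, set $\phi_3=a\phi_1+b\phi_2$ and $q_3=\alpha q_1+\beta q_2$ with $\alpha=a\phi_1/\phi_3$, $\beta=b\phi_2/\phi_3$, and check directly that $(q_3,\phi_3)\in A_{ax^1+bx^2}$ with $H(q_3,\phi_3)\le aH(q_1,\phi_1)+bH(q_2,\phi_2)$. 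Convexity of $g$ then gives $I=\Lambda^*=g^{**}=g$, and the existence of near-minimizers is automatic from the definition of infimum.

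So your plan is sound but leaves the hardest step unresolved; the paper's route replaces your minimax with a short, self-contained convexity lemma that exploits the structure of the pair $(q,\phi)$ and leans on Theorem~\ref{a}, which is already in hand.
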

\begin{proof}
It suffices to prove that for any ball $D_{\epsilon}(x)$  (an open ball centered at $x$ with radius $\epsilon$) that
\begin{align} \label{bex}
\liminf_{n \rightarrow \infty} \frac{1}{n} \log P_\omega \left[ \frac{X_n}{n} \in D_\epsilon (x) \right]  & \geq -  \sup_\lambda \{\lb \lambda, x \rb- \Lambda(\lambda)\}
\end{align}
This is because for each point $x$ in an open set $O$ there is a number $\epsilon$ such that $D_\epsilon (x) \subset O$. So that after taking the infimum over $x \in O$ of both sides of (\ref{bex}) we see that for each $x \in O$ and such an $\epsilon$ the left hand side of ($\ref{bex}$) is less than or equal to 
$$
\liminf_{n \rightarrow \infty} \frac{1}{n} \log P_\omega \left[ \frac{X_n}{n} \in O\right] 
$$
Hence we can proceed to estimate,
\begin{align*}
\liminf_{n \rightarrow \infty} \frac{1}{n} \log P_\omega\left[ \frac{X_n}{n}\in D_{\epsilon} (x) \right]&=\liminf_{n \rightarrow \infty} \frac{1}{n} \log E^{P_\omega}\left[ 1_{D_{\epsilon} (x)}\left(\frac{X_n}{n}\right) \right]\\
&=\liminf_{n \rightarrow \infty} \frac{1}{n} \log E^{Q_\omega}\left[ 1_{D_{\epsilon} (x)}\left(\frac{X_n}{n}\right) \frac{dP_\omega}{dQ_\omega}\right]
\end{align*}  Consider the probability measure $R_\omega$ defined by 
$$
dR_\omega = \frac{1_{D_\epsilon (x)}(X_n/n)} {Q_\omega (D_\epsilon (X_n/n))}dQ_\omega
$$
setting $D=D_\epsilon(x)$ we calculate
\begin{align}\label{jen}
\log E^{Q_\omega}\left[ 1_{D}(X_n/n) \frac{dP_\omega}{dQ_\omega}\right]&= \log \left\{Q_\omega (D) E^{R_\omega} \left[\frac{dP_\omega}{dQ_\omega}\right] \right\}\\
&= \log Q_\omega(D) + \log E^{R_\omega}\left [\frac{dP_\omega}{dQ_\omega}\right]\\
\intertext{and by Jensen's inequality}
&\geq \log Q_\omega (D)-\frac{1}{Q_\omega (D)}E^{Q_\omega}\left[1_D(X_n/n) \log \frac{dQ_\omega}{dP_\omega}\right]\\
\intertext{multiplying by $1/n$ gives us}
\frac{1}{n}\log E^{Q_\omega}\left[ 1_{D}(X_n/n) \frac{dP_\omega}{dQ_\omega}\right] & \geq \frac{\log Q_\omega (D)}{n}-\frac{1}{Q_\omega (D)}E^{Q_\omega}\left[ \frac{1_{D}(X_n/n)}{n} \log \frac{dQ_\omega}{dP_\omega}\right]
\end{align}
If $d\mathbb{Q}=\phi(\omega)d\mathbb{P}$ is an ergodic invariant measure for the $q$-markov chain such that 
$$
\int q(\omega,e)-q(\omega,-e)\phi(\omega)d\mathbb{P}=\lb x,e \rb
$$then by (\ref{dqdp}) and the law of large numbers for the RWRE the right hands side of (\ref{jen})  tends to
$$
-\int_\Omega \sum_{e \in U} \log \frac{q(\omega,e)}{p(\omega,e)}q(\omega,e) \phi(\omega) d \mathbb{P}
$$ as $n$ tends to infinity. Therefore
$$
\liminf_{n \rightarrow \infty} \frac{1}{n} \log P_\omega\left[ \frac{X_n}{n}\in D_{\epsilon} (x) \right] \geq
-g(x)
$$ 
where
\begin{equation} \label{gx}
g(x):=\inf_{(q,\phi) \in A_x}\int_\Omega \sum_{e \in U} \log \frac{q(\omega,e)}{p(\omega,e)}q(\omega,e) \phi(\omega) d \mathbb{P}
\end{equation}
$A_x$ is the collection of pairs $(q, \phi)$,  $\phi$ is an the egodic invariant distribution for $q$ and $\int q(e)-q(-e)\phi(\omega)d\mathbb{P}=\lb x,e \rb$.

We know from the conclusion of Theorem \ref{a} that
\begin{align*}
\lim_{n \rightarrow \infty} \frac{1}{n} \log E^{P_\omega}\left[ \base^{\lb \lambda, X_n \rb}\right] 
&=\sup_{(q,\phi) \in A}\left\{\int_\Omega \lb \lambda, e \rb q(e)-\sum_{e \in U} \log \frac{q(e)}{p(e)}q(e)  \phi d \mathbb{P} \right\}\\
&=\sup_x \sup_{(q,\phi) \in A_x}\left\{\lb \lambda, x \rb-\int_\Omega \sum_{e \in U} \log \frac{q(e)}{p(e)}q(e)\phi d \mathbb{P}\right\}\\
&= \sup_x \left\{\lb \lambda, x \rb-\inf_{(q,\phi) \in A_x}\int_\Omega \sum_{e \in U} \log \frac{q(e)}{p(e)}q(e) \phi d \mathbb{P}\right\}\\
&=\sup_x \left\{\lb \lambda, x \rb-g(x)\right\}
\end{align*}
We show below in Lemma \ref{gcon} that $g(x)$ is convex, identifying it as the convex conjugate of $\Lambda(\lambda)$ thus concluding the proof.
\end{proof}
\begin{lemma} \label{gcon}
The function $g(x)$  in (\ref{gx}) is convex.
\end{lemma}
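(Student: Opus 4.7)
The plan is to prove convexity by an explicit convex combination at the level of joint invariant measures, followed by the log-sum inequality (equivalently, joint convexity of relative entropy).

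Fix $x_1, x_2 \in \mathbb{R}^d$, $\theta \in (0,1)$, and set $\bar{x} = \theta x_1 + (1-\theta) x_2$. From arbitrary pairs $(q_i, \phi_i) \in A_{x_i}$ I would build a new pair $(\bar{q}, \bar{\phi})$ by
$$
\bar{\phi} = \theta \phi_1 + (1-\theta)\phi_2, \qquad \bar{q}\,\bar{\phi} = \theta q_1\phi_1 + (1-\theta) q_2 \phi_2.
$$
Three routine checks are then in order: (a) $\bar{q}$ is a transition kernel since $\sum_e \bar{q}\bar{\phi} = \theta \phi_1 + (1-\theta)\phi_2 = \bar{\phi}$; (b) $\bar{\phi}$ is $\bar{q}$-invariant, because for any bounded measurable $h$,
$$
\int \sum_{e \in U} h(\mpt_e\omega)\,\bar{q}(\omega,e)\,\bar{\phi}(\omega)\,d\mathbb{P} = \theta \int h\,\phi_1\,d\mathbb{P} + (1-\theta)\int h\,\phi_2\,d\mathbb{P} = \int h\,\bar{\phi}\,d\mathbb{P},
$$
using $q_i$-invariance of $\phi_i$; and (c) $\int (\bar{q}(e)-\bar{q}(-e))\bar{\phi}\,d\mathbb{P} = \lb \bar{x},e\rb$ by linearity.

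The entropy bound comes from the pointwise log-sum inequality applied with $a_i = \alpha_i q_i \phi_i$ and $b_i = \alpha_i p\,\phi_i$, where $\alpha_1 = \theta$, $\alpha_2 = 1-\theta$:
$$
\theta q_1\phi_1 \log\frac{q_1}{p} + (1-\theta) q_2\phi_2 \log\frac{q_2}{p} \;\geq\; \bar{q}\bar{\phi}\,\log\frac{\bar{q}\bar{\phi}}{p\,\bar{\phi}} \;=\; \bar{q}\bar{\phi}\,\log\frac{\bar{q}}{p}.
$$
Integrating against $d\mathbb{P}$ and summing over $e \in U$ yields $\theta H(q_1,\phi_1) + (1-\theta) H(q_2,\phi_2) \geq H(\bar{q}, \bar{\phi})$, where $H$ denotes the integrand in the definition of $g$. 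Taking the infimum over $(q_i, \phi_i) \in A_{x_i}$ would give $\theta g(x_1) + (1-\theta) g(x_2) \geq g(\bar{x})$.

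The main subtlety is that $\bar{\phi}$ is generally only invariant for $\bar{q}$, not ergodic, while $A_x$ requires ergodicity. I would handle this by observing that the infimum defining $g(x)$ is unchanged when ergodicity is relaxed to plain invariance: the displacement constraint and the entropy integrand are both affine in $\phi$, so an ergodic decomposition of any invariant $\bar{\phi}$ produces ergodic components whose averaged cost and averaged displacement reproduce those of $\bar{\phi}$; in particular the $\inf$ over ergodic pairs coincides with the $\inf$ over invariant pairs. With this reduction, the construction above lives in the right admissible class and the convexity of $g$ follows.
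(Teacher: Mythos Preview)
Your construction is exactly the paper's: writing $\bar q\,\bar\phi=\theta q_1\phi_1+(1-\theta)q_2\phi_2$ with $\bar\phi=\theta\phi_1+(1-\theta)\phi_2$ is the same as the paper's $q_3=\alpha q_1+\beta q_2$, $\phi_3=a\phi_1+b\phi_2$ with $\alpha=a\phi_1/\phi_3$, $\beta=b\phi_2/\phi_3$; and your log-sum inequality is equivalent to the paper's use of the convexity of $x\log x$. The one place you diverge is the ergodicity issue: you invoke an ergodic decomposition to relax $A_x$ to invariant pairs, whereas the paper simply cites the fact (stated earlier around equation~(\ref{stat}), with reference to Sznitman) that in this setting any $\phi\,d\mathbb{P}$ invariant for the environment chain is automatically ergodic, so $(\bar q,\bar\phi)$ already lies in $A_{\bar x}$ with no extra work.
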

\begin{proof}
Set 
$$
\gamma(q,\phi):=\int_\Omega\sum_{e \in U} \log \frac{q(\omega,e)}{p(\omega,e)}q(\omega,e) \phi(\omega) d \mathbb{P}
$$
Then 
$$
g(x)= \inf_{(q,\phi) \in A_x} \gamma(q,\phi)
$$
We want to show that for $a \in [0,1]$, $b=1-a$, that
$$
g(ax^1+bx^2) \leq ag(x^1) +bg(x^2)
$$
The definition of $g(x)$ guarantee's that for a given $\epsilon >0$, we can choose $(q_k,\phi_k) \in A_{x^k}$ so that $\gamma(q_k,\phi_k) \leq g(x^k)+\epsilon/2$. Assume that we have chosen such a $(\phi_k,q_k) \in A_{x^k}$ for $k=1,2$.
Our aim is to construct $q_3$, $\phi_3$ so that 
\begin{enumerate}
\item[(i)] $\phi_3$ is an ergodic invariant distribution for $q_3$
\item[(ii)] $\int (q_3(\omega,e)-q_3(\omega,-e)\phi_3 d\mathbb{P}=a \lb x^1,e \rb +b\lb  x^2,e
\rb$
\item[(iii)] $\gamma(q_3,\phi_3) \leq a \gamma(q_1,\phi_1)+ b \gamma(q_2,\phi_2) $
\end{enumerate}
Then  $q_3$, $\phi_3$ will be in $A_{ax^1+bx^2}$ and 
\begin{align*} 
g(ax^1+bx^2) &\leq \gamma(q_3,\phi_3) \leq a \gamma(q_1,\phi_1)+ b \gamma(q_2,\phi_2) \\
&\leq ag(x^1)+bg(x^2)+\epsilon
\end{align*}
proving the lemma.
To construct $(q_3, \phi_3)$ define,
\begin{align*}
\alpha &:=  \frac{a \phi_1}{a \phi_1 + b \phi_2}\\
\beta &:= \frac{b \phi_2}{a \phi_1 +b  \phi_2}\\
q_3(\omega,e) &:=\alpha q_1(\omega,e) +\beta q_2(\omega,e)\\
\phi_3 &:= a \phi_1 + b \phi_2 \\
\end{align*}
To check condition (i), we have to show that  $\phi_3$ is an ergodic invariant distribution for $q_3$. We use the condition (\ref{stat})  and calculate that for any bounded measurable function $h$,
\begin{align*}
\int \sum_{e \in U} h(\mpt_e \omega) q_3\phi_3 d\mathbb{P}
&=\int \sum_{e \in U}  h(\mpt_e \omega) (\alpha q_1 +\beta q_2 )(a \phi_1 + b \phi_2) d\mathbb{P}\\
&=a \int   \sum_{e \in U}  h(\mpt_e \omega) q_1  \phi_1  d\mathbb{P} + b \int  \sum_{e \in U}   h(\mpt_e \omega) q_2  \phi_2  d\mathbb{P} \\
&= a \int h \phi_1 d \mathbb{P} +b \int h \phi_2 d \mathbb{P}\\
&= \int h \phi_3 d \mathbb{P} \\
\end{align*}
Showing that (i) is indeed true.

To check condition (ii) we simply expand the definitions to compute
\begin{align*}
\int (q_3(e)-q_3(-e)) \phi_3 d\mathbb{P} &=\int (\alpha q_1(e) +\beta q_2(e) -\alpha  q_1(-e) -\beta q_2(-e))(a \phi_1 +b \phi_2) d \mathbb{P}\\
\begin{split}
&= \int \alpha (q_1(e)-q_1(-e))(a \phi_1 +b \phi_2) d \mathbb{P}\\
& + \int \beta (q_2(e)-q_2(-e))(a \phi_1 +b \phi_2) d \mathbb{P}
\end{split}\\
&= a\int  (q_1(e)-q_1(-e)) \phi_1 d \mathbb{P} +b \int  (q_2(e)-q_2(-e)) \phi_2 d \mathbb{P}\\
&=a \lb x^1,e \rb+b \lb x^2,e \rb
\end{align*}

Lastly, we prove condition (iii), We have,
\begin{align*}
\gamma(q_3,\phi_3) &= \int \left\{ \log \frac{ q_3(e)}{p(e)} q_3(e)+\log \frac{q_3(-e)}{p(-e)}q_3(-e)\right\} \phi_3 d \mathbb{P}\\
&= \int  \log \frac{\alpha q_1(e)+\beta q_2(e)}{p(e)}(\alpha q_1(e)+\beta q_2(e)) \phi_3 d \mathbb{P}\\
&+ \int \log \frac{\alpha q_1(-e)+\beta q_2(-e)}{p(-e)}(\alpha q_1(-e)+\beta q_2(-e)) \phi_3 d\mathbb{P}\\
\intertext{and since $x \log x$ is convex, by Jensen's inequality}
& \leq  \int  \left\{\alpha \log \frac{ q_1(e)}{p(e)} q_1(e)+\alpha\log \frac{q_1(-e)}{p(-e)}q_1(-e)\right\} \phi_3 d \mathbb{P}\\
&+  \int  \left\{\beta \log \frac{ q_2(e)}{p(e)} q_2(e)+\beta \log \frac{q_2(-e)}{p(-e)}q_2(-e)\right\} \phi_3 d \mathbb{P}\\
&=\int  \left\{a \log \frac{ q_1(e)}{p(e)} q_1(e)\phi_1+a \log \frac{q_1(-e)}{p(-e)}q_1(-e)\phi_1\right\}  d \mathbb{P}\\
&+\int  \left\{ b \log \frac{ q_2(e)}{p(e)} q_2(e)\phi_2+b \log \frac{q_2(-e)}{p(-e)}q_2(-e)\phi_2\right\}  d \mathbb{P}\\
&= a \gamma(q_1,\phi_1)+ b \gamma(q_2,\phi_2)\\
\end{align*}
proving that $g(x)$ is indeed convex.
\end{proof} 
 \chapter{The One Dimensional Case}
 \section{Previous results}
 The situation in one dimension has been well understood for some time. A good starting place for a more detailed study and a more comprehensive bibliography is  \cite{zeitouni:03}. Here we will show the equivalence of our results with those perviously obtained in the  one dimensional case .
  
  Let $\tau_1= \inf \{n: X_n=1 \}$ and   $\tau_{-1}= \inf \{n: X_n=-1 \}$ and for any $r \in \mathbb{R}$,
\begin{align*}
 G(\omega,r)&=E^{P_\omega}[\base^{r \tau_1} 1_{\{\tau_1 < \infty\}}]; & H(\omega,r)&=E^{P_\omega}[\base^{r \tau_{-1}} 1_{\{\tau_{-1} < \infty\}}] \\
 g (r)&=\mathbb{E}[\log G(\omega, r)]; &  h (r)&=\mathbb{eE}[\log H(\omega, r)]
 \end{align*}
 Comets, Gantert, and Zeitouni \cite{comets:00} present the following quenched large deviation principle for an ergodic nearest neighbor random walk in a random environment,
  \begin{theorem}
 Assume that $(\mathbb{P},\mpt) $ is ergodic and that the random walk is uniformly elliptic. Further assume that $\mathbb{E}[\log (p^-/p^+)] \leq 0$ (i.e. the walk is transient to the right). Then  $X_n/n$ satisfies a large deviation principle with rate function,
 \begin{equation}
 J(x)=
 \begin{cases}
 \sup_{r \in \mathbb{R}} \left \{r-x g(r)] \right \} & 0 \leq x \leq 1 \\
 \sup_{r \in \mathbb{R}} \left \{r+x h(r)] \right \} & -1 \leq x \leq 0
 \end{cases}
 \end{equation}
   \end{theorem}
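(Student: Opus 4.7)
The plan is to reduce the large deviation principle for $X_n/n$ to a large deviation principle for the hitting times $\tau_k$ via a duality argument, and then use the Markov property on the integer lattice to express the moment generating function of $\tau_k$ as a product that can be handled by the ergodic theorem. First I would restrict attention to $0 < x \leq 1$; the symmetric range $-1 \leq x \leq 0$ follows by reversing the roles of $\tau_1$ and $\tau_{-1}$ and replacing $G$ by $H$. Since the walk is transient to the right, $\tau_1 < \infty$ $P_\omega$-a.s., so by the strong Markov property the hitting time of $k > 0$ decomposes as $\tau_k = \sum_{i=1}^{k}(\tau_i - \tau_{i-1})$, where under $P_\omega$ the increments are independent and the $i$-th increment has the same distribution as $\tau_1$ under $P_{T^{i-1}\omega}$.

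Next I would compute the quenched logarithmic moment generating function of $\tau_k$. Using independence of the increments,
\begin{equation*}
\frac{1}{k}\log E^{P_\omega}\bigl[\base^{r\tau_k}\bigr] = \frac{1}{k}\sum_{i=1}^{k}\log G(T^{i-1}\omega,r).
\end{equation*}
Uniform ellipticity ensures that $\log G(\cdot,r) \in L^1(\mathbb{P})$ on the interior of the effective domain $\{r : g(r) < \infty\}$, so by the one-dimensional ergodic theorem this converges $\mathbb{P}$-a.s.\ to $g(r)$. The G\"artner--Ellis theorem then yields a large deviation principle for $\tau_k/k$ with rate function $J^*(t) := \sup_r\{rt - g(r)\}$, provided $g$ is essentially smooth and lower semicontinuous, which will follow from uniform ellipticity and the implicit quadratic relation satisfied by $G$ at a site.

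The core step is then the hitting-time/position duality. For $0 < x \leq 1$ and $k = \lfloor xn \rfloor$ one has the sandwich
\begin{equation*}
P_\omega\bigl[X_n/n \geq x\bigr] \approx P_\omega[\tau_k \leq n],
\end{equation*}
and a contraction principle (carried out by hand, since $t \mapsto 1/t$ is monotone on $(0,\infty)$) converts the rate function for $\tau_k/k$ at value $1/x$ into the rate function for $X_n/n$ at value $x$. A short calculation gives
\begin{equation*}
x \cdot J^*(1/x) = x\sup_r\bigl\{r/x - g(r)\bigr\} = \sup_r\bigl\{r - x g(r)\bigr\},
\end{equation*}
matching the stated $J(x)$. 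The mirror argument using $\tau_{-1}$ and $h(r)$ covers $-1 \leq x \leq 0$, and the case $x = 0$ requires a separate check that the two branches agree at the origin (both equal $\sup_r r$ over the intersection of effective domains, which under the transience-to-the-right assumption is attained consistently).

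The main obstacle I anticipate is the passage between the LDP for $\tau_k/k$ and the LDP for $X_n/n$ at values of $x$ smaller than the limiting velocity $v_P := 1/g'(0^-)$, where the walk can be slow or have zero speed. Here the sandwich above is not literally an equality and one needs careful upper/lower bound arguments: for the upper bound, one sums over possible values of the maximum of $X_\cdot$ on $[0,n]$ and uses that the walk cannot overshoot by too much in nearest-neighbor steps; for the lower bound, one restricts to the event that the walk hits $k$ near time $n$ and then stays in a neighborhood of $k$, using uniform ellipticity to bound the latter probability by $c^{o(n)}$. The essential smoothness of $g$ at the boundary of its effective domain also needs verification to invoke G\"artner--Ellis in full strength, and this is where the uniform ellipticity assumption is genuinely used.
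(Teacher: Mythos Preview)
The paper does not give its own proof of this statement: it is quoted in Section~5.1 as a previously known result of Comets, Gantert and Zeitouni, with a citation and no argument. What the paper \emph{does} prove in Chapter~5 is that its own rate function $I(x)=\sup_\lambda\{\lambda x-\Lambda(\lambda)\}$, obtained from the variational formula for $\Lambda$, coincides with $J(x)$. So the route the paper offers to this theorem is indirect: establish the LDP with rate $I$ via the machinery of Chapters~2--4 (the class $\admiss$, the minimax argument, the equality of upper and lower bounds), and then identify $I$ with $J$ by exhibiting explicit functions $F_g^\pm,F_h^\pm\in\admiss$ built from $\log G(\omega,r)$ and $\log H(\omega,r)$ and by using the supermartingale of Lemma~\ref{smart} together with optional stopping at $\tau_{\pm 1}$.

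Your proposal, by contrast, is essentially the original Comets--Gantert--Zeitouni argument: decompose $\tau_k$ into independent pieces via the strong Markov property, apply the ergodic theorem to $\frac{1}{k}\sum\log G(T^{i-1}\omega,r)$, invoke G\"artner--Ellis for $\tau_k/k$, and transfer to $X_n/n$ by the hitting-time/position duality. This is correct in outline and is the standard direct proof; the issues you flag (the sandwich at speeds below $v_P$, essential smoothness of $g$ at the boundary) are exactly the technical points that need care in that approach. The paper's route avoids the hitting-time LDP and the G\"artner--Ellis smoothness check entirely, at the cost of the heavier variational/minimax apparatus, but with the payoff that the same apparatus works in all dimensions. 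Your approach is shorter and more self-contained for $d=1$, but does not generalize.
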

 \begin{remark} 
 There is an analogous result for the case where the walk is transient to the left.
 \end{remark}
 \section{Our result in one dimension}
In one dimension, $U=\{-1,+1\}$. We will use the following shorthand to make the notation more transparent.
 \begin{align*}
 F(\omega,+1) &=F^{+}(\omega) \\
 F(\omega,-1) &=F^{-}(\omega)\\
 p(\omega,+1) &=p^{+}(\omega) \\
 p(\omega,-1) &=p^{-}(\omega)
 \end{align*}
 The class $\admiss$ includes those pairs of functions $F^{\pm}$ such that
 \begin{enumerate}
 \item[(i)] Moment: $F \in  L^1(\mathbb{P})$. 
  \item[(ii)] Mean Zero:  $\mathbb{E}[F^{\pm}(\omega)]=0$.
  \item[(iii)] Closed Loop: $F^{+}(\omega) + F^{-}(\mpt \omega)=0$
 \end{enumerate} 
 \begin{remark}
Notice that $F$ only needs to be in $L^1(\mathbb{P})$ as opposed to $\bigcup_{\alpha>0} L^{1+\alpha}(\mathbb{P})$.  Looking back at the parts of  the proof of Theorem \ref{a} where $L^{d+\alpha}$ was needed will show that in one dimension $L^1$ will suffice.
\end{remark}
The one dimensional version of Definition \ref{lam} and Theorems \ref{a} and \ref{b} are:
\begin{definition} 
$$
\Lambda(\lambda) := \inf_{F \in \admiss} \esup_\omega \log \left \{ p^{+}(\omega) \base^{ \lambda+F^{+}(\omega)}+p^{-}(\omega) \base^{- \lambda+F^{-}(\omega)}\right\}
$$
where the $\esup$ is with respect to the measure $\mathbb{P}$.
\end{definition}
and
\begin{theorem} \label{a1}
Suppose $\int \left |\log p^{\pm}(\omega)\right| d\mathbb{P}<\infty$. Then
\begin{equation}
\begin{split}
 \lim _{n \rightarrow \infty} \frac{1}{n} \log E^{P_\omega}\left [\base^{\lambda X_n }  \right]&=\Lambda(\lambda)
 \end{split}
 \end{equation}
\end{theorem}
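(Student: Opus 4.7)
The plan is to mirror the three-step structure used for Theorem \ref{a} — lower bound, upper bound, and equivalence — and to track exactly where the dimension assumption $d+\alpha$ enters so that the moment condition can be relaxed to $L^1$ in one dimension.

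First I would carry over the lower bound argument of Theorem \ref{lowbdd} essentially verbatim. That proof uses only a change of measure, Jensen's inequality, the one-dimensional Birkhoff ergodic theorem, and the SLLN of Theorem \ref{slln}; it does not invoke the $L^{d+\alpha}$ hypothesis. This yields $\Gamma(\lambda) \leq \liminf_n \frac{1}{n}\log E^{P_\omega}[\base^{\lambda X_n}]$ with the appropriate one-dimensional definition of $\Gamma$.

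Next I would establish $\limsup_n \frac{1}{n}\log E^{P_\omega}[\base^{\lambda X_n}] \leq \Lambda(\lambda)$ by the supermartingale argument of Lemma \ref{smart} and Theorem \ref{ub}. The only place the $L^{d+\alpha}$ condition enters that proof is in using Theorem \ref{fon} to show
\[
\Big|\textstyle\sum_{j=1}^n F(X_{j-1},X_j)\Big| \leq c_\epsilon + n\epsilon
\]
for $F \in \admiss$. In dimension one this inequality is precisely the content of Remark \ref{one}: writing $f(n) = \sum_{j=1}^n F^+(\mpt^{j-1}\omega)$ and $f(-n) = \sum_{j=1}^n F^-(\mpt^{1-j}\omega) = \sum_{j=1}^n F^-(\mpt_{-1}^{j-1}\omega)$, the mean-zero hypothesis and the usual Birkhoff theorem for the ergodic transformations $\mpt$ and $\mpt^{-1}$ force $f(n)/n \to 0$ and $f(-n)/n \to 0$, which is all we need and requires only $F^\pm \in L^1(\mathbb{P})$. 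No Garsia--Rodemich--Rumsey estimate, and hence no extra moment, is needed.

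Finally I would prove $\Lambda(\lambda) \leq \Gamma(\lambda)$ by running the Ky Fan minimax argument of Theorem \ref{uls}. The construction of the sequence $\{F_{k,\epsilon}\}$ and the bound
\[
|F_{k,\epsilon}(\omega,e)| \leq c_0 + \mathbb{E}[-\log p(\omega,e)\mid \mathcal{E}_{k-1}] + \mathbb{E}[-\log p(\omega,-e)\mid \mathcal{E}_{k-1}]
\]
go through unchanged. The main obstacle in one dimension is extracting a weakly convergent subsequence: $L^1$ is not reflexive, so we cannot directly invoke weak compactness of $L^{d+\alpha}$ balls. The substitute is the Dunford--Pettis theorem: the above bound dominates $\{F_{k,\epsilon}\}$ by an $L^1$ function, so the sequence is uniformly integrable and hence relatively weakly compact in $L^1$. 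After passing to a weakly convergent subsequence with limit $F_\epsilon$, Mazur's theorem still yields a convex-combination subsequence that converges strongly and (along a further subsequence) almost surely to $F_\epsilon + \log p$; the rest of the verification — Jensen, passage to the limit, and the checks that $F_\epsilon$ satisfies the mean-zero and closed-loop conditions (the latter being the single algebraic identity $F_\epsilon^+(\omega) + F_\epsilon^-(\mpt\omega) = 0$) — is unchanged. Combining the three steps gives $\Gamma(\lambda) = \Lambda(\lambda)$ and proves Theorem \ref{a1}.
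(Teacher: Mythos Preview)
Your proposal is correct and follows exactly the approach the paper itself indicates: the paper gives no separate proof of Theorem~\ref{a1}, only the remark that ``looking back at the parts of the proof of Theorem~\ref{a} where $L^{d+\alpha}$ was needed will show that in one dimension $L^1$ will suffice,'' and you have carried out precisely that review, correctly isolating Remark~\ref{one} as the replacement for Theorem~\ref{fon} in the upper bound and the weak-compactness step in Theorem~\ref{uls} as the only place requiring a new argument.

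One small imprecision worth tightening: the bound
\[
|F_{k,\epsilon}(\omega,e)| \leq c_0 + \mathbb{E}[-\log p(\omega,e)\mid \mathcal{E}_{k-1}] + \mathbb{E}[-\log p(\omega,-e)\mid \mathcal{E}_{k-1}]
\]
does not dominate $\{F_{k,\epsilon}\}$ by a \emph{single} $L^1$ function, since the right-hand side depends on $k$. Uniform integrability still follows, however, because the family of conditional expectations $\{\mathbb{E}[-\log p(\omega,\pm e)\mid \mathcal{E}_{k-1}]\}_k$ of a fixed $L^1$ function with respect to any collection of sub-$\sigma$-algebras is always uniformly integrable; hence so is $\{F_{k,\epsilon}\}$, and Dunford--Pettis applies as you claim. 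With that adjustment the argument is complete.
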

\begin{theorem} 
Under the assumptions of Theorem \ref{a1}, 
$X_n/n$ obeys a large deviation principle with rate function
\begin{equation}
I(x)=\sup_\lambda \left\{ \lambda x  - \Lambda(\lambda) \right \}
\end{equation}
\end{theorem}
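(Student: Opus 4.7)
The plan is to reduce the statement to the work already done for Theorem \ref{b} in the multidimensional setting, since the one-dimensional case is structurally identical once Theorem \ref{a1} is in hand. My strategy is to re-run the arguments of Theorems \ref{ldpc} and \ref{ldpo} verbatim with $d=1$, verifying only that each intermediate estimate goes through under the weaker hypothesis $\int |\log p^\pm|\, d\mathbb{P} < \infty$ rather than the $L^{d+\alpha}$ bound used in higher dimensions.

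For the upper bound, I would first observe that $|X_n|/n \leq 1$ since the walk is nearest neighbor, so it suffices to prove the bound on compact subsets of $[-1,1]$. Given a compact $K$ and $\delta > 0$, I choose at each $z \in K$ a $\lambda_z \in \mathbb{R}$ witnessing $\lambda_z z - \Lambda(\lambda_z) \geq I_\delta(z)$, cover $K$ by finitely many open intervals $D_{z_i}$ of small radius $\alpha_{z_i}$ with $\alpha_{z_i} |\lambda_{z_i}| \leq \delta$, and apply Chebyshev's inequality together with Theorem \ref{a1} on each ball. Taking the maximum over the finite cover and sending $\delta \downarrow 0$ yields the upper bound $-\inf_{x \in K} I(x)$, exactly as in the proof of Theorem \ref{ldpc}.

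For the lower bound, I localize to an open interval $D_\epsilon(x) \subset O$, perform the change of measure to a Markov chain $Q_\omega$ with transition $q$ and invariant ergodic density $\phi$, and apply Jensen's inequality to the resulting expression, exactly as in (\ref{jen}). The ergodic theorem and the one-dimensional law of large numbers for the RWRE then let me pass to the limit, producing
\[
\liminf_{n \to \infty} \frac{1}{n} \log P_\omega\!\left[\frac{X_n}{n} \in D_\epsilon(x)\right] \geq -g(x),
\]
where $g(x)$ is the infimum of $\int [q^+ \log(q^+/p^+) + q^- \log(q^-/p^-)]\phi\, d\mathbb{P}$ over pairs $(q,\phi) \in A_x$. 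I would then reuse Lemma \ref{gcon} verbatim (the convex-combination construction $(q_3,\phi_3)$ works identically in one dimension) to conclude that $g$ is convex, whence $g$ equals the Legendre conjugate of $\Lambda$ by the duality identity derived at the end of the proof of Theorem \ref{ldpo}. This gives $-g(x) = -\sup_\lambda\{\lambda x - \Lambda(\lambda)\} = -I(x)$, and taking infimum over $x \in O$ yields the open-set bound.

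The only place deserving care is confirming that every invocation of the moment hypothesis in Chapters 3 and 4 collapses to the $L^1$ condition when $d=1$. In Chapter 3 this is exactly Remark \ref{one}, where the ergodic theorem replaces Theorem \ref{fon} and requires only $F \in L^1$. In Chapter 4, the weak compactness argument producing $F_\epsilon$ uses the bound (\ref{mk2}), which in one dimension reduces to $\mathbb{E}[|F_{k,\epsilon}|] \leq c_2$; this still gives weak $L^1$ compactness via the Dunford--Pettis criterion applied to the uniformly integrable family dominated by $c_0 + \mathbb{E}[-\log p^\pm \mid \mathcal{E}_{k-1}]$. I expect this verification to be the main (though mild) obstacle; once it is in place the upper and lower bounds match and the large deviation principle follows immediately.
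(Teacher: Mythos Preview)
Your proposal is correct and follows essentially the same approach as the paper, which treats this theorem as the immediate $d=1$ specialization of Theorem~\ref{b} (proved via Theorems~\ref{ldpc} and~\ref{ldpo}) without giving a separate argument. One small clarification: the Chapter~4 proofs of the upper and lower LDP bounds use only the \emph{conclusion} of Theorem~\ref{a1} (the existence of the limit and the identity $\Lambda=\Gamma$) and Lemma~\ref{gcon}, none of which invoke the moment hypothesis directly; your Dunford--Pettis discussion is really part of verifying Theorem~\ref{a1} itself under the $L^1$ condition rather than part of deriving the LDP from it, so once Theorem~\ref{a1} is granted the LDP proof goes through verbatim with no additional checks needed.
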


 \section{Equivalence with Previous Results}
 In this section we will show that the rate functions $I(x)$ and $J(x)$ are in fact equal. In order to facilitate this it is easiest to put the rate function $I(x)$ in a slightly different form. We start with some definitions.
 \begin{definition}
 The set $A$ is made up of all pairs $(\theta, \lambda) \in \mathbb{R} \times \mathbb{R}$ such that there exists $F^{\pm} \in \admiss$ such that
 $$
 \log \{p^+\base^{\theta +F^+} + p^- \base^{-\theta + F^-}\} \leq \lambda
 $$
 \end{definition}
 \begin{definition}
 $$
 \tilde{I}(x)=\sup_{(\theta,\lambda) \in A} \{ \theta x - \lambda\}
 $$
 \end{definition}
 \begin{lemma}
 $$
  \tilde{I}(x)=I(x)
 $$
 \end{lemma}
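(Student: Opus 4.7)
The plan is to prove the equality by showing both inequalities directly from the definitions of $I$, $\tilde{I}$, $\Lambda$, and the set $A$. The underlying observation is that the set $A$ encodes exactly the pairs $(\theta,\lambda)$ for which $\Lambda(\theta) \le \lambda$, so optimizing over $A$ is an inflated version of optimizing over the graph of $\Lambda$, but the inflation does not change the Legendre transform.

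First I would prove $\tilde{I}(x) \le I(x)$. Pick any $(\theta,\lambda) \in A$ and let $F^{\pm} \in \admiss$ be the witness: $\log\{p^+\base^{\theta+F^+}+p^-\base^{-\theta+F^-}\} \le \lambda$ holds $\mathbb{P}$-almost surely. Taking the essential supremum over $\omega$ on the left preserves the inequality, so
\[
\esup_\omega \log\{p^+\base^{\theta+F^+}+p^-\base^{-\theta+F^-}\} \le \lambda.
\]
By the definition of $\Lambda$ as an infimum over $\admiss$, this gives $\Lambda(\theta) \le \lambda$. Hence
\[
\theta x - \lambda \le \theta x - \Lambda(\theta) \le \sup_{\theta'}\{\theta' x - \Lambda(\theta')\} = I(x).
\]
Taking the supremum over $(\theta,\lambda)\in A$ yields $\tilde{I}(x) \le I(x)$.

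Next I would prove the reverse inequality $I(x) \le \tilde{I}(x)$. Fix any $\theta \in \mathbb{R}$ and $\epsilon > 0$. By definition of $\Lambda(\theta)$ as an infimum, there exists $F^\pm \in \admiss$ with
\[
\esup_\omega \log\{p^+\base^{\theta+F^+}+p^-\base^{-\theta+F^-}\} \le \Lambda(\theta)+\epsilon.
\]
In particular the inequality $\log\{p^+\base^{\theta+F^+}+p^-\base^{-\theta+F^-}\} \le \Lambda(\theta)+\epsilon$ holds $\mathbb{P}$-a.s., so the pair $(\theta,\Lambda(\theta)+\epsilon)$ lies in $A$. Consequently
\[
\tilde{I}(x) \ge \theta x - (\Lambda(\theta)+\epsilon) = \theta x - \Lambda(\theta) - \epsilon.
\]
Letting $\epsilon \downarrow 0$ gives $\tilde{I}(x) \ge \theta x - \Lambda(\theta)$, and then taking the supremum over $\theta$ yields $\tilde{I}(x) \ge I(x)$.

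Combining the two inequalities completes the proof. There is no real obstacle here: the only subtlety is the consistent interpretation of the constraint defining $A$ as an almost-sure (equivalently, ess-sup) inequality, which makes both directions essentially immediate from the definitions. No appeal to minimax, compactness, or the earlier theorems is required.
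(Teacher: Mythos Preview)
Your proof is correct and follows essentially the same approach as the paper's: both directions are obtained by unwinding the definitions, using that $(\theta,\lambda)\in A$ implies $\Lambda(\theta)\le\lambda$ for one inequality, and that $(\theta,\Lambda(\theta)+\epsilon)\in A$ for the other. The only cosmetic difference is the order in which you take the supremum over $\theta$ and the limit $\epsilon\downarrow 0$ in the second half, which is immaterial.
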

 \begin{proof}
 We have $I(x)=\sup_\theta \{ \theta x - \Lambda(\lambda)\}$. Now if $(\theta, \lambda) \in A$, then there exists $F$ such that $ \log \{p^+\base^{\theta +F^+} + p^- \base^{-\theta + F^-}\} \leq \lambda $. The definition of $\Lambda$ is
 $$
 \Lambda(\theta) = \inf_{F \in \admiss} \esup_\omega \log \{p^+\base^{\theta +F^+} + p^- \base^{-\theta + F^-}\}
 $$
 that is, for all $F \in \admiss$, $\Lambda(\theta) \leq  \log \{p^+\base^{\theta +F^+} + p^- \base^{-\theta + F^-}\}$
so that for $(\theta,\lambda) \in A$ 
$$ 
\lambda \geq \Lambda(\theta)
$$ and hence
$$
\tilde{I}(x) \leq I(x)
$$
On the other hand, by the definition of $I(x)$, given $\epsilon >0$, for all $\theta$ there is an $F \in \admiss$ such that
$$
\log \{p^+\base^{\theta +F^+} + p^- \base^{-\theta + F^-}\} \leq \Lambda(\theta) + \epsilon
$$
which tells us that $(\theta, \Lambda(\theta)+\epsilon)$ is in $A$ therefore,
$$
\tilde{I}(x) \geq \sup_\theta \{ \theta x - (\Lambda(\theta)+\epsilon)) \}
$$
Letting $\epsilon$ tend to zero finishes the proof.
 \end{proof}
 We now turn to the main result of this section,
 \begin{theorem}
 I(x)=J(x)
 \end{theorem}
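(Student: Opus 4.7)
The plan is to prove $I(x) \ge J(x)$ and $I(x) \le J(x)$ separately, both by exploiting the characterization $I = \tilde I$ established in the previous lemma together with the one-dimensional Markovian recursion for the hitting-time generating function,
\[
G(\omega, r) = p^+(\omega)\base^r + p^-(\omega)\base^r\, G(\mpt_{-1}\omega, r)\, G(\omega, r),
\]
which rearranges to the pointwise identity
\[
p^+(\omega)\base^{r - \log G(\omega, r)} + p^-(\omega)\base^{r + \log G(\mpt_{-1}\omega, r)} = 1.
\]
An analogous recursion holds for $H$ with the roles of $\pm 1$ swapped.

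For the direction $I(x) \ge J(x)$, I would exhibit an explicit element of $A$ for each $r$. Define $F^+(\omega) := g(r) - \log G(\omega, r)$ and $F^-(\omega) := \log G(\mpt_{-1}\omega, r) - g(r)$. The mean-zero condition is immediate from the definition of $g(r)$, the closed-loop identity $F^+(\omega) + F^-(\mpt_1 \omega) = 0$ is a direct cancellation, and the $L^1$ integrability follows from the Markovian bounds on $\log G$. Substituting into $p^+\base^{\theta + F^+} + p^-\base^{-\theta + F^-}$ with $\theta = -g(r)$ and invoking the displayed identity yields $\base^{-r}$ identically in $\omega$. Hence $(-g(r), -r) \in A$, so $\tilde I(x) \ge r - x g(r)$, and taking the supremum over $r$ gives $\tilde I(x) \ge J(x)$ for $0 \le x \le 1$. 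The symmetric construction built from $H$ and $\mpt_1$ dispatches $-1 \le x \le 0$.

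For the direction $I(x) \le J(x)$, fix any $(\theta, \lambda) \in A$ witnessed by $F \in \admiss$. By Lemma \ref{smart} the process
\[
M_n := \exp\Big\{\theta X_n + \sum_{j=1}^n F(X_{j-1}, X_j) - n\lambda\Big\}
\]
is a $P_\omega$-supermartingale. The crucial simplification is that, by the closed-loop property, on $\{\tau_1 < \infty\}$ the path sum $\sum_{j=1}^{\tau_1} F(X_{j-1}, X_j)$ collapses to $f(1) = F^+(\omega)$ regardless of the trajectory, while $X_{\tau_1} = 1$. Applying optional stopping at $\tau_1 \wedge n$ and passing to the limit by Fatou,
\[
\base^{\theta + F^+(\omega)}\, G(\omega, -\lambda) \;=\; E^{P_\omega}\bigl[M_{\tau_1};\, \tau_1 < \infty\bigr] \;\le\; 1,
\]
so $\log G(\omega, -\lambda) \le -\theta - F^+(\omega)$; integrating against $\mathbb{P}$ and using $\mathbb{E}[F^+] = 0$ yields the scalar inequality $g(-\lambda) \le -\theta$. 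For $0 \le x \le 1$, plugging $r = -\lambda$ into $J(x) = \sup_r\{r - x g(r)\}$ gives
\[
J(x) \;\ge\; -\lambda - x g(-\lambda) \;\ge\; -\lambda + x\theta \;=\; \theta x - \lambda,
\]
where the last step uses $g(-\lambda) \le -\theta$ together with $x \ge 0$. Taking the supremum over $(\theta, \lambda) \in A$ gives $J(x) \ge \tilde I(x)$. The symmetric argument with $\tau_{-1}$, $H$, and $h$ handles $-1 \le x \le 0$.

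The main obstacle I anticipate is technical rather than conceptual. The optional-stopping step must be handled with care, since in the one-sided transient setting $\tau_{\pm 1}$ may be infinite with positive probability; a truncation-plus-Fatou argument (using that $M_n \to 0$ on the non-hitting event when $\lambda$ dominates the exponential growth of the path terms) should suffice, though one may need to restrict to a dense set of $\lambda$ to guarantee the decay. A secondary subtlety is ensuring $\log G(\cdot, r) \in L^1(\mathbb{P})$ in the construction of $F^\pm$, which requires two-sided control of $G$ uniform in $\omega$; the uniform ellipticity that is implicit in the CGZ comparison setting furnishes exactly what is needed.
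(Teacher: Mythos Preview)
Your proposal is correct and follows essentially the same route as the paper: exhibit $(-g(r),-r)\in A$ (and $(h(r),-r)\in A$) via the hitting-time recursion for $G$ and $H$ to get $I\ge J$, and use the supermartingale $R_n$ of Lemma~\ref{smart} together with optional stopping at $\tau_{\pm 1}$ and the closed-loop collapse of the path sum to deduce $\theta\le -g(-\lambda)$ (resp.\ $\theta\ge h(-\lambda)$), yielding $I\le J$. The only difference is that you make explicit the truncation/Fatou justification for optional stopping and the $L^1$ check for $\log G$, points the paper passes over in a single line by invoking positivity of the supermartingale.
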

 \begin{proof}
 The first step is to find two examples of  members of the set $A$ in terms of the functions $G(\omega,r)$ and $H(\omega,r)$. We decompose according to the first step of the random walk to observe that $G(\omega,r)$ satisfies,
\begin{align} 
G(\omega,r)&=p^+ \base^r+p^- \base^r G(\mpt^{-1} \omega,r) G(\omega,r) \\
\base^{-r} &= p^+ \base^{-\log G(\omega,r)}+p^- \base^{\log G(\mpt^{-1} \omega,r)} \label{r}
\end{align}
We choose,
\begin{align*}
F_g^+ &= -\log G(\omega,r)-\theta_g \\
F_g^- &= \log G(\mpt^{-1} \omega,r) + \theta_g\\
\theta_g &= -g(r) \\
\lambda &= -r
\end{align*}
One easily checks that $F_g \in \admiss$ and that
$$
\log \{p^+\base^{\theta_g +F_g^+} + p^- \base^{-\theta_g + F_g^-}\} =-r
$$ 
so that indeed
\begin{equation}
(-g(r),-r) \in A \label{gA}
\end{equation}
Following the same procedure using the function $H$ we see that
$$
\base^{-r} = p^+ \base^{\log H(\mpt \omega,r)}+p^- \base^{-\log H( \omega,r)}
$$
Now we choose,
\begin{align*}
F_h^+ &= \log H(\mpt \omega,r)-\theta_h \\
F_h^- &= -\log H( \omega,r) + \theta_h\\
\theta_h &= h(r) \\
\lambda &= -r
\end{align*}
and again $F_h \in \admiss$ and
\begin{equation}
(h(r),-r) \in A
\end{equation}
Having found the two elements of $A$ that we require, the next step in the proof is to utilize the defining property of set $A$ to derive a relationship between $\theta, \lambda$ and $g$. Recall that $F(x,x+e)$ is defined to be equal to $F(\mpt_x \omega,e)$, which in our one dimensional case states that $F(x,x+1)=F^+(\mpt_x \omega)$ and $F(x,x-1) = F^-(\mpt_x \omega)$.
Set
$$
R_n = \exp \left \{\theta X_n + \sum_{j=1}^n F(X_{n-1},X_n)-n\lambda \right \}
$$
If $(\theta, \lambda) \in A$ then according to Lemma \ref{smart}, $\{R_n\}$ is a supermartingale with respect to $\sigma(X_0,X_1, \ldots X_n)$ and $R_0=1$. Hence, $E^{P_\omega}[R_n] \leq 1$ and in particular, since $\{R_n\}$ is a positive supermartingale the stopping theorem gives,
\begin{align*}
E^{P_\omega}[R_{\tau_1}1_{\{\tau_1 < \infty\}}]&\leq 1\\
\intertext{and since on $\{\omega: \tau_1 < \infty\}$ we have $R_{\tau_1}=\exp\{\theta +F^+(\omega) -\lambda \tau_1\}$ we have}
E^{P_\omega}[\exp\{\theta +F^+(\omega) -\lambda \tau_1\}1_{\{\tau_1 < \infty\}}]] & \leq 1  \\
\intertext{ so that}
E^{P_\omega}[\base^{-\lambda \tau_1}1_{\{\tau_1 < \infty\}}]] &\leq \base^{-\theta-F^+(\omega)}  \\
\intertext{Taking the logarithm followed by the expectation with respect to the environment gives,}
\mathbb{E}[\log E^{P_\omega}[\base^{-\lambda \tau_1}]1_{\{\tau_1 < \infty\}}]] & \leq \mathbb{E}[-\theta-F^+(\omega)] \\
\intertext{in other words,}
\theta &\leq -g(-\lambda)\\
\intertext{A similar argument will give us,}
\theta &\geq h(-\lambda)
\end{align*}
We deal first with the case when the argument of the rate function is positive, that is $0 \leq x \leq 1$. Since (\ref{gA}) gives us $(-g(\lambda),-\lambda) \in A$ we obtain,
\begin{align*}
I(x)=\tilde{I}(x) &\geq \sup_\lambda \{-g(\lambda)x + \lambda\}=J(x)
\end{align*}
On the other hand, since $-g(-\lambda) \geq \theta$, for $(\theta, \lambda) \in A$ and $x \geq 0$,
\begin{align*}
I(x) &\leq \sup_\lambda \{-g(-\lambda) x -\lambda\} \\
 & = \sup_\lambda \{-g(\lambda) x+ \lambda\} = J(x)
\end{align*}
For the case $-1 \leq x \leq 0$, since $(h(\lambda),-\lambda) \in A$
$$
I(x) \geq \sup_\lambda \{ h(\lambda)x + \lambda\}=J(x)
$$
and since $h(-\lambda) \leq \theta$, and $x \leq 0$,
\begin{align*}
I(x) &\leq \sup_\lambda \{ h(-\lambda) x - \lambda\}\\
& = \sup_\lambda \{ h(\lambda) x + \lambda\} = J(x)
\end{align*}
\end{proof}

%\include{conclusion}

%%-------------------------------------Appendix or Appendices

\appendix

\appheading{A Calculus Result}
We solve the following maximization problem.
\begin{align} \label{obj}
\sup_{q \in B_k} \Big \{\sum_{e \in U} \Bigl (\lb \lambda, e \rb  - \log q(e)  +  \mathbb{E}\bigl [ \log p(e)  + h- \mpt_e h|\mathcal{E}_{k}\bigr] \Bigl) q(e)\Big\}
\end{align}
Where $h$ is a bounded measurable function and $\mathcal{E}_k$ and $B_k$ are defined as in Chapter 3. Set $\nu(e)=\mathbb{E}\bigl [ \log p(e)  + h- \mpt_e h|\mathcal{E}_{k}\bigr] $. Since $q(e)$ and $\nu(e)$ are constant on the atoms of $\mathcal{E}_k$, we can treat $q$ and $\nu$ as constant functions of $\omega$. We calculate the first and second order conditions.
\begin{align*}
\frac{\partial}{\partial q(e)} \Big \{\sum_{e \in U} \Bigl (\lb \lambda, e \rb  - \log q(e)  +  \nu(e) \Bigl) q(e)\Big\}&=(\lb \lambda, e \rb-\log q(e) -1\\
\frac{\partial^2}{\partial q(e)^2} \Big \{\sum_{e \in U} \Bigl (\lb \lambda, e \rb  - \log q(e)  +  \nu(e) \Bigl) q(e)\Big\}&=-\frac{1}{q(e)}
\end{align*}
Since $q(e)>0$ the second order conditions will guarantee that we indeed have a maximum. Setting the first order conditions to zero implies that $q(e)$ is proportional to
$$
\exp(\lb \lambda, e \rb- \nu(e)-1)
$$
and since 
$$
\sum_{e \in U} q(e) =1
$$
The optimum of (\ref{obj}) is attained at 
$$
q^*(e)=\frac{\exp(\lb \lambda, e \rb- \nu(e))}{\sum_{e \in U}\exp(\lb \lambda, e \rb- \nu(e))}
$$
Substituting this value back into the objective function (\ref{obj}) we obtain
\begin{align*}
\sup_{q \in B_k} \Big \{\sum_{e \in U} \Bigl (\lb \lambda, e \rb  -& \log q(e)  +  \mathbb{E}\bigl [ \log p(e)  + h- \mpt_e h|\mathcal{E}_{k}\bigr] \Bigl) q(e)\Big\}\\
&=\log \sum_{e \in U} \exp(\lb \lambda, e \rb+ \nu(e))\\
&=\log \sum_{e \in U} \exp\Bigr(\lb \lambda, e \rb +  \mathbb{E}\bigl [ \log p(e)  + h- \mpt_e h|\mathcal{E}_{k}\bigr] \Bigl)
\end{align*}
%\include{appendixB}

%%----------------------------BIBLIOGRAPHY

\newpage

\addcontentsline{toc}{chapter}{Bibliography}
\nocite{*}
\bibliography{main}
\end{document}